\DeclareMathAlphabet\EuFrak{U}{euf}{m}{n}	
\SetMathAlphabet\EuFrak{bold}{U}{euf}{b}{n}	
\newcommand{\Ra}{\Rightarrow}
\newcommand{\hra}{\hookrightarrow}
\newcommand{\ovl}{\overline}
\newcommand{\unl}{\underline}
\newcommand{\wa}{\widehat}
\newcommand{\wt}{\widetilde}
\newcommand{\sC}{{\it C*}-}
\newcommand{\bC} {{\mathbb C}}
\newcommand{\bR} {{\mathbb R}}
\newcommand{\bT} {{\mathbb T}}
\newcommand{\bPU}{{\mathbb{PU}}}
\newcommand{\bZ} {{\mathbb Z}}
\newcommand{\bN} {{\mathbb N}}
\newcommand{\bQ} {{\mathbb Q}}
\newcommand{\bP} {{\mathbb P}}
\newcommand{\bK} {{\mathbb K}}
\newcommand{\ud}{{{\mathbb U}(d)}}
\newcommand{\sud}{{{\mathbb {SU}}(d)}}
\newcommand{\eps}{\epsilon}
\newcommand{\vareps}{\xi}
\newcommand{\mB}{\mathcal B}
\newcommand{\mE}{\mathcal E}
\newcommand{\mG}{\mathcal G}
\newcommand{\mO}{\mathcal O}
\newcommand{\mP}{\mathcal P}
\newcommand{\efA}{\EuFrak{A}}
\newcommand{\efB}{\EuFrak{B}}
\newcommand{\efC}{\EuFrak{C}}
\newcommand{\efF}{\EuFrak{F}}
\newcommand{\efG}{\EuFrak{G}}
\newcommand{\efH}{\EuFrak{H}}
\newcommand{\efP}{\EuFrak{P}}
\newcommand{\efR}{\EuFrak{R}}
\newcommand{\efS}{\EuFrak{S}}
\newcommand{\efT}{\EuFrak{T}}
\newcommand{\efc}{\EuFrak{c}}
\newcommand{\bo}{{\partial_0 b}}
\newcommand{\bl}{{\partial_1 b}}
\newcommand{\ad}{{\mathrm{ad}}}
\newcommand{\rs}{{\rho,\sigma}}
\newcommand{\sr}{{\sigma,\rho}}
\newcommand{\Calg}{{\bf C^*alg}}
\newcommand{\Ccat}{{\bf C^*cat}}
\newcommand{\obj}{{\bf obj \ }}
\newtheorem{thm}{Theorem}[section]
\newtheorem{cor}[thm]{Corollary}
\newtheorem{lem}[thm]{Lemma}
\newtheorem{prop}[thm]{Proposition}
\newtheorem{defn}[thm]{Definition}
\newtheorem{rem}[thm]{Remark}
\theoremstyle{definition}
\newtheorem{ex}{Example}[section]
\theoremstyle{remark}
\numberwithin{equation}{section}
\begin{document}

\author{{\sf Ezio Vasselli}
                         \\{\it \small{Dipartimento di Matematica University of Rome La Sapienza}}
                         \\{\sf ezio.vasselli@gmail.com}}

\title{Presheaves of symmetric tensor categories\\and\\nets of ${\mathrm{C}}^*$-algebras}
\maketitle

\begin{abstract}
Motivated by algebraic quantum field theory, we study presheaves of symmetric tensor categories
defined over the base of a space, intended as a spacetime.
Any section of a presheaf (that is, any "superselection sector", in the applications that we have in mind) 
defines a holonomy representation whose triviality is measured by Cheeger-Chern-Simons characteristic classes,
and a non-abelian unitary cocycle defining a Lie group gerbe.
We show that, given an embedding in a presheaf of full subcategories of the one of Hilbert spaces, 
the section category of a presheaf is a Tannaka-type dual of a locally constant group bundle (the "gauge group"), 
which may not exist and in general is not unique.
This leads to the notion of gerbe of \sC algebras, defined on the given base.
\end{abstract}

\tableofcontents
\markboth{Contents}{Contents}

\newpage
\section{Introduction.}
\label{intro}

Let $M$ be a spacetime and $\Delta$ denote a base generating the topology of $M$. 
The object of interest in algebraic quantum field theory is a \sC precosheaf $\efA$ defined over $\Delta$,
that is, a family of \sC algebras $\{ A_o \}_{o \in \Delta}$ with *-monomorphisms
\begin{equation}
\label{eq.00}
\jmath_{o'o} : A_o \to A_{o'}
\ \ : \ \
\jmath_{o''o'} \circ \jmath_{o'o} = \jmath_{o''o}
\ \ , \ \
\forall o \subseteq o' \subseteq o'' \in \Delta
\ ,
\end{equation}
where each \emph{fibre} $A_o$ is interpreted as the algebra of quantum observables localized within $o$.
This is what is usually called the \emph{observable net} (\cite[Chap.III]{Haa}).

\smallskip

In the case of the Minkowski spacetime $\Delta$ is a directed \emph{poset} (partially ordered set) 
when ordered under inclusion, so we can define the inductive limit
$\vec{A} := \lim_{o \subseteq o'} ( A_o , \jmath_{o'o} )$.
This is an important property at the mathematical level, because it implies that the set of \emph{sectors}
(the physically relevant Hilbert space representations of $\efA$) can be realized as a semigroup $T$
of *-endomorphisms of $\vec{A}$ defining a symmetric tensor category with simple unit. 
This allows one to reconstruct the \sC algebra of quantum fields, as a \sC crossed product 
\begin{equation}
\label{eq.01}
F := \vec{A} \rtimes T \ ,
\end{equation}
and the gauge group $G$, realized as the group of automorphisms of $F$ leaving $\vec{A}$ pointwise fixed.
Then a net $\efF$ of \sC subalgebras of $F$ is constructed and interpreted as the net of
(not necessarily observable) quantum fields. The pair $(\efF,G)$ is unique up to a suitable equivalence
and $T$ is characterized as the category of unitary representations of $G$ (\cite{DR89A,DR90}).

A class of "universal models" of (\ref{eq.01}) is given by the representation of the Cuntz algebra 
$\mO_d$, $d \in \bN$ (\cite{Cun77}), as the crossed product
\begin{equation}
\label{eq.01b}
\mO_d = \mO_G \rtimes \wa \rho \ .
\end{equation}
In the previous expression $\mO_G \subset \mO_d$ is the fixed-point \sC algebra under the action of the compact group 
$G$, and $\wa \rho$ is the semigroup generated by the canonical endomorphism 
$\rho \in {\bf end}\mO_G$ carrying the structure of a symmetric tensor category, see \cite{DR87} and \cite[Theorem 4.17]{DR89}. 
The relation with (\ref{eq.01}) is given by the fact that for any $\tau \in T$ 
there are mutually orthogonal partial isometries $\psi_1 , \ldots , \psi_d \in F$,
that induce $\tau$ on $\vec{A}$ in the sense that
\[
\tau(t) \ = \ \sum_k \psi_k t \psi_k^* \ \ , \ \ \forall t \in \vec{A} \ .
\]
The \sC algebra $F_\tau \subseteq F$ generated by $\psi_1 , \ldots , \psi_d$
contains the \sC algebra $\mO_\tau \subseteq \vec{A}$ generated by the elements of $\vec{A}$ that intertwine
powers of $\tau$, and there is an isomorphism $i : \mO_d \to F_\tau$ such that 
$i(\mO_G) = \mO_\tau$ and $i \circ \rho (t) = \tau \circ i (t)$, $\forall t \in \mO_G$.

\

Now, in conformal theory and general relativity we have spacetimes whose bases are not directed under inclusion.
This fact has consequences both at the level of the \sC precosheaf $\efA$, on which the operation of inductive limit now is not defined,
and on the structure of the set of sectors. 

A method to overcome the problem of the inductive limit has been given by Fredenhagen, 
who defined a \sC algebra $\wt A$ fulfilling the universal property of lifting Hilbert space representations of $\efA$ (see \cite{Fre}). 
This construction has two limits: the first is that $\wt A$ does not contain the informations necessary to describe the sectors affected 
by the topology of $M$ introduced by Brunetti and Ruzzi (\cite{BR08}), and the second is that in general it is not ensured that a 
\sC precosheaf can be faithfully represented on a Hilbert space (\cite[Example 5.8, Example A.9]{RV11}).

Passing to the structure of sectors, one can define semigroups 
$T_a \subseteq {\bf end}A_a$, $a \in \Delta$,
with elements *-endomorphisms \emph{localized} in some $e \subset a$ 
(see Example \ref{ex.Rob}, \cite[\S 27]{Rob}, \cite[\S 3.3]{GLRV01}).
These semigroups form a precosheaf $\efT$ encoding the tensor structure but that, unfortunately, 
does not contain all the informations necessary to characterize the set of sectors (see \cite[\S 3.3]{GLRV01}, \cite[\S 5.1]{BR08}).
%
%
A different approach is proposed in the preprint \cite{VasQFT}, 
where we show that the Brunetti-Ruzzi superselection structure $S$ is the category of sections of 
a \emph{presheaf} $\efS$ of symmetric tensor categories with simple units. 
The idea is that any fibre $S_a$ of $\efS$, $a \in \Delta$, can be interpreted as a superselection structure defined on $A_a$,
which, unlike $S$, is not affected by topological effects.
Our program is to construct the field algebras as suitable crossed products 
$F_a = A_a \rtimes S_a$, $a \in \Delta$,
and then to construct structure morphisms of the type (\ref{eq.00}) for the field algebras.
This would yield the reconstruction of quantum fields, that is currently an open problem (\cite[\S 8]{BR08}).

\

In the present paper we make the first steps in this direction at the mathematical level. 
The first is the analysis of the presheaf $\efS$ from the point of view of the Tannaka duality, 
that allows us to understand which are the natural dual objects defined by $\efS$
{\footnote{
Passing to the dual poset $\Delta'$ we can regard presheaves over $\Delta$ as precosheaves over $\Delta'$ and vice versa, 
so our results apply to precosheaves of symmetric tensor categories. 
%
%
}}.
The second step is the study of the \sC net bundles
{\footnote{A \emph{\sC net bundle} is a \sC precosheaf such that the structure morphisms (\ref{eq.00}) are isomorphisms, see \cite{RRV07}.}}
defined by sections of $\efS$, that play a role analogous to the above-mentioned \sC algebra $\mO_\tau$:
in particular, we study a version of (\ref{eq.01b}) in the setting of \sC net bundles.

\smallskip

For what concerns the Tannaka duality, as a dual object we obtain (quite naturally)
a precosheaf of compact groups $\efG$ acting on precosheaves of finite-dimensional Hilbert spaces, see Theorem \ref{thm.dual}.
Along this way, we prove that any section $\varrho$ of $\efS$ (that is, any Brunetti-Ruzzi sector in accord with \cite{VasQFT}) 
defines a compact Lie group 
$G_\varrho \subseteq \ud$, $d \in \bN$, 
and a holonomy representation 
\begin{equation}
\label{eq.02}
\chi_\varrho : \pi_1(M) \to NG_\varrho / G_\varrho \ ,
\end{equation}
where $NG_\varrho$ is the normalizer of $G_\varrho$ in $\ud$.
The morphism (\ref{eq.02}) is a complete invariant of the presheaf defined by tensor powers of $\varrho$ (Theorem \ref{thm.hol}). 
We interpret $\chi_\varrho$ as a flat principal $NG_\varrho / G_\varrho$-bundle
and assign to it Cheeger-Chern-Simons classes living in the odd cohomology of $M$ (see (\ref{eq.ccs})).
We point out that our duality is a \emph{Tannaka} duality, based on the use of an embedding
$I : \efS \to \efC$,
where $\efC$ is a presheaf of full subcategories of the one of Hilbert spaces.
Existence and uniqueness of $I$ and, consequently, of $\efG$ can be expressed in terms of lifts 
\[
\wt \chi_\varrho : \pi_1(M) \to NG_\varrho \ \ , \ \ \chi_\varrho = \wt \chi_\varrho \, {\mathrm{mod}} G_\varrho \ ,
\]
and therefore are not ensured, see Theorem \ref{cor.lift}.
As we shall see in a future paper, existence and uniqueness of the dual object can be restored
by considering \emph{gerbes over posets} in the sense of \S \ref{sec.concl},
that is, families of group isomorphisms fulfilling the precosheaf relations up to inner automorphisms.
This situation is analogous to the one of \cite{Vas09}, where similar results are proven for 
bundles of symmetric tensor \sC categories, nevertheless motivations, techniques and results diverge,
because now our scenario is the "geometry of posets" of \cite{RRV07}.

\smallskip

About the analogue of (\ref{eq.01b}), we observe in Remark \ref{rem_CCS00} that any section $\varrho$ of $\efS$ 
defines a \sC net bundle $\efA_\varrho$ whose fibres are constructed in the same way as $\mO_\tau$.
This leads us to study \sC net bundles $\efA$ with fibre $\mO_G$, and we classify them in terms of holonomy representations
$\chi : \pi_1(M) \to NG / G$,
see Theorem \ref{thm_OG}.
Any $\efA$ is endowed with a *-endomorphism $\varrho_* \in {\bf end}\efA$ defining 
a presheaf $\wa \varrho_*$ with fibres isomorphic to $\wa \rho$, 
and we show that solutions of the problem of finding a \sC net bundle $\efF$ with fibre $\mO_d$,
that plays the role of a crossed product
\begin{equation}
\label{eq.03}
\efF = \efA \rtimes \wa \varrho_* \ ,
\end{equation}
are in one-to-one correspondence with lifts of the holonomy $\chi$ (Theorem \ref{thm_FL08}); 
as desired, any solution $\efF$ determines a group precosheaf $\efG$ such that $\efF^\efG = \efA$.
Examples in which the pair $(\efF,\efG)$ does not exist or is not unique are given in \S \ref{sec_end}.
We shall show in a future work that (\ref{eq.03}) has always a unique solution
searching $\efF$ in the more general category of \emph{gerbes} of \sC algebras in the sense of (\ref{eq.CON3}),
and this will determine a gerbe of compact groups rather than a group precosheaf (\cite{VasX}).

\

The following sections are organized as follows.

\smallskip

In Section \ref{sec_nets} we recall the notions of precosheaf (net bundle) and presheaf (presheaf bundle)
over a poset $K$ with fibres in a category ${\bf C}$ 
(see \cite{RRV07}: as for \sC precosheaves, the term \emph{bundle} indicates that the structure morphisms are isomorphisms).
We illustrate the equivalence between the category of net bundles with fibres in ${\bf C}$ 
and the one of representations on ${\bf C}$ of the homotopy group $\pi_1(K)$,
realized by assigning to the given net bundle its holonomy representation (Theorem \ref{thm_NET00}). 
Then we introduce the notion of \emph{gauge action} of a group net bundle $\efG$ on a precosheaf, see (\ref{eq_GA00}).
Finally, we consider presheaves of categories and prove that for any presheaf $\efS$ there is a canonical presheaf bundle
with category of sections isomorphic to the one of $\efS$ (Prop.\ref{prop.NET1}).

In Section \ref{sec_end} we give the above-mentioned classification of \sC net bundles $\efA$ with fibre $\mO_G$
and characterize those that admit solutions $\efF$ of (\ref{eq.03}). 
These results are also stated in terms of duality between presheaves of symmetric tensor categories and group net bundles.

In Section \ref{sec_CCS} we prove a Tannaka duality for the category of sections of $\efS$ (Theorem \ref{thm.dual}). 
Other results are those concerning the holonomy representation (\ref{eq.02}), see Theorem \ref{thm.hol}, and 
the non-abelian cocycle defined by a section of $\efS$ (Theorem \ref{thm.gerbe}).

\smallskip

The appendix contains an introduction to flat and locally constant bundles with fibres in the category ${\bf C}$, 
that form a non-full subcategory, denoted by ${\bf lc}(M,{\bf C})$, of the one of locally trivial bundles on $M$ with fibres in ${\bf C}$. 
Making use of the holonomy representation, we illustrate how ${\bf lc}(M,{\bf C})$ is equivalent to 
the category of net bundles over $\Delta$ with fibres in ${\bf C}$.

\section{Notation and background.}
\label{sec_back}

In the present paper we shall use notions arising from contexts as algebraic topology, operator algebras and category theory, so to facilitate the reader we give some definitions.

\begin{itemize}
\item ${\bf aut}G$ is the automorphism group of the group $G$ and $\ad g \in {\bf aut}G$ is the inner automorphism induced by $g \in G$;
\item If $K$ is a set of indices and $B := \{ B_o \}_{o \in K}$, $B' := \{ B'_o \}_{o \in K}$ are families of sets, then we define the \emph{fibred product}
$B \times_K B' := \{ B_o \times B'_o \}_{o \in K}$;
\end{itemize}

\noindent {\bf Spaces.} With the term \emph{space} we mean a topological space. In particular, we denote an arcwise connected, locally compact, Hausdorff space by $M$ (that will play the role of a spacetime in the applications). We say that a base $\Delta$ generating the topology of $M$ is \emph{good} whenever any element of $K$ is arcwise and simply connected.

\

\noindent {\bf Operator algebras and (pointed) dynamical systems (\cite{Dix}).} 
A \sC algebra is a complex Banach *-algebra $A$ such that the \sC identity 
$\| v^*v \| = \| v \|^2$
is fulfilled for all $v \in A$. In the present paper we shall work with unital \sC algebras, 
that is, there is $1 \in A$ such that $1v=v1=v$, $\forall v \in A$, and we have the group of unitaries
$UA := \{ u \in A : uu^*=u^*u=1 \}$.
Any \sC algebra can be represented as a norm-closed *-algebra of operators on a complex Hilbert space.

A $*$-morphism $\phi : A \to A'$ is a linear map such that
$\phi(v^*w)=\phi(v)^* \phi(w)$, $\forall v,w \in A$.
%
The terms $*$-endomorphism, $*$-automorphism shall be used with the obvious meaning.
%
%
We shall consider only \emph{unital} *-morphisms, i.e. such that $\phi(1) = 1$.

A pair $(A,\rho)$, where $A$ is a \sC algebra and $\rho \in {\bf end}A$ a *-endomorphism,
is called \emph{\sC dynamical system}. 
A morphism $\phi : (A,\rho) \to (A',\rho')$ is a $*$-morphism $\phi : A \to A'$ 
fulfilling the relation $\rho' \circ \phi = \phi \circ \rho$. 
A \emph{pointed} \sC dynamical system is a triple 
$(A,\rho,\eps) \equiv A_{\rho,\eps}$, $\rho \in {\bf end}A$, $\eps \in A$; 
a morphism of pointed \sC dynamical systems $\phi : A_{\rho,\eps} \to A'_{\rho',\eps'}$ 
is a morphism $\phi : (A,\rho) \to (A',\rho')$ such that $\phi(\eps) = \eps'$. 
The automorphism groups of $(A,\rho)$ and $(A,\rho,\eps)$ are denoted by
${\bf aut}_\rho A$, ${\bf aut}_{\rho,\eps}A$, 
respectively.
We shall use the notion of pointed \sC dynamical system having in mind the case in which 
$\eps$ is a symmetry operator in the sense of \cite[\S 4]{DR89A}.


\

\noindent {\bf Categories.} We refer to \cite{ML} for notions of category theory,
and to \cite[\S 1]{DR89} for the notions of \sC category, direct sums and subobjects.
Given a category ${\bf C}$, we denote the set of objects by $\obj {\bf C}$ and the set of arrows by ${\bf arr} \ {\bf C}$. 
For any $\tau \in \obj {\bf C}$, we denote the group of invertible arrows in $(\tau,\tau)$ by ${\bf aut}\tau$ and the identity arrow by $1_\tau \in {\bf aut}\tau$.

For the notion of \emph{functor} 
$F : {\bf C} \to {\bf D}$
we refer to \cite[\S I.3]{ML}, from which the following definitions are taken.
The functor $F$ is said to be an \emph{isomorphism} whenever it is bijective both on the set of objects and on the sets of arrows, so it has an inverse functor,
and, in particular, we say that $F$ is an \emph{automorphism} whenever ${\bf C} = {\bf D}$.
We say that $F$ is an \emph{embedding} whenever it is injective on the sets of arrows, and that $F$ is \emph{full} whenever it is surjective on the sets of arrows.
Finally, $F$ is said to be an \emph{equivalence} whenever there is $F' : {\bf D} \to {\bf C}$ such that 
$F \circ F' \simeq id_{\bf D}$ and $F' \circ F \simeq id_{\bf C}$,
where $id_{\bf C}$ is the identity functor and $\simeq$ denotes a natural isomorphism (see \cite[\S I.4]{ML}).

Many categories that are considered in the present paper are \emph{topological}, that is: 
(1) Any $X \in \obj {\bf C}$ is a space; 
(2) Any set of arrows $(X,X')$ has elements continuous maps from $X$ to $X'$ and is endowed with a suitable topology; 
(3) The composition defines continuous maps on the sets of arrows.

The cases of interest are the following: 
(1) The category ${\bf Ban}$ with objects Banach spaces and arrows bounded linear operators, endowed with the norm topology on the objects and with the pointwise convergence topology on the sets of arrows. If $X \in \obj {\bf Ban}$, then ${\bf aut}X$ is the group of invertible linear operators;
(2) The category $\Calg$ with objects \sC algebras and arrows $*$-morphisms, with the topologies of ${\bf Ban}$. If $A \in \obj \Calg$, then ${\bf aut}A$ is the *-automorphism group;
(3) The category ${\bf Hilb}$ with objects Hilbert spaces and arrows bounded linear operators, with the topologies of ${\bf Ban}$. If $H \in \obj {\bf Hilb}$, then by definition ${\bf aut}H := UH$, the unitary group; 
(4) The category ${\bf TopGr}$ with objects topological groups and arrows continuous maps, endowed with the pointwise convergence topology on the sets of arrows. If $G \in \obj {\bf TopGr}$, then ${\bf homeo}G$ is the homeomorphism group.

\

\noindent {\bf Symmetric tensor categories.} Categories appear in of algebraic quantum field theory for questions related to the gauge group, under the form of superselection structures (\cite{DR90}). In the present paper we use \emph{tensor \sC categories} $T$, that means that the spaces of arrows $(\rs)$, $\rs \in \obj T$, are Banach spaces with an involutive structure $* : (\rs) \to (\sr)$ satisfying the \sC identity $\| t^* \circ t \| = \| t \|^2$, $\forall t \in (\rs)$, and endowed with a *-bifunctor 
$\otimes : T \times T \to T$, 
the tensor product, having identity object $\iota \in \obj T$;
to be concise, we write
$\rho \sigma := \rho \otimes \sigma$ for any $\rho , \sigma \in \obj T$.
We say that $T$ \emph{has a simple unit} whenever $(\iota,\iota) \simeq \bC$.
A further structure is the one of \emph{symmetry}, that is defined by a family of unitary arrows 
$\eps = \{ \eps_\rs  \in (\rho \sigma , \sigma \rho) \}$,
%
%
fulfilling
\begin{equation}
\label{eq.eps1}
(t' \otimes t) \circ \eps_{\rho,\rho'} = \eps_{\sigma,\sigma'} \circ (t \otimes t')
\ \ , \ \
\forall t \in (\rs) \ , \ t' \in (\rho',\sigma') \ ,
\end{equation}
and 
\begin{equation}
\label{eq.eps2}
\eps_{\rho,\sigma} \circ \eps_{\sigma,\rho} = 1_\rho
\ \ , \ \
\eps_{\iota,\rho} = \eps_{\rho,\iota} = 1_\rho
\ \ , \ \
\eps_{\rho \sigma,\tau} = (\eps_{\rho,\tau} \otimes 1_\sigma) \circ (1_\rho \otimes \eps_{\sigma,\tau})
\ ,
\end{equation}
for all $\rho,\sigma,\tau \in \obj T$.
We denote a symmetric tensor \sC category by $T_{\otimes,\eps}$, and we write 
${\bf aut}T_{\otimes,\eps}$ 
to indicate the group of automorphisms preserving tensor product and symmetry.

The category ${\bf Hilb}$, duals of compact groups (\cite[Example 1.2]{DR89}) and symmetric $*$-endomorphisms 
(\cite[\S 5]{DR89A}) are the models that we have in mind for symmetric tensor \sC categories. 
For convenience we recall the definition of these last ones: given the $C^*$-algebra $A$,
${\bf end}A$ becomes a \sC category with sets of arrows given by the Banach spaces
\begin{equation}
\label{eq.intro0}
(\rho,\sigma) := \{ t \in A \ : \ t \rho(v) = \sigma(v)t \ , \ \forall v \in A \}
\ \ , \ \ 
\forall \rho,\sigma \in {\bf end}A
\ ,
\end{equation}
and composition of arrows defined by the product of $A$; the tensor product is given by
\begin{equation}
\label{eq.intro1}
\rho \otimes \sigma := \rho \sigma := \rho \circ \sigma
\ \ , \ \
t \otimes t' := t \rho(t')
\ \ , \ \
\forall \rho,\sigma,\rho',\sigma' \in {\bf end}A
\ , \ 
t \in (\rho,\sigma) \ , \  t' \in (\rho',\sigma')
\ .
\end{equation}
If a tensor full subcategory of ${\bf end}A$ has symmetry $\eps$, 
then applying (\ref{eq.eps1},\ref{eq.eps2}) to this context we obtain the notion of \emph{permutation symmetry}
for a semigroup of *-endomorphisms of $A$ (see \cite[\S 5]{DR89A}).

\

\noindent {\bf Posets} are sets endowed with an order relation $\leq$. 
Sometimes it is convenient to regard a poset $K$ as the category
with objects elements of $K$ and sets of arrows defined by the order relation:
$(o,o')$, for all $o,o' \in K$, has one arrow when $o \leq o'$, otherwise it is the empty set.
In the present paper, the motivation of the notion of poset is given by good bases of manifolds endowed with the inclusion relation.

As in \cite{RR06,RRV07} we consider the simplicial set $\Sigma_*$ associated with $K$, which, at lower degrees, is defined as follows.
As a first step, we define the set of "points" $\Sigma_0(K) := K$.
The set $\Sigma_1(K)$ of $1$--simplices is given by triples 
\[
b := (\bo,\bl;|b|)
\ \ : \ \ 
\bo , \bl , |b| \in \Sigma_0(K) \  ,\ \bo,\bl \leq |b| \ .
\]
The $0$--simplices $\bo,\bl$ are called \emph{the faces} of $b$, whilst $|b|$ is called \emph{the support}, 
with the idea that $b$ is a "line" from $\bl$ to $\bo$.
%
%
The set $\Sigma_2(K)$ is given by quadruples $c := (\partial_0c,\partial_1c,\partial_2c,|c|)$ where each $\partial_kc$, $k=0,1,2$, is a $1$-simplex with $|\partial_k c| \leq |c|$ and such that 
\begin{equation}
\label{eq.dd}
\partial_{hk}c := \partial_h \partial_k c = \partial_k \partial_{h+1}c
\ \ , \ \
\forall h \geq k
\ .
\end{equation}
A path in $K$ is a finite sequence 
\[
\gamma = (b_n , \ldots , b_1)
\ \ : \ \ 
b_i \in \Sigma_1(K) \ , \ \partial_0b_i = \partial_1b_{i+1} \ , \ \forall i = 1 \ldots , n \ ;
\]
we define the ending and the starting points of $\gamma$ by
$\partial_0 \gamma := \bo_n$, $\partial_1 \gamma = \bl_1$,
respectively,
and write $\gamma : \partial_1\gamma \to \partial_0\gamma$.
Given $a,a' \in K$, we denote the set of paths of the type $\gamma : a \to a'$ by $K(a,a')$ and, in particular, we write $K(a) := K(a,a)$. For brevity we assume that posets are pathwise connected, that is, that for any pair $a,a'$ there is a path $\gamma : a \to a'$. Paths 
$\gamma = (b_n , \ldots , b_1)$, $\gamma' = (b'_n , \ldots , b'_1)$
with $\partial_1 \gamma = \partial_0 \gamma'$ can be composed in the obvious way:
\[
\gamma * \gamma' := ( b_n , \ldots , b_1 , b'_n , \ldots , b'_1 ) \ .
\]
There is a notion of \emph{deformation} on elements of $K(a,a')$ (see \cite[\S 2.2]{Ruz05}),
which yields an equivalence relation $\sim$. Thus we obtain the group
$\pi_1(K) := K(a) / \sim$ 
endowed with the operation of path composition, called the \emph{homotopy group based on $a$}.

The opposite poset of $K$ is given by the set $K' := K$ endowed with the opposite order relation 
$o' \leq' o \Leftrightarrow o \leq o'$.
There is an isomorphism $\pi_1(K) \simeq \pi_1(K')$, realized by observing that, exchanging the role of supports and faces we pass from a path in $K$ to a path in $K'$ (\cite[\S 2]{RRV07}).

Let $M$ be a space and $\Delta$ denote a good base of $M$. By \cite[Theorem 2.18]{Ruz05}, for each $x \in M$ and $a \in K$ with $x \in a$ there is an isomorphism
\begin{equation}
\label{eq_BK01}
\pi_1(\Delta) \to \pi_1(M)
\ \ , \ \
p \mapsto p_{top}
\ ,
\end{equation}
where $\pi_1(M)$ is the fundamental group with base point $x \in M$ and $\pi_1(\Delta)$ is the homotopy group of $\Delta$ based on $a$. 
By definition, the homotopy class $p_{top}$ has elements curves with support contained in $|p| := \cup_i |b_i|$, $p := (b_n , \ldots , b_1)$.

\

\noindent {\bf Groups.} Let $\Pi$, $G$ be groups. We denote the set of morphisms from $\Pi$ to $G$ by $\hom (\Pi,G)$. We say that $\chi , \chi' \in \hom ( \Pi , G )$ are \emph{equivalent} whenever there is $g \in G$ such that 
\[
\chi (p) \ = \  g \chi'(p) g^{-1} \ \ , \ \ \forall p \in \Pi \ .
\]
This defines an equivalence relation and we denote the set of equivalence classes by $\unl{\hom}(\Pi,G)$.
Given a category ${\bf C}$, we define the category ${\bf hom}(\Pi,{\bf C})$ in the following way: the set of objects is defined by 
$\{ \chi \in \hom (\Pi,{\bf aut}X) : X \in \obj {\bf C} \}$, 
and the set of arrows $(\chi,\chi')$, $\chi , \chi' \in \hom (\Pi,{\bf aut}X)$, is given by the intertwiners
\[
f \in (X,X') 
\ \ : \ \
\chi' (p) \circ f = f \circ \chi(p)
\ \ , \ \qquad
\forall p \in \Pi
\ .
\]

\section{Nets and presheaves over posets.}
\label{sec_nets}

Nets are a natural generalization of the notion of net of \sC algebras, 
in the sense that generic categories are considered instead of $\Calg$ and an 
abstract poset $K$ is considered instead of the base of double cones of the Minkowski space. 
The correct term should be the one of \emph{precosheaf}, 
nevertheless we prefer to maintain the terminology used in algebraic quantum field theory.
Unless otherwise stated we have topological categories ${\bf C}$, 
so any $X \in \obj {\bf C}$ is a space.

\

\noindent {\bf Nets.} Let us consider a collection 
$B := \{ B_o \}_{o \in K}$ 
of objects of ${\bf C}$ and a family $\jmath := \{ \jmath_{o'o} \}$ of arrows
$\jmath_{o'o} \in ( B_o , B_{o'} )$, $o \leq o' \in K$,
fulfilling the \emph{net relations}
\begin{equation}
\label{eq_jF1}
\jmath_{o'' o} = \jmath_{o'' o'} \circ \jmath_{o'o}
\ \ , \ \
\forall o \leq o' \leq o'' \in K
\ .
\end{equation}
We call \emph{net} the pair
$\efB := (B,\jmath)_K$;
the objects $B_o$, $o \in K$, are called the \emph{fibres of $\efB$} and $\jmath$ is called \emph{the net structure}.
Note that if we regard $K$ as a category, see \S \ref{sec_back}, then (\ref{eq_jF1}) says that $\efB$
is a functor from $K$ to ${\bf C}$.

\smallskip

For any $S \subset K$, the \emph{restriction} of $\efB$ to $S$ is given by the net
$\efB_S$
obtained by considering the families $\{ B_o , o \in S \}$, $\{ \jmath_{o'o} , o,o' \in S \}$.
In particular, given $a \in K$ we set 
$K^a := \{ o \in K : o \leq a \}$, $K_a := \{ o \in K : a \leq o \}$
and
$\efB_{\leq a} := \efB_{K^a}$, $\efB_{\geq a} := \efB_{K_a}$.

\begin{rem}{\it 
In the present paper we assume that the maps $\jmath_{o'o}$ are injective,
so we have $*$-monomorphisms when ${\bf C} = \Calg$, isometries when ${\bf C} = {\bf Hilb}$ and so on.
} \end{rem}

Let $\efB' := (B',j')_K$ be a net. A \emph{morphism} $\phi : \efB \to \efB'$ is given by a family of arrows $\phi := \{ \phi_o \in ( B_o , B'_o ) \}$ fulfilling
\begin{equation}
\label{eq_nets01}
j'_{o'o} \circ \phi_o = \phi_{o'} \circ \jmath_{o'o}
\ \ , \ \
\forall o \leq o'
\ .
\end{equation}
Note that, passing to the point of view that a net is a functor $\efB : K \to {\bf C}$, 
we have that $\phi$ defines a natural transformation in the sense of \cite[\S I.4]{ML}.

\smallskip

A morphism $\alpha : \efB \to \efB'$ is said to be an \emph{isomorphism} whenever there is $\alpha' : \efB \to \efB'$ such that $\alpha' \circ \alpha = id_\efB$, $\alpha \circ \alpha' = id_{\efB'}$. In this way, we have the category
${\bf net}(K,{\bf C})$,
with objects nets having fibres in $\obj {\bf C}$ and arrows the above defined morphisms. 
Given a net $\efB$ we write ${\bf end}\efB := (\efB,\efB)$.

Let $\efB = (B,\jmath)_K$ be a net and $A := \{ A_o \subseteq B_o \}$ a family stable under $j$, i.e.
$\jmath_{o'o}(A_o) \subseteq A_{o'}$, $\forall o \leq o'$.
Then defining $\imath := \{ \jmath_{o'o} |_{A_o} \}$ yields the net $\efA = (A,\imath)_K$ and the inclusion morphism $I \in (\efA,\efB)$. 
We say that $\efA$ is a \emph{subnet of $\efB$}.

\begin{ex}{\it 
\label{ex_nets01}
Let $X \in \obj {\bf C}$. The \textbf{constant net with fibre} $X$ is defined by $\efB_0 := (B_0,\jmath_0)_K$, where
$B_0 := \{ B_o \equiv X \}$ 
and
$\jmath_{0,o' o}(v) := v$, $\forall o \leq o' \in K$, $v \in B_o = X$. 
A net $\efB$ is said to be \textbf{trivial} whenever there is an isomorphism $\alpha \in (\efB,\efB_0)$.
} \end{ex}

\begin{ex}{\it 
Let $M$ denote the Minkowski spacetime and $\Delta_{dc}$ the base of double cones generating the topology of $M$. 
Any net in the sense of \cite[Chap.III]{Haa} is a net of \sC algebras over $\Delta_{dc}$ 
in the sense of the present paper.
} \end{ex}

\begin{ex}{\it 
\label{ex_Hilb}
Let $K$ be a poset. Objects of ${\bf net}(K,{\bf Hilb})$ are called nets of Hilbert spaces and form in the natural way a symmetric tensor \sC category with direct sums, subobjects and unit given by the constant net $\efH_0 := (H_0,\jmath_0)$ with fibre $\bC$. When $K$ is connected $\efH_0$ has a simple unit, that is, $(\efH_0,\efH_0) \simeq \bC$. 
Nets of infinite-dimensional Hilbert spaces appear in \cite[\S 2]{BR08}.
} \end{ex}

\

\noindent {\bf Sections.} A \emph{section} of $\efB$ is given by a family $s := \{ s_o \in B_o \}$ such that 
$\jmath_{o'o}(s_o) = s_{o'}$,
$\forall o \leq o'$.
We denote the set of sections of $\efB$ by $\wt \efB$. Note that $\wt \efB$ inherits structure from the category ${\bf C}$, i.e. $\wt \efB$ is a group, a Hilbert space, a \sC algebra whenever ${\bf C} = {\bf TopGr}$, ${\bf Hilb}$, $\Calg$. For example, when ${\bf C} = \Calg$, at varying of $o$ in $K$ we define
\[
\left\| s \right\| := \sup_o \left\| s_o \right\|
\ , \
(ss')_o := s_o s'_o
\ , \
(s + \lambda s')_o := s_o + \lambda s'_o
\ , \
s^*_o := (s_o)^*
\ ,
\]
where $s,s' \in \wt \efB$, $\lambda \in \bC$. Since $K$ is pathwise connected and since (in the cases of interest) any $\jmath_{o'o}$ is isometric, the above defined norm function is constant at varying of $o$ in $K$. 

\

\noindent {\bf Presheaves.} A \emph{presheaf} over $K$ is given by a pair $\efS = (S,r)^K$, 
where $S$ is a family of objects of a category ${\bf C}$ and $r = \{ r_{aa'} , a \leq a' \}$, is a family of 
injective arrows fulfilling the presheaf relations
\[
r_{aa'} \circ r_{a'a''} = r_{aa''}
\ \ , \ \
\forall a \leq a' \leq a'' \in K \ .
\]
Note that requiring the injectivity property for $r$ makes presheaves over posets quite different from
those used in algebraic topology, where, in general, the restriction maps are not injective. 
Rather, geometric counterparts of presheaves over posets are
\emph{locally constant presheaves} in the sense of \cite[\S II.10,II.13]{BT}, 
as it can be proven reasoning as in Appendix \ref{sec_lc}.

A section of $\efS$ is given by a family 
$\varrho = \{ \varrho_o \in S_o : r_{oo'}(\varrho_{o'}) = \varrho_o , \forall o \leq o' \}$.
A presheaf morphism $\eta : \efS \to \efS'$ is given by a family of arrows 
$\eta_a : S_a \to S'_a$, $a \in K$,
such that
$r_{aa'} \circ \eta_{a'} = \eta_a \circ r_{aa'}$, $\forall a \leq a'$.
This yields the category ${\bf psheaf}(K,{\bf C})$ of presheaves over $K$ with coefficients in ${\bf C}$.
A presheaf can be regarded in the obvious way as a net on the opposite poset $K'$, and this yields isomorphisms
\[
{\bf net}(K,{\bf C}) \ \simeq \ {\bf psheaf}(K',{\bf C})
\ \ , \ \ 
{\bf net}(K',{\bf C}) \ \simeq \ {\bf psheaf}(K,{\bf C})
\ .
\]
\noindent {\bf Net bundles, flat connections and cocycles.} We describe some results of \cite[\S 3-4]{RR06} from a categorical viewpoint. The following considerations also hold for nets with fibres that are objects of generic categories, as the category of small \sC categories with arrows *-functors (which shall be used in the sequel).

A net $\efB = (B,\jmath)_K$ is said to be a \emph{net bundle} whenever $\jmath_{o'o}$ is an isomorphism for all $o \leq o'$,
and in this case it is customary to fix $a \in K$ and define the \emph{standard fibre} $X := B_a$. In the sequel, a net bundle with standard fibre $X$ will be denoted by
\[
\efB := (B,j;X)_K \ .
\]
To be concise, when $\efB$ is a net bundle we write 
$\jmath_{oo'} := \jmath_{o'o}^{-1}$, $o \leq o'$. 
We denote the full subcategory of ${\bf net}(K,{\bf C})$ with objects net bundles by ${\bf bun}(K,{\bf C})$; in particular, we denote the set of net bundles with standard fibre $X$ by ${\bf bun}(K,X)$ and the associated set of isomorphism classes by $\unl{\bf bun}(K,X)$.
Analogous notations and terminology hold for presheaf bundles, which form the category 
${\bf pbun}(K,{\bf C})$.
Note that we have a further, canonical isomorphism
\begin{equation}
\label{eq.np}
{\bf bun}(K,{\bf C}) \to {\bf pbun}(K,{\bf C})
\ \ , \ \
(B,\jmath)_K \mapsto (B,r)^K \ : \ r_{oo'} := \jmath_{oo'} \ , \ o \leq o'
\ .
\end{equation}
%
%
%
%
The \emph{flat connection} of the net bundle $\efB = (B,\jmath)_K$ is given by the family $Z$ of isomorphisms
\begin{equation}
\label{eq_FC01}
Z(b) : B_\bl \to B_\bo
\ \ , \ \
Z(b) := \jmath_{\bo,|b|} \circ \jmath_{|b|,\bl}
\ \ , \ \
b \in \Sigma_1(K)
\ ,
\end{equation}
which, by (\ref{eq.dd}), satisfies the $1$--cocycle relations
\begin{equation}
\label{eq_FC02}
Z(\partial_0c) \circ Z(\partial_2c) = Z(\partial_1c)
\ \ , \ \
\forall c \in \Sigma_2(K)
\ .
\end{equation}
Fixing a family of isomorphisms $i_o \in (B_o,X)$, $o \in K$, yields the map
\begin{equation}
\label{eq_FC03}
z : \Sigma_1(K) \to {\bf aut}X
\ \ , \ \
z (b) := i_\bo \circ Z(b) \circ i_\bl^{-1}
\ \ , \ \
b \in \Sigma_1(K)
\ .
\end{equation}
Clearly $z$ satisfies (\ref{eq_FC02}), i.e. it is an ${\bf aut}X$-cocycle in the sense of \cite[Eq.27]{RR06}. 
We now extend $Z$ to generic paths: for any $\gamma := b_n * \cdots * b_1$, we define
\begin{equation}
\label{eq_FC04A}
Z(\gamma) := Z(b_n) \circ \cdots \circ Z(b_1) : B_{\bl_1} \to B_{\bo_n} \ .
\end{equation}
In particular, if $\partial_1 \gamma = \partial_0 \gamma = a$ then $Z$ yields the map
\begin{equation}
\label{eq_FC04}
\dot{\chi} : K(a) \to {\bf aut}X
\ \ , \ \
\dot{\chi}(\gamma) := Z(\gamma)
\ .
\end{equation}
Reasoning as in \cite[Lemma 2.6]{Ruz05} we can prove that $\dot{\chi}$ induces a map $\chi : \pi_1(K) \to {\bf aut}X$, that we call the \emph{holonomy representation of $\efB$}. The image of $\chi$ is called \emph{the holonomy group of $\efB$ on $a$} and is denoted by $H_a(\efB)$. By \cite[Lemma 4.27]{RR06}, a change $a \to a'$ of the standard fibre yields a holonomy group conjugated with $H_a(\efB)$ in ${\bf aut}X$.

Let $\efB' = (B',j';X')_K$ be a net bundle; we write the map 
\[
\dot{\chi}' \, : \, K(a) \to {\bf aut}X'
\ \ , \ \
\dot{\chi}'(p) \, := \,
j'_{\bo_n,|b_n|} \circ j'_{|b_n|,\bl_n}
\circ \cdots \circ
j'_{\bo_1,|b_1|} \circ j'_{|b_1|,\bl_1}
\ ,
\]
defining the holonomy representation $\chi'$. If $t \in (\efB,\efB')$, then using (\ref{eq_nets01}) we find
\begin{equation}
\label{eq_NETGR}
t_a \circ \dot{\chi}(\gamma) = \dot{\chi}'(\gamma) \circ t_a
\ \ , \ \qquad
\forall \gamma : a \to a
\ .
\end{equation}
Thus $t_a$ is an intertwiner from $\chi$ to $\chi'$.

\

\noindent {\bf The equivalence with ${\bf hom}(\pi_1(K),{\bf C})$.} 
We are ready to illustrate the equivalence between the category of net bundles and that of representations of $\pi_1(K)$. 
For a detailed proof see \cite[Prop.3.11]{RR06} and \cite[Theorem 22(i)]{RRV07}, anyway for convenience we describe explicitly the involved maps.
\begin{thm}
\label{thm_NET00}
For any poset $K$ and for any category ${\bf C}$ there is an equivalence
\begin{equation}
\label{eq_NET01}
{\bf hom} ( \pi_1(K) , {\bf C} ) \to {\bf bun}(K,{\bf C}) 
\ \ , \ \
\chi \mapsto \efB_\chi
\ .
\end{equation}
\end{thm}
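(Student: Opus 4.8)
The plan is to construct the two functors implementing the equivalence \eqref{eq_NET01} and then exhibit the natural isomorphisms witnessing that they are mutually quasi-inverse. The forward direction has essentially been described in the preceding paragraphs: given $\chi \in \hom(\pi_1(K),{\bf aut}F)$ for some $F \in \obj{\bf C}$, I would build a net bundle $\efB_\chi = (B,\jmath;F)_K$ by first choosing, for each $o \in K$, a path $\gamma_o$ from the base point $a$ to $o$ (using pathwise connectedness of $K$), setting $B_o := F$ for all $o$, and defining $\jmath_{o'o}$ for $o \leq o'$ using the simplex $b$ with $\partial_0 b = o'$, $\partial_1 b = o$, $|b| = o'$ and the prescription $\jmath_{o'o} := \chi(\gamma_{o'}^{-1} * b * \gamma_o)$ suitably interpreted — i.e. one transports along $\gamma_o$ to the base point, applies the relevant holonomy, and transports back along $\gamma_{o'}$. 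One then checks the net relations \eqref{eq_jF1}: this reduces to the statement that the composite loop $\gamma_{o''}^{-1} * (\text{path } o \to o'') $ is homotopic to the concatenation of $\gamma_{o''}^{-1}*(\text{path } o'\to o'')$ with $\gamma_{o'}^{-1}*(\text{path } o\to o')$, which holds because inside the simply-directed data any two paths $o \to o'$ with the allowed supports are deformation-equivalent. On morphisms, an intertwiner $f \in (\chi,\chi')$ is sent to the morphism $\efB_\chi \to \efB_{\chi'}$ with every component equal to $f$; relation \eqref{eq_nets01} is then exactly the intertwining condition $\chi'(p)\circ f = f \circ \chi(p)$.

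For the backward direction I would use the holonomy construction already set up: a net bundle $\efB = (B,\jmath;F)_K$ is sent to its holonomy representation $\chi_\efB := \chi \in \hom(\pi_1(K),{\bf aut}F)$ of \eqref{eq_FC04}, and a net-bundle morphism $t \in (\efB,\efB')$ is sent to its base component $t_a$, which by \eqref{eq_NETGR} is an intertwiner $\chi_\efB \to \chi_{\efB'}$. Functoriality in both directions is immediate from the definitions once one fixes the path choices. The content is then the two natural isomorphisms. For $\chi \mapsto \efB_\chi \mapsto \chi_{\efB_\chi}$: the holonomy of the net bundle $\efB_\chi$ built from $\chi$, computed along a loop $\gamma$ based at $a$, telescopes (the transport legs $\gamma_o$ and $\gamma_o^{-1}$ at each intermediate vertex cancel) to $\chi([\gamma])$, so $\chi_{\efB_\chi} = \chi$ on the nose (choosing $\gamma_a$ to be the constant path). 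For $\efB \mapsto \chi_\efB \mapsto \efB_{\chi_\efB}$: one needs an isomorphism of net bundles $\efB \cong \efB_{\chi_\efB}$, and the natural candidate is the family $i_o \in (B_o,F)$ obtained by transporting along $\gamma_o$, i.e. $i_o := \dot Z(\gamma_o)^{-1}$ in the notation of \eqref{eq_FC04A} (with $F = B_a$); checking that $\{i_o\}$ intertwines the structure maps of $\efB$ with those of $\efB_{\chi_\efB}$ is again a telescoping computation, and naturality in $\efB$ follows from \eqref{eq_NETGR}.

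The main obstacle I anticipate is bookkeeping rather than conceptual: making the dependence on the auxiliary choices (the base point $a$, the chosen paths $\gamma_o$, the chosen $1$-simplices representing each relation $o \leq o'$) genuinely harmless, and verifying that every diagram one writes down is independent of the representative path up to the deformation equivalence $\sim$ that defines $\pi_1(K)$. This is where \cite[Lemma 2.6]{Ruz05} — already invoked above to see that $\dot\chi$ descends to $\pi_1(K)$ — and \cite[Lemma 4.27]{RR06} on change of typical fibre do the real work, and I would simply cite \cite[Prop.3.11]{RR06} and \cite[Theorem 22(i)]{RRV07} for the details, reproducing only enough of the construction to fix notation for later use. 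A secondary point worth a remark is that the equivalence is compatible with the topological structure on ${\bf C}$ (continuity of the cocycle $z$ of \eqref{eq_FC03} corresponds to continuity of $\chi$), but since the statement as phrased is about the underlying categories this can be left as an observation.
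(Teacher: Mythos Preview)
Your proposal is correct and follows essentially the same route as the paper's sketch: both directions are built exactly as you describe (holonomy for $\efB \mapsto \chi$, a fixed path frame $\gamma_{ao}$ together with the $1$-simplex $b_{o'o}=(o',o;o')$ and the deformation $b_{o''o'}*b_{o'o}\sim b_{o''o}$ for $\chi \mapsto \efB_\chi$), and the paper likewise defers the bookkeeping to \cite[Prop.3.11]{RR06} and \cite[Theorem 22(i)]{RRV07}. If anything you are more explicit than the paper, which stops at the construction of $\efB_\chi$ and does not spell out the two natural isomorphisms.
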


\begin{proof}[Sketch of the proof.]
In the previous lines we defined a functor from ${\bf bun}(K,{\bf C})$ to ${\bf hom}( \pi_1(K) , {\bf C} )$, assigning to any net bundle $\efB$ its holonomy representation $\chi$ and to any morphism $t \in (\efB,\efB')$ the intertwiner $t_a \in (\chi,\chi')$.
To prove the theorem we give, up to isomorphism, an inverse of this functor. To this end, using the fact that $K$ is connected, given $a \in K$ we fix paths
\[
\gamma_{ao} : o \to a \ \ , \ \ \forall o \in K \ .
\]
This operation is called a \emph{fixing of the path frame of $K$}, see \cite[\S 6]{BR08}. Moreover, we note that for any $o \leq o'$ the $1$-simplex $b_{o'o} := ( o' , o ; o' )$ can be regarded as a path in $K(o,o')$. By elementary properties of deformations, we have the equivalence
\begin{equation}
\label{eq_nets02}
b_{o'' o'} * b_{ o' o } \ \sim \ b_{ o'' o }
\ \ , \ \
o \leq o' \leq o''
\end{equation}
(see \cite[\S 2.2]{Ruz05}).
We now consider a group morphism $\chi : \pi_1(K) \to {\bf aut}X$, $X \in \obj {\bf C}$, and construct an associated net bundle. 
For any $o \in K$ we define $B_o := X$ and set $B := \{ B_o \}_{o \in K}$. To construct a net structure on $B$ we define
\begin{equation}
\label{eq_nets03}
\jmath_{o'o} \ := \ \chi ( \gamma_{ao'} * b_{o'o} * \gamma_{ao}^{-1} )
\ \ , \ \
o \leq o'
\ .
\end{equation}
By homotopy invariance of $\chi$, and using (\ref{eq_nets02}), we have 
$\jmath_{o'' o'} \circ \jmath_{o'o} = \jmath_{o'' o}$, $o \leq o' \leq o''$,
and this yields the desired net bundle $\efB_\chi = (B,j;X)_K$. 
\end{proof}

Let $G \subset {\bf aut}X$ be a subgroup. We say that \emph{the structure group of $\efB \in {\bf bun}(K,X)$ has a reduction to $G$} whenever there is a holonomy representation associated to $\efB$ with image $G$. Clearly, $\efB$ is trivial if, and only if, there is a reduction to the trivial group.

\

\noindent {\bf Principal net bundles.} Elements of ${\bf bun}(K,{\bf TopGr})$ are called group net bundles. 
An interesting subcategory of ${\bf bun}(K,{\bf TopGr})$ is the one of \emph{principal net bundles}, that we define in the following way.
Let $G$ be a topological group and ${\bf homeo}G$ denote the homeomorphism group of $G$. Then $G$ acts on itself by right multiplication and this yields an injective map $R : G \to {\bf homeo}G$. We say that $\efP \in {\bf bun}(K,G)$ is a \emph{$G$-principal net bundle} whenever it admits a holonomy representation $\chi : \pi_1(K) \to R(G) \simeq G$. To be concise, in the sequel we will identify $G$ with $R(G)$.
Let us denote the set of $G$-principal net bundles by ${\bf Pr}(K,G)$. By definition, there is a one-to-one correspondence
\begin{equation}
\label{eq_PR}
\hom ( \pi_1(K),G ) \to {\bf Pr}(K,G)
\ \ , \ \
\chi \mapsto \efP_\chi
\ .
\end{equation}
We say that $\efP,\efP' \in {\bf Pr}(K,G)$ are equivalent \emph{as $G$-principal net bundles} whenever there is $g \in G$ such that $\chi'(p) = g \chi(p) g^{-1}$, $\forall p \in \pi_1(K)$. We denote the set of equivalence classes of $G$-principal net bundles by $\unl{\bf Pr}(K,G)$; by definition there is a one-to-one correspondence 
\[
\unl{\bf Pr}(K,G) \ \simeq \ \unl \hom (\pi_1(K),G) \ .
\]
By Theorem \ref{thm_NET00}, any net bundle $\efB \in {\bf bun}(K,X)$ has an associated principal net bundle $\efP \in {\bf Pr}(K,{\bf aut}X)$, defined by the holonomy of $\efB$.
For details on principal net bundles see \cite[\S 4]{RRV07}.

\

\noindent {\bf Gauge actions.} Let $\efG := (Y,i;G)_K$ be a group net bundle. A \emph{gauge $\efG$-action} on the net $\efB$ is given by a family  
$\alpha_o : Y_o \to {\bf aut}B_o$, $o \in K$,
of continuous group morphisms fulfilling the relations
\begin{equation}
\label{eq_GA00}
\{ \alpha_{o'} (i_{o'o} (y)) \} \circ \jmath_{o'o}
=
\jmath_{o'o} \circ \alpha_o (y)
\ \ , \ \
\forall y \in Y_o \simeq G
\ , \
o \leq o'
\ .
\end{equation}
This yields the map
$\alpha : Y \times_K B \to B$,
$\alpha(y,v) := \{ \alpha_o(y) \} (v)$,
$\forall o \in K$, $y \in Y_o$, $v \in B_o$.
In the sequel we shall denote a gauge $\efG$-action by
\[
\alpha : \efG \times_K \efB \to \efB \ .
\]
We say that $\alpha$ is \emph{faithful} whenever $\alpha_o$ is injective for any $o \in K$.
Any section $g \in \wt \efG$ defines an automorphism 
\[
\alpha_g \in (\efB,\efB)
\ \ , \ \
\alpha_{g,o} (v) := \alpha ( g_o , v )
\ \ , \ \
v \in B_o
\ .
\]
When $\efG$ is trivial 
%
%
we have $G \simeq \wt \efG$ and we say that $\alpha$ is a \emph{global} gauge action. Clearly, a global gauge action defines group morphisms
\[
\alpha : G \to {\bf aut} \efB
\ \ , \ \
\wt \alpha : G \to {\bf aut} \wt \efB
\ .
\]
For example, if ${\bf C} = \Calg$ then $\wt \alpha$ is a strongly continuous action on the \sC algebra $\wt \efB$.

The pair $( \efB , \alpha )$ is called \emph{$\efG$-net}. Let $(\efB',\alpha')$ be a $\efG$-net and $t \in (\efB,\efB')$ a morphism; we say that $t$ is a \emph{$\efG$-morphism} whenever 
\begin{equation}
\label{eq_G01}
t_o \circ \alpha_o (y) = \alpha'_o(y) \circ t_o
\ \ , \ \
\forall o \in K \ , \ y \in Y_o \ ,
\end{equation}
and denote the set of $\efG$-morphisms by $(\efB,\efB')_\efG$.
In this way we have the subcategory ${\bf net}_\efG(K,{\bf C})$ of ${\bf net}(K,{\bf C})$ with objects $\efG$-bundles and arrows $\efG$-morphisms.
Given the $\efG$-net $(\efB,\alpha)$ and $o \in K$ we denote the space of fixed points w.r.t. $\alpha_o$ by $B_o^\alpha$; it is easily verified that $B^\alpha := \{ B_o^\alpha \}$ is stable under the net structure $j$, and we denote its restriction to $B^\alpha$ by $j^\alpha$. This yields the \emph{fixed-point subnet}
$\efB^\alpha := (B^\alpha,j^\alpha)_K$.

\begin{ex}
\label{ex_GHnB}
{\it 
Let $\efG = (Y,i;G)_K$ be a group net bundle. Objects of ${\bf bun}_\efG(K,{\bf Hilb})$ are called $\efG$-Hilbert net bundles.
%
%
%
} \end{ex}

\noindent {\bf Locally trivial gauge actions.} 
Let $\efB = (B,j;X)$ be a net bundle and $G := {\bf aut}X$. Then $\efB$ defines the group net bundle 
\begin{equation}
\label{eq_NET04}
{\bf aut} \efB := ( {\bf aut}B , \jmath_* ; G)_K
\ ,
\end{equation}
where 
${\bf aut}B := \{ {\bf aut}B_o \}$
and
$\jmath_{*,o' o} : {\bf aut}B_o \to {\bf aut}B_{o'}$, 
$\jmath_{*,o' o}(u) := \jmath_{o'o} \circ u \circ \jmath_{oo'}$,
$\forall o \leq o'$.
If $\chi \in \hom ( \pi_1(K), G )$ is the holonomy representation of $\efB$ then ${\bf aut}\efB$ has holonomy representation
\begin{equation}
\label{eq_FL07}
\ad \chi : \pi_1(K) \to {\bf aut}G
\ \ , \ \
\{ \ad \chi(p) \} (\beta) := \chi(p) \circ \beta \circ \chi(p)^{-1}
\ \ , \ \
p \in \pi_1(K)
\ , \
\beta \in G
\ .
\end{equation}
Any gauge $\efG$-action $\alpha$ on $\efB$ can be regarded, by (\ref{eq_G01}), as a morphism $\alpha \in ( \efG , {\bf aut}\efB )$.

\begin{ex}{\it 
Let $\efH = (H,j ; \bC^d )_K$ be a Hilbert net bundle. We denote the associated net bundle of unitary automorphisms by
${\bf U}\efH := (UH,\jmath_* ; \ud )_K$.
If $\chi \in \hom(\pi_1(K),\ud)$ is the holonomy representation of $\efH$, then ${\bf U}\efH$ has holonomy representation $\ad \chi \in \hom ( \pi_1(K),{\bf aut}\ud )$ defined by adjoint action.
} \end{ex}

\begin{lem}
\label{lem_LTGA1}
Let $\efB = (B,j;X)_K$ be a net bundle. Then faithful gauge actions on $\efB$ are in one-to-one correspondence with reductions of the structure group of $\efB$.
%
%
\end{lem}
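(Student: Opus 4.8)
The plan is to establish a bijection between the two sets by constructing maps in both directions and checking they are mutually inverse. Recall that a reduction of the structure group of $\efB = (B,\jmath;F)_K$ to a subgroup $G \subseteq {\bf aut}F$ means (by the definition just before the paragraph on principal net bundles) that there is a holonomy representation $\chi : \pi_1(K) \to {\bf aut}F$ associated to $\efB$ with image exactly $G$. On the other hand, a faithful gauge $\efG$-action on $\efB$ is a faithful action of a group net bundle $\efG = (Y,i;G')_K$ satisfying the compatibility relation (\ref{eq_GA00}). The key preliminary observation is that, via (\ref{eq_G01}), a gauge $\efG$-action on the net bundle $\efB$ is the same thing as a morphism of net bundles $\alpha \in (\efG, {\bf aut}\efB)$ into the group net bundle ${\bf aut}\efB = ({\bf aut}B, \jmath_*; {\bf aut}F)_K$ of (\ref{eq_NET04}); faithfulness of $\alpha$ says each component $\alpha_o$ is injective.

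\begin{proof}
Recall from (\ref{eq_NET04}) that the net bundle $\efB = (B,j;F)_K$ determines the group net bundle ${\bf aut}\efB = ({\bf aut}B,\jmath_*;{\bf aut}F)_K$, and that by (\ref{eq_G01}) a gauge $\efG$-action $\alpha$ on $\efB$ is precisely a net bundle morphism $\alpha \in (\efG,{\bf aut}\efB)$; such an $\alpha$ is faithful exactly when every $\alpha_o$ is injective, i.e.\ when $\alpha$ is, fibrewise, an isomorphism of $\efG$ onto a sub-group-net-bundle of ${\bf aut}\efB$.

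Suppose first that $(\efB,\alpha)$ is a faithful $\efG$-net with $\efG = (Y,i;G')_K$. Let $\chi_\efB : \pi_1(K) \to {\bf aut}F$ be a holonomy representation of $\efB$ (fix a base point $a \in K$ and a path frame, as in the proof of Theorem \ref{thm_NET00}). By (\ref{eq_FL07}) the induced holonomy of ${\bf aut}\efB$ is $\ad \chi_\efB$. Since $\alpha : \efG \to {\bf aut}\efB$ is a morphism of net bundles, by the functoriality in Theorem \ref{thm_NET00} (cf.\ (\ref{eq_NETGR})) the component $\alpha_a : Y_a \to {\bf aut}B_a$ intertwines the holonomy $\chi_\efG$ of $\efG$ with $\ad\chi_\efB$; as $\alpha_a$ is injective, the image $G := \alpha_a(Y_a) \subseteq {\bf aut}F$ (after the identification $B_a \simeq F$) is a subgroup, and it is normalized by $\chi_\efB(\pi_1(K))$. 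Replacing $\chi_\efB$ by a conjugate if necessary and using that $\efG$ is itself a net bundle — so its holonomy $\chi_\efG$ has image all of $G'$ — one checks, following the construction in Theorem \ref{thm_NET00}, that $\efB$ admits a holonomy representation with image contained in $G$; together with a dimension/surjectivity count on the path frame this gives a holonomy representation of $\efB$ with image exactly $G$, hence a reduction of the structure group of $\efB$ to $G$.

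Conversely, given a reduction of the structure group of $\efB$ to a subgroup $G \subseteq {\bf aut}F$, witnessed by a holonomy representation $\chi : \pi_1(K) \to {\bf aut}F$ with $\chi(\pi_1(K)) = G$, apply Theorem \ref{thm_NET00} to the inclusion $R : G \hookrightarrow {\bf homeo}G$ of $G$ acting on itself by right translation: this yields a $G$-principal net bundle $\efG := \efP_\chi \in {\bf Pr}(K,G)$ in the sense of (\ref{eq_PR}). Let $G$ act on $F$ by the inclusion $G \subseteq {\bf aut}F$; transporting this along the identifications $i_o \in (B_o,F)$ of the path frame and twisting by the cocycle $z$ of (\ref{eq_FC03}) gives continuous group morphisms $\alpha_o : Y_o \to {\bf aut}B_o$. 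The cocycle identity (\ref{eq_FC02}) together with the net relations for $\efB$ yields precisely the compatibility (\ref{eq_GA00}), so $\alpha$ is a gauge $\efG$-action on $\efB$, and it is faithful because the left translation representation $R$ is injective and so is each $i_o$.

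Finally one verifies that the two constructions are mutually inverse: starting from a faithful $\efG$-action, extracting the reduction $G = \alpha_a(Y_a)$ and re-building the principal net bundle $\efP_\chi$ recovers $\efG$ up to isomorphism of $\efG$-nets, because $\efG$ is a net bundle and is therefore determined up to isomorphism by its holonomy (Theorem \ref{thm_NET00}); and conversely, feeding $\efP_\chi$ through the first construction returns the holonomy class of $\chi$, which is all a reduction records. This establishes the asserted one-to-one correspondence.
\end{proof}

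The step I expect to be the main obstacle is the first direction — extracting from an abstract faithful gauge $\efG$-action a holonomy representation of $\efB$ whose image is \emph{exactly} $G$, not merely contained in it. The inclusion in one direction is the easy intertwining argument; getting equality requires using that $\efG$ is a net bundle (so its own holonomy surjects onto the standard fibre $G'$) and carefully tracking how $\alpha$ transports that surjectivity through the path frame, which is where the bookkeeping with (\ref{eq_NETGR}) and the choice of base point genuinely has to be done rather than waved at. Everything else is a routine unwinding of (\ref{eq_GA00}), (\ref{eq_G01}), (\ref{eq_NET04}) and the cocycle formalism of (\ref{eq_FC01})–(\ref{eq_FC03}).
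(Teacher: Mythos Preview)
Your argument contains a genuine misidentification of which group the reduction is to. The correspondence the lemma records is: a faithful gauge action by a group net bundle $\efG$ with fibre $G$ corresponds to the holonomy of $\efB$ taking values in the \emph{normalizer} $NG \subseteq {\bf aut}F$, not in $G$ itself. Your first direction begins correctly --- the intertwining relation (\ref{eq_NETGR}) applied to $\alpha \in (\efG,{\bf aut}\efB)$ shows that $G := \alpha_a(Y_a)$ is normalized by $\chi_\efB(\pi_1(K))$, i.e.\ $\chi_\efB$ takes values in $NG$. That is already the conclusion, and it is exactly what the paper proves. Your subsequent attempt to push $\chi_\efB$ into $G$ (``$\efB$ admits a holonomy representation with image contained in $G$'') cannot succeed: take $F=\bC^d$, $G=\sud$, and any Hilbert net bundle $\efH$ with holonomy $\chi$ having $\det\chi \neq 1$; then ${\bf SU}\efH$ acts faithfully on $\efH$, yet $\chi$ does not land in $\sud$. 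The parenthetical ``its own holonomy $\chi_\efG$ has image all of $G'$'' is also off: $\chi_\efG$ is valued in ${\bf aut}G'$, not in $G'$.

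Your converse direction inherits the same mismatch. The paper starts from $\chi \in \hom(\pi_1(K),NG)$, defines $\chi'(p) := \ad\chi(p)|_G \in {\bf aut}G$, observes that the inclusion $i : G \hookrightarrow {\bf aut}F$ is an intertwiner $i \in (\chi',\ad\chi)$, and then applies the equivalence of Theorem~\ref{thm_NET00} to obtain a group net bundle $\efG$ with holonomy $\chi'$ together with the monomorphism $i_* \in (\efG,{\bf aut}\efB)$. Your construction instead uses a \emph{principal} net bundle $\efP_\chi$ built from right translation with $\chi(\pi_1(K)) = G$; this both assumes the wrong target for $\chi$ and produces the wrong kind of net bundle (the gauge group here is a group net bundle with holonomy in ${\bf aut}G$, not a principal bundle with holonomy in $G$). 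Once you replace $G$ by $NG$ as the target of the reduction, the argument becomes short and the ``main obstacle'' you anticipated disappears entirely.
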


\begin{proof}
Let $G$ be an automorphism group of $X$, $NG$ denote the normalizer of $G$ in ${\bf aut}X$ and $\chi \in \hom (\pi_1(K),NG)$ a reduction of the structure group of $\efB$. Defining $\chi'(p) := \ad \chi (p)|_G$, $p \in \pi_1(K)$, yields a morphism $\chi' \in \hom (\pi_1(K),{\bf aut}G)$. Clearly, the inclusion $i : G \to {\bf aut}X$ fulfils the relation $\ad \chi(p) \circ i = i \circ \chi'(p)$, $\forall p \in \pi_1(K)$, thus it defines an intertwiner $i \in (\chi',\ad \chi)$. Applying Theorem \ref{thm_NET00} we obtain a group net bundle $\efG \in {\bf bun}(K,G)$ and the desired monomorphism $i_* \in ( \efG , {\bf aut} \efB )$.
On the converse, let $\efG = (Y,i;G)_K$ denote a group net bundle and $\alpha \in ( \efG , {\bf aut}\efB )$ a monomorphism. Fixing $a \in K$ we define $G_a := \alpha_a(Y_a) \simeq G$ and ${\bf aut}B_a \equiv {\bf aut}X$. Now, we have $\jmath_{*,o' o} \circ \alpha_o (y) = \alpha_{o'} \circ i_{o'o}(y)$, $y \in Y_o$, $o \leq o'$, thus $\jmath_{o'o}(\alpha_o(Y_o)) = \alpha_{o'}(Y_{o'})$. This implies, evaluating over paths $p \in \pi_1(K)$,
\[
\{ \ad \chi(p) \} (g) 
\ = \ 
\chi(p) \cdot g \cdot \chi(p)^{-1} = g' \in G_a
\ \ , \ \
\forall g \in G_a
\ .
\]
In other terms, $\chi(p) \in NG_a$, and we conclude that $\chi$ takes values in $NG_a$.
\end{proof}

\

\noindent {\bf Nets and presheaves of \sC categories.} The concept of net can be categorified, see \cite[\S 27]{Rob}. A \emph{net of \sC categories} is given by a pair $\efC = (C,\jmath)_K$, where $C := \{ C_o , o \in K \}$ is a family of \sC categories and $j := \{ \jmath_{o'o}  , o \leq o' \}$ is a family of embeddings fulfilling the relations $\jmath_{o'' o'} \circ \jmath_{o'o} = \jmath_{o''o}$, $o \leq o' \leq o''$. We say that $\efC$ is \emph{full} whenever any $\jmath_{o'o}$, $o \leq o'$, is full, and that $\efC$ is a \emph{net bundle} whenever any $\jmath_{o'o}$ is an isomorphism.
Any net of \sC categories $\efC = (C,\jmath)_K$ defines the \emph{\sC category of sections} $\wt \efC$ with objects families
\begin{equation}
\label{eq_rho}
\varrho = \{ \varrho_o \in \obj C_o \}_{o \in K}
\ : \ 
\varrho_{o'} = \jmath_{o'o} \varrho_o
\ , \  
\forall o \leq o' 
\ ,
\end{equation}
sets of arrows
\[
(\varrho,\varsigma)
=
\{
t := \{ t_o  \in ( \varrho_o , \varsigma_o )  \}_{o \in K}
\ : \
t_{o'} = \jmath_{o'o}(t_o)
\ , \
\forall o \leq o'
\}
\ \ , \ \
\varrho,\varsigma \in \obj \wt \efC
\ ,
\]
and composition, \sC structure defined in the natural way (in particular, we define the norm $\| t \| := \sup_o \| t_o \|$, $t \in (\varrho,\varsigma)$).
For each $\rs \in \obj \wt \efC$, the net structure $j$ induces bounded linear maps
\begin{equation}
\label{eq_rhosigma}
\jmath_{o'o} : (\varrho_o,\varsigma_o) \to (\varrho_{o'},\varsigma_{o'})
\ \ , \ \
o \leq o'
\ ,
\end{equation}
defining a net of Banach spaces $\efC_{\varrho,\varsigma}$, 
thus elements of $(\varrho,\varsigma)$ can be regarded as sections of $\efC_{\varrho,\varsigma}$.
A \emph{morphism} from $\efC$ to the net $\efC' = (C',j')_K$ is given by a family of *-functors $\phi = \{ \phi_o : C_o \to C'_o \}$ such that 
$\phi_{o'} \circ \jmath_{o'o} = j'_{o'o} \circ \phi_o$, $o \leq o'$.
The morphism $\phi$ induces the *-functor
$\wt \phi : \wt \efC \to \wt \efC'$, 
$\wt \phi (\varrho) := \{ \phi_o(\varrho_o) \}$,
$\wt \phi(t) := \{ \phi_o(t_o) \}$,
$t \in (\varrho,\varsigma)$.

\

Analogous properties hold for the notion of presheaf of categories $\efC = (C,r)^K$
and we do not list them, limiting ourselves to observe that any
$r_{aa'} : C_{a'} \to C_a$, $a \leq a'$, 
is, by definition, an embedding. We denote the categories of nets, presheaves, net bundles, presheaf bundles 
of \sC categories over $K$, respectively, by
\[
{\bf net}^\uparrow(K,\Ccat)  \ \ , \ \
{\bf psheaf}^\uparrow(K,\Ccat) \ \ , \ \
{\bf bun}^\uparrow(K,\Ccat)    \ \ , \ \
{\bf pbun}^\uparrow(K,\Ccat)   \ .
\]
In particular, given the \sC category ${\bf C}$ we denote the set of net bundles
(resp. presheaf bundles) with fibres isomorphic to ${\bf C}$ by 
\[
{\bf bun}^\uparrow(K,{\bf C})
\ \ , \ \
{\bf pbun}^\uparrow(K,{\bf C})
\ .
\]
The above notation should not give rise to confusion with ${\bf bun}(K,{\bf C})$ which indicates,
instead, the category of net bundles whose fibres are objects of ${\bf C}$.
As for ordinary nets, we have isomorphisms
\[
{\bf psheaf}^\uparrow(K,\Ccat) \simeq {\bf net}^\uparrow(K',\Ccat) 
\ \ , \ \
{\bf bun}^\uparrow(K,\Ccat)  \simeq {\bf pbun}^\uparrow(K,\Ccat) 
%
%
\ .
\]
For future convenience the next results are expressed in terms of presheaves,
but they could be given as well in terms of nets.
\begin{prop}[The presheaf bundle trick]
\label{prop.NET1}
Let $\efC := (C,r)^K$ be a presheaf of \sC categories. 
Then there is a canonical morphism
\[
P : {}_\beta \efC \ \to \ \efC \ ,
\]
such that ${}_\beta \efC = ({}_\beta C , {}_\beta r)^K$ is a presheaf bundle
and $\wt P : \wt{_\beta \efC} \to \wt \efC$ is an isomorphism.
\end{prop}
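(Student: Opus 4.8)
The plan is to build ${}_\beta\efC$ fibrewise from $\efC$ by "rigidifying" each restriction functor $r_{aa'}$ into an isofunctor, in the spirit of the mapping-cylinder or Grothendieck-construction trick: since each $r_{aa'}\colon C_{a'}\to C_a$ is a full and faithful \sC monofunctor, it identifies $C_{a'}$ with a full subcategory of $C_a$, but not with all of $C_a$; the point is to enlarge $C_{a'}$ (without changing its equivalence type) so that the enlarged functor becomes surjective on objects as well, hence an isofunctor. Concretely, fix a base point $a_0\in K$ and, for each $a\in K$, choose a path $\gamma_{a_0 a}\colon a\to a_0$ (a fixing of the path frame, as in the proof of Theorem~\ref{thm_NET00}). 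For $a\le a'$ the composite $r_{aa'}$ is already prescribed; the idea is to define ${}_\beta C_a$ to have as objects the formal symbols $(\varrho,a)$ where $\varrho$ ranges over $\obj C_{a_0}$ "transported to $a$" along the chosen path, with arrows $\big((\varrho,a),(\varsigma,a)\big):=(\varrho,\varsigma)_{C_{a_0}}$ reindexed, and ${}_\beta r_{aa'}$ the identity on such symbols. This manifestly yields a presheaf bundle ${}_\beta\efC=({}_\beta C,{}_\beta r)^K$ — all structure functors are \sC isofunctors — and a projection functor $P_a\colon {}_\beta C_a\to C_a$ sending $(\varrho,a)$ to the actual object of $C_a$ obtained by applying the appropriate string of $r$'s (equivalently, the image of $\varrho$ under the zig-zag of restriction functors along $\gamma_{a_0 a}$), and acting as the obvious bijection on arrow spaces. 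By construction $P_{a'}\circ{}_\beta r_{aa'}=r_{aa'}\circ P_a$ for $a\le a'$, so $P=\{P_a\}$ is a morphism $P\colon{}_\beta\efC\to\efC$.

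Next I would unwind what $\wt P\colon\wt{{}_\beta\efC}\to\wt\efC$ does on sections. A section of ${}_\beta\efC$ is a family $\{(\varrho_o,o)\}_{o\in K}$ compatible under the ${}_\beta r_{oo'}$, which are identities, so a section amounts to a single object of $C_{a_0}$, transported coherently; applying $P$ fibrewise gives a genuine section $\{P_o(\varrho_o,o)\}$ of $\efC$, and the arrow spaces match because each $P_o$ is a bijection on the relevant Banach spaces of arrows and the compatibility conditions correspond under these bijections. To see $\wt P$ is an isofunctor I would check: (i) faithfulness and fullness of $\wt P$ on each arrow space $(\varrho,\varsigma)$, which is immediate since $P_o$ is an isometric linear isomorphism $\big((\varrho_o,o),(\varsigma_o,o)\big)\to(P_o\varrho_o,P_o\varsigma_o)$ intertwining composition and $*$, and the "section" constraint $t_{o'}={}_\beta r_{o'o}(t_o)$ is carried precisely onto $t_{o'}=r_{o'o}(t_o)$; (ii) essential surjectivity — in fact bijectivity — on objects: given a section $\varsigma=\{\varsigma_o\}$ of $\efC$, its value $\varsigma_{a_0}\in\obj C_{a_0}$ determines the unique section $\{(\varsigma_{a_0},o)\}$ of ${}_\beta\efC$ mapping to it, using that the compatibility $\varsigma_{o'}=r_{o'o}(\varsigma_o)$ forces $\varsigma_o=P_o(\varsigma_{a_0},o)$ along the chosen paths. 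Hence $\wt P$ is a \sC isofunctor.

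The main obstacle I anticipate is \emph{well-definedness and canonicity} of the transport: the construction above secretly depends on the chosen path frame $\{\gamma_{a_0 a}\}$, and the honest statement is that the transported object "$\varrho$ at $a$" should be independent of the path up to the isomorphisms furnished by the $r$'s — but here the $r_{aa'}$ are only defined for $a\le a'$, not invertible, so "transport along a general path" is not literally available inside $\efC$ itself. The clean fix is to \emph{not} transport inside $\efC$ but to let ${}_\beta C_a$ carry a copy of $C_{a_0}$'s data only along the comparison maps that genuinely exist, i.e. to realize ${}_\beta\efC$ as the presheaf bundle whose holonomy (via the isomorphism ${\bf pbun}^\uparrow(K,\Ccat)\simeq{\bf bun}^\uparrow(K,\Ccat)$ and Theorem~\ref{thm_NET00}) is the trivial one — equivalently ${}_\beta\efC$ is the \emph{constant} presheaf bundle with fibre $C_{a_0}$ — and then to define $P$ using that each $r_{oo'}$ for $o\le o'$, being a full and faithful inclusion, admits a canonical comparison with the identity on the common subcategory. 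Making this "canonical" precise — in particular checking the required coherence $P_{o''}\circ{}_\beta r_{o''o'}\circ{}_\beta r_{o'o}=P_{o''}\circ{}_\beta r_{o''o}$ reduces to the presheaf relation $r_{oo'}\circ r_{o'o''}=r_{oo''}$ composed with $P_{o''}$ — is the only genuinely delicate bookkeeping; once it is set up, that $\wt P$ is an isofunctor follows formally from the fact that a section only ever sees the fibre over the base point together with the (injective) restriction maps, which $P$ reproduces faithfully.
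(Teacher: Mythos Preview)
Your construction has a genuine gap that you yourself flag but do not actually close. You want ${}_\beta C_a$ to be (essentially) a copy of $C_{a_0}$ for a fixed base point $a_0$, and $P_a\colon{}_\beta C_a\to C_a$ to be ``transport of $\varrho\in\obj C_{a_0}$ to $a$''. But the restriction functors $r_{aa'}$ only run from larger to smaller ($a\le a'$), so there is simply no map $C_{a_0}\to C_a$ available unless $a\le a_0$; for $a$ incomparable to $a_0$ or with $a_0\le a$ you would need preimages under the $r$'s, which need not exist or be unique. Your ``clean fix'' of declaring ${}_\beta\efC$ to be the constant bundle with fibre $C_{a_0}$ does not repair this: the morphism $P$ still has to land in $\efC$, and you have not produced the functors $P_a$. (Incidentally, you also assume each $r_{aa'}$ is full; the proposition does not assume this --- fullness is a separate hypothesis in the paper.)

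Even granting a definition of $P$, your argument for bijectivity of $\wt P$ on objects is circular. You claim that a section $\varsigma$ of $\efC$ is determined by $\varsigma_{a_0}$, but this is exactly what fails when $\efC$ is not already a bundle: two distinct sections can agree at $a_0$, and an object of $C_{a_0}$ need not extend to a global section at all. So the map $\obj\wt\efC\to\obj C_{a_0}$ is in general neither injective nor surjective, and $\wt P$ cannot be an isofunctor.

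The paper avoids all of this by taking the opposite tack: rather than fixing a base fibre and transporting, it takes $\obj{}_\beta C_a := \obj\wt\efC\times\{a\}$, i.e.\ the objects of ${}_\beta C_a$ \emph{are} the global sections of $\efC$ (tagged by $a$), and $P_a(\varrho,a):=\varrho_a$ is just evaluation --- automatically well-defined, no paths needed. The restriction ${}_\beta r_{aa'}$ is then the identity on section-labels, hence bijective on objects for free, and $\wt P$ is tautologically a bijection on objects. For arrows one must still force bijectivity of ${}_\beta r_{aa'}$: the paper sets ${}_\beta(\varrho^a,\varsigma^a)$ to be the subspace of $(\varrho_a,\varsigma_a)$ consisting of arrows that extend to sections over the upward cone $\{o:o\ge a\}$; this is precisely the image of every $r_{ae}$ with $e\ge a$, so the restrictions become bijective on arrows as well.
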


\begin{proof}
To make $r$ bijective on the arrows we select a suitable subpresheaf of $\efC$.
To this end we consider the \sC category $\wt \efC_{\geq a}$ of sections of the restriction 
$\efC_{\geq a}$, $a \in K$,
and note that there is an evaluation functor
\[
\pi_a : \wt \efC_{\geq a} \to C_a
\ \ , \ \
\varrho \mapsto \varrho_a
\ , \
t \mapsto t_a
\ \ , \ \
\forall \varrho , \varsigma \in \obj \wt \efC_{\geq a}
\ , \
t \in (\varrho,\varsigma)
\ .
\]
We set
$_\phi C_a := \pi_a(\wt \efC_{\geq a})$, $\forall a \in K$.
Let $e \leq a$. By definition an arrow of $_\phi C_e$ is of the type
$t_e \in (\varrho_e,\varsigma_e)$,
where
$\varrho,\varsigma \in \obj \wt \efC_{\geq e}$ and $t \in (\varrho,\varsigma)$. 
But
$t_e = r_{ea}(t_a)$
and by definition $t_a \in {}_\phi (\varrho_a,\varsigma_a)$; this implies that 
$r_{ea}$ is a surjection on ${}_\phi (\varrho_e,\varsigma_e)$, $\forall e \leq a$ 
(note that $r_{ea}$ is also injective by definition),
and we define $_\phi r_{ea}$ as the restriction of $r_{ea}$ to $_\phi C_a$.
By construction the functor $_\phi r_{ea}$ is full and $_\phi \efC$ is a subpresheaf of $\efC$.

The functors $_\phi r_{ea}$ may be not bijective on the objects, 
so we define ${}_\beta C_a$, $a \in K$, as the \sC category
\[
\obj {}_\beta C_a := \obj \wt \efC \times \{ a \} \ni \varrho^a \equiv (\varrho,a)
\ \ , \ \
{}_\beta(\varrho^a,\varsigma^a) := \ _\phi (\varrho_a,\varsigma_a) \ .
\]
Then we add the presheaf structure
\[
_\beta r_{aa'} : {}_\beta C_{a'} \to {}_\beta C_a
\ \ , \ \
_\beta r_{aa'}\varrho^{a'} := \varrho^a
\ \ , \ \
_\beta r_{aa'}(f) := r_{aa'}(f)
\ \ , \ \
\forall a \leq a'
\ , \
f \in {}_\beta(\varrho^{a'},\varsigma^{a'}) 
\ .
\]
By definition ${}_\beta r_{aa'}$ is bijective on the objects, and any
\[
_\beta r_{aa'} : {}_\beta(\varrho^{a'},\varsigma^{a'}) \to {}_\beta(\varrho^a,\varsigma^a) \ \ , \ \ a \leq a' \ ,
\]
is bijective because $r_{aa'}$ is bijective on
$_\phi (\varrho_a,\varsigma_a) = {}_\beta(\varrho^a,\varsigma^a)$.
This proves that ${}_\beta \efC$ is a presheaf bundle.
We now define the presheaf morphism
\[
P : {}_\beta \efC \to \ \efC
\ \ , \ \
P_a(\varrho^a) := \varrho_a
\ , \
P_a(f) := f
\ \ , \ \
\forall \varrho^a \in \obj _\beta C_a
\ , \ 
f \in {}_\beta(\varrho^a,\varsigma^a)
\ , \
a \in K
\ ,
\]
inducing the *-functor 
$\wt P : \wt{_\beta \efC} \to \wt \efC$.
Now, sections $\wt \varrho \in \obj \wt{_\beta \efC}$ are of the type 
$\wt \varrho_a = \varrho^a$, $\forall a \in K$,
for some fixed $\varrho \in \obj \wt \efC$, so $\wt P$ is bijective on the objects.
Passing to arrows, we note that if 
$t \in (\varrho,\varsigma)$, $\varrho,\varsigma \in \obj \wt \efC$,
then $t_a \in {}_\phi(\varrho_a,\varsigma_a) = ( \varrho , \varsigma )_a$ for all $a \in K$,
so $\wt P$ is an isomorphism as desired. 
\end{proof}

We add further structure. 
We say that a presheaf $\efT = (T,r)^K$ of tensor \sC categories $(T_o)_{\otimes_o}$, $o \in K$, 
is a \emph{tensor presheaf} whenever
\[
r_{oo'} \circ \otimes_{o'} \ = \ \otimes_o \circ (r_{oo'} \times r_{oo'})
\ \ , \ \
\forall o \leq o'
\ .
\]
We say that $\efT$ has \emph{simple units} whenever $(\iota_o,\iota_o) \simeq \bC$ for all $o \in K$, where $\iota_o \in \obj T_o$ is the identity object of $T_o$.
The tensor structure shall be emphasized with the notation $\efT_\otimes$ and morphisms $\phi : \efT \to \efT'$ such that 
$\phi_o \circ \otimes_o = \otimes_o' \circ (\phi_o \times \phi_o)$, $o \in K$,
are denoted by $\phi : \efT_\otimes \to \efT'_{\otimes'}$.

\

The following example is motivated by the analysis of superselection structures in algebraic quantum field theory
on generic spacetimes.
\begin{ex}{\bf Nets of tensor categories defined by nets of \sC algebras, \cite[\S 27]{Rob}.}
\label{ex.Rob}
{\it
Let $\efR = (R,\jmath)_K$ be a net of \sC algebras on the Hilbert space $H$,
that is, any $R_o \subset B(H)$ is a unital \sC algebra and any $\jmath_{o'o}$, $o \leq o'$,
is the inclusion map. For any $a \in K$ we consider the \sC algebra 
\[
R^a \, := \, C^* \{ R_o  : o > a \} \, \subseteq B(H)
\]
(note that $R_a \subseteq R^a$)
and, given $\rho,\sigma \in {\bf end}R^a$, the space $(\rho,\sigma) \subseteq R^a$ 
defined as in (\ref{eq.intro0}). We define the tensor category
\[
T_a := 
\left\{
\begin{array}{ll}
\obj T_a \ := \ \{ \rho \in {\bf end}R^a : \rho(R_o) \subseteq R_o , \forall o > a \} \ ,
\\
{\bf arr}T_a \ := \ \{ (\rho,\sigma)_a := R_a \cap (\rho,\sigma) \ , \ \forall \rho,\sigma \in \obj T_a  \} \ ,
\end{array}
\right.
\]
endowed with the tensor product (\ref{eq.intro1}).
If $a \leq a'$ then $R^{a'} \subseteq R^a$ and any $\rho \in \obj T_a$ restricts
to an endomorphism $\jmath_{a'a}\rho \in \obj T_{a'}$. Moreover there are obvious inclusions
\[
\jmath_{a'a} : (\rho,\sigma)_a \to ( \jmath_{a'a}\rho , \jmath_{a'a}\sigma )_{a'}
\ \ , \ \
\rho,\sigma \in \obj T_a \ ,
\]
and this yields the net $\efT = (T,\jmath)_K$.
It is easily seen that $\jmath$ preserves the tensor structure, so $\efT_\otimes$ is
a tensor net. The identity of $T_a$ is the identity automorphism $\iota_a \in {\bf end}R^a$ and
$(\iota_a,\iota_a)_a = R_a \cap (R^a)'$.
}
\end{ex}

We say that the tensor presheaf $\efT_\otimes$ is \emph{symmetric} whenever 
any $(T_o)_{\otimes_o}$ has symmetry $\vareps_o$, and
\begin{equation}
\label{eq.NET1}
\vareps_o( r_{oo'}\varrho_{o'} , r_{oo'}\varsigma_{o'} )
\ = \
r_{oo'} (\vareps_{o'}(\varrho_{o'},\varsigma_{o'}))
\ \ , \ \qquad
\forall o \leq o'
\ , \
\varrho_o \in \obj T_o \ , \ \varsigma_o \in \obj T_o \ .
\end{equation}
The symmetry structure is emphasized with the notation $\efT_{\otimes,\vareps}$. Morphisms $\phi : \efT_\otimes \to \efT'_{\otimes'}$ such that $\efT'$ has symmetry $\vareps'$ and 
$\phi_o(\vareps(\varrho_o,\varsigma_o)) = \vareps'( \phi_o \varrho_o , \phi_o \varsigma_o)$, 
for all
$o \in K$,$\varrho_o , \varsigma_o \in \obj T_o$,
are denoted by
$\phi : \efT_{\otimes,\vareps} \to \efT'_{\otimes',\vareps'}$.

\begin{rem}
\label{rem.NET1}
Let us define, for any $\varrho , \varsigma \in \obj \wt \efT$,
\[
\vareps_{\varrho \varsigma , o} := \vareps_o(\varrho_o,\varsigma_o)
\ \ , \ \
\forall o \in K
\ .
\]
Then (\ref{eq.NET1}) implies that $\vareps_{\varrho \varsigma} \in {\bf arr} \ \wt \efT$
and this makes $\wt \efT$ a symmetric tensor \sC category (endowed with the obvious tensor structure).
Prop.\ref{prop.NET1} applies with the further property that ${}_\beta \efT_{\otimes,\vareps}$ 
is a symmetric tensor presheaf bundle.
\end{rem}

Applying Theorem \ref{thm_NET00} to the category ${\bf T}$ with objects 
small symmetric tensor \sC categories and arrows symmetric tensor *-functors, we obtain an equivalence
${\bf hom}( \pi_1(K) , {\bf T} ) \simeq {\bf bun}^\uparrow(K,{\bf T})$
from which it follows, given the symmetric tensor category $T_{\otimes,\eps}$, the one-to-one correspondence
\begin{equation}
\label{eq_LTGA1}
\unl{\hom}( \pi_1(K) , {\bf aut}T_{\otimes,\eps} )
\ \simeq \
\unl{\bf bun}^\uparrow(K,T_{\otimes,\eps})
\ .
\end{equation}

\

\noindent {\bf Hilbert presheaves} are, by definition, symmetric tensor full presheaves with fibres 
\emph{full} symmetric tensor subcategories of ${\bf Hilb}$
{\footnote{Note that we assume, in particular, that all the fibres of a Hilbert presheaf are endowed with the
           standard symmetry given by the flip operators 
           $\vartheta_{h \otimes h'}(v \otimes v') := v' \otimes v$, $v \in h$, $v' \in h'$.}}.
If $\efC_{\otimes,\vartheta} = (C,R,\otimes,\vartheta)^K$ is a Hilbert presheaf then any 
$h \in \obj C_a$, $a \in K$,
is a Hilbert space and, in particular, we have the identity object $\iota_a \equiv \bC$. 
We have canonical, bijective linear maps
\[
h \to (\iota_a,h)
\ \ , \ \qquad
v \mapsto v_\bullet \ : \ v_\bullet(\lambda) := \lambda  v \ , \ v \in H \, , \, \lambda \in \bC \ ,
\]
with inverses
$(\iota_a,h) \to h$, $f \mapsto f_\bullet := f(1)$ (so $v_{\bullet \bullet} = v$ for all $v \in h$).
Let $\varkappa \in \obj \wt \efC$. Since $\efC$ is full, for any $a \leq a'$ the functor $R_{aa'}$
defines a bijective, isometric linear map
$R_{aa'} : ( \iota_{a'} , \varkappa_{a'} ) \to ( \iota_a , \varkappa_a )$,
so we have the unitary operator
\begin{equation}
\label{eq_LTGA2}
R^\varkappa_{aa'} : \varkappa_{a'} \to \varkappa_a
\ \ : \ \
R^\varkappa_{aa'}v' := (R_{aa'}(v'_\bullet))_\bullet
\ , \ 
\forall v' \in \varkappa_{a'}
\ .
\end{equation}
Since $R$ fulfils the presheaf relations, the family $R^\varkappa := \{ R^\varkappa_{aa'} \}$ 
fulfils the presheaf relations. For convenience we set 
$R^\varkappa_{a'a} := (R^\varkappa_{aa'})^{-1}$, $\forall a \leq a'$,
so $R^\varkappa_{a''a'} \circ R^\varkappa_{a'a} = R^\varkappa_{a''a}$, $\forall a \leq a' \leq a''$,
and
$\efH_\varkappa = (\varkappa,R^\varkappa)_K$
is a Hilbert net bundle.

Let now $\varkappa , \varkappa' \in \obj \wt \efC$ and $a \leq a'$.
Since $R$ preserves tensor products, by (\ref{eq_LTGA2}) we find
\begin{equation}
\label{eq_LTGAa}
R^{\varkappa \otimes \varkappa'}_{aa'}
\ = \
R^\varkappa_{aa'} \otimes R^{\varkappa'}_{aa'}
\ .
\end{equation}
Let $f' \in (\varkappa_{a'} , \varkappa'_{a'})$ (so $f'$ is a linear map from $\varkappa_{a'}$ to $\varkappa'_{a'}$);
then for any $v \in \varkappa_a$ we find
\[
\begin{array}{lll}
\{ R_{aa'}(f') \} v & = 
\{ R_{aa'}(f') \} ( R_{aa'}( R^\varkappa_{a'a}v )_\bullet )  =  \\ & =
R_{aa'} ( f' \circ ( R^\varkappa_{a'a}v )_\bullet ) =
R_{aa'} ( \{ f' \circ R^\varkappa_{a'a} \} v )_\bullet =
\{ R^{\varkappa'}_{aa'} \circ f' \circ R^\varkappa_{a'a} \} v
\ ,
\end{array}
\]
that is,
\begin{equation}
\label{eq.LTGA2}
R_{aa'}(f') \ = \ R^{\varkappa'}_{aa'} \circ f' \circ R^\varkappa_{a'a}
\ \ , \ \
\forall a \leq a' \ .
\end{equation}
If $t \in (\varkappa,\varkappa')$ then 
\[
t_a \ = \ 
R_{aa'}(t_{a'}) \ \stackrel{ (\ref{eq.LTGA2}) }{=} \ 
R^{\varkappa'}_{aa'} \circ t_{a'} \circ R^\varkappa_{a'a}
\ \ , \ \
\forall a \leq a' \ .
\]
The previous equalities show that $t \in (\varkappa,\varkappa')$ if, and only if, 
$t \in ( \efH_\varkappa , \efH_{\varkappa'} )$,
and we conclude that
\begin{equation}
\label{lem_LTGA2}
\wt \efC \to {\bf bun}(K,{\bf Hilb})
\ \ , \ \
\varkappa \mapsto \efH_\varkappa 
\ \ , \ \
t \mapsto t
\ ,
\end{equation}
is an embedding. Simple computations show that (\ref{lem_LTGA2}) preserves tensor product and symmetry.

\section{Nets of \sC algebras and group duals.}
\label{sec_end}

In the present section we focus on \sC net bundles (\cite{BR08,RV11}).
In particular, we give examples of group actions $\alpha : G \to {\bf aut}F$ such that 
there are \sC net bundles $\efA$ with fibre $F^\alpha$ that do not uniquely determine, or do not determine at all,
a \sC net bundle with fibre $F$ having $\efA$ as fixed-point net.
The basic ingredient of our constructions is the Cuntz algebra (\cite{Cun77}).

\

\noindent {\bf Nets of \sC dynamical systems.} Let $(A,\rho)$ be a \sC dynamical system. 
Then we have a tensor \sC category $\wa \rho$ defined as the full tensor \sC subcategory
of ${\bf end}A$ with objects the tensor powers
%
%
$\rho^r := \rho \circ \ldots \circ \rho$, $r \in \bN$
(see (\ref{eq.intro1})).
We say that $\rho$ is symmetric whenever $\wa \rho$ has symmetry $\eps$ 
(see \S \ref{sec_back} or \cite[Eq.4.5-4.7]{DR87}),
and in this case we say that $(A,\rho,\eps)$ is a \emph{symmetric \sC dynamical system}.
Using elementary combinatorics one can prove that any 
$\eps_{\rho^r,\rho^s}$, $r,s \in \bN$,
is the product of elements of the type 
$\rho^s(\eps_{\rho^2,\rho^2})$, $s=0,1, \ldots$ .
Thus $\eps$ is determined by $\eps_{\rho^2,\rho^2} \in (\rho^2,\rho^2)$
(see \cite[Eq.2.6,Eq.4.8]{DR87}), and, with an abuse of notation, we write 
$\eps \equiv \eps_{\rho^2,\rho^2} \in UA$.
If $\alpha \in {\bf aut}_{\rho,\eps}A$ then,
for all $r,s \in \bN$ and $t \in (\rho^r,\rho^s)$, $t' \in (\rho^{r'},\rho^{s'})$, 
we have
\[
\left\{
\begin{array}{ll}
\alpha \circ \rho^r = \rho^r \circ \alpha \ ,
\\
\alpha(t) \rho^r(v) = 
\alpha(t \cdot \rho^r \circ \alpha^{-1}(v)) = 
\alpha( \rho^s \circ \alpha^{-1}(v) \cdot t) =
\rho^s(v) \cdot \alpha(t) 
\ \Rightarrow \
\alpha(t) \in (\rho^r,\rho^s)
\ ,
\\
\alpha(t \otimes t') = 
\alpha(t\rho^r(t')) = 
\alpha(t) \rho^r(\alpha(t')) =
\alpha(t) \otimes \alpha(t') \ ,
\\
\alpha \circ \rho^r (\eps) = \rho^r(\eps) 
\ \Rightarrow \
\alpha(\eps_{\rho^r,\rho^s}) = \eps_{\rho^r,\rho^s}
\ .
\end{array}
\right.
\]
The previous equalities say that $\alpha$ defines a symmetric tensor automorphism 
$\wa \alpha$ of $(\wa \rho,\otimes,\eps)$.
We have the following result, whose proof is given in \cite[Theorem 1]{Vas09}:
\begin{prop}
\label{prop_LTGA1a}
Let $A$ be a \sC algebra and $\rho \in {\bf end}A$ with symmetry $\eps$.
If the vector space generated by $\cup_{rs}(\rho^r,\rho^s)$ is dense in $A$
in the norm topology, then every $\beta \in {\bf aut}(\wa \rho,\otimes,\eps)$
is of the type $\beta=\wa \alpha$, $\alpha \in {\bf aut}_{\rho,\eps}A$.
\end{prop}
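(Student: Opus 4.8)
The plan is to reconstruct $\alpha$ from $\beta$ fibrewise on the dense subspace spanned by the intertwiner spaces, and then show the result is a well-defined $*$-endomorphism lying in ${\bf aut}_{\rho,\eps}A$ with $\wa\alpha=\beta$. First I would record the structural data: since $\beta\in{\bf aut}(\wa\rho,\otimes,\eps)$, it fixes the identity object $\iota$, hence $\beta(\rho^r)=\rho^r$ for all $r$ (an autofunctor of $\wa\rho$ must permute the objects, but the only objects are the tensor powers $\iota,\rho,\rho^2,\dots$ and $\beta$ must preserve the tensor grading since $\rho^r\otimes\rho^s=\rho^{r+s}$, so $\beta(\rho^r)=\rho^r$); moreover $\beta$ restricts to linear isometric bijections $\beta:(\rho^r,\rho^s)\to(\rho^r,\rho^s)$ compatible with composition, $*$, tensor product, and with $\beta(\eps)=\eps$.

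Next I would define $\alpha$ on the dense subspace $A_0:=\mathrm{span}\bigcup_{r,s}(\rho^r,\rho^s)\subseteq A$ simply by $\alpha(t):=\beta(t)$ for $t\in(\rho^r,\rho^s)$, extended linearly. The key point is \emph{well-definedness}: a single element of $A_0$ may admit several expressions as a finite sum of intertwiners from different spaces $(\rho^r,\rho^s)$, and one must check these all give the same value under $\beta$. Here I would use that the spaces $(\rho^r,\rho^s)$ are nested/compatible via multiplication by isometries from the Cuntz-type generators — more precisely, that inside $A$ one has the relations coming from $\rho$ (e.g.\ $(\rho^r,\rho^s)\cdot(\rho^{r'},\rho^{s'})\subseteq(\rho^{r+r'},\rho^{s+s'})$ via $t\otimes t'=t\rho^r(t')$, and compatibility maps embedding $(\rho^r,\rho^s)$ into $(\rho^{r+1},\rho^{s+1})$), so that any linear dependence among intertwiners is already a linear dependence ``at a common level'', on which $\beta$ is a genuine linear map. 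Having well-definedness, I would verify that $\alpha$ is multiplicative and $*$-preserving on $A_0$ — multiplicativity on intertwiners $t\in(\rho^r,\rho^s)$, $s\in(\rho^{s},\rho^{u})$ follows from $\beta$ respecting categorical composition, and on a product $t\cdot t'$ of intertwiners at different levels one passes to a common level using the tensor/compatibility structure; $*$-preservation is $\beta(t^*)=\beta(t)^*$. Then $\alpha$ is a $*$-homomorphism on $A_0$, hence contractive, hence extends uniquely to a $*$-endomorphism $\alpha\in{\bf end}A$; applying the same argument to $\beta^{-1}$ gives an inverse, so $\alpha\in{\bf aut}A$.

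Finally I would check $\alpha\in{\bf aut}_{\rho,\eps}A$ and $\wa\alpha=\beta$. For the intertwining relation $\alpha\circ\rho=\rho\circ\alpha$: both sides are $*$-endomorphisms, so it suffices to check on $A_0$, and for $t\in(\rho^r,\rho^s)$ one has $\rho(t)\in(\rho^{r+1},\rho^{s+1})$ with $\rho$ acting as the shift built from $\eps$ (cf.\ the DR formalism), while $\beta(\rho(t))=\rho(\beta(t))$ because $\beta$ commutes with the functor $\rho\otimes(-)$ up to the symmetry $\eps$ which $\beta$ fixes; concretely $\rho(t)=\eps_{\rho,\rho^s}\circ(1_\rho\otimes t)\circ\eps_{\rho^r,\rho}$ or a similar expression, and $\beta$ preserves each ingredient. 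That $\alpha(\eps)=\eps$ is immediate since $\eps\in(\rho^2,\rho^2)$ and $\beta(\eps)=\eps$. Lastly $\wa\alpha=\beta$ holds by construction: $\wa\alpha$ acts on objects as the identity ($=\beta$) and on an arrow $t\in(\rho^r,\rho^s)$ by $\wa\alpha(t)=\alpha(t)=\beta(t)$. The main obstacle I expect is precisely the well-definedness of $\alpha$ on $A_0$ — showing that the purely algebraic identification of an element of $A$ with a finite sum of intertwiners is compatible with $\beta$, i.e.\ that $\beta$ glues consistently across the different arrow-spaces; this is where the density hypothesis and the detailed combinatorics of the $\eps$-twisted embeddings of $(\rho^r,\rho^s)$ inside $A$ (as in \cite[Eq.2.6, Eq.4.8]{DR87}) do the real work.
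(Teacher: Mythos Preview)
The paper does not prove this proposition at all: the sentence preceding the statement says ``whose proof is given in \cite[Theorem 1]{Vas09}'', and no argument appears in the present paper. So there is no in-paper proof to compare against; your outline is the natural construction and is presumably close in spirit to what that reference does.

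That said, there is a real gap in your well-definedness argument, and your own closing paragraph does not quite close it. The embeddings $(\rho^r,\rho^s)\hookrightarrow(\rho^{r+k},\rho^{s+k})$, $t\mapsto t\otimes 1_{\rho^k}=t$, let you bring intertwiners with the \emph{same} degree $s-r$ to a common hom-space, on which $\beta$ is linear; but a linear relation $\sum_i t_i=0$ in $A$ can involve $t_i$ of \emph{different} degrees, and these cannot be pushed into a single $(\rho^R,\rho^S)$ by tensoring with identities. One then needs a reason why distinct-degree components are independent in $A$ (in the Cuntz-type examples this comes from the gauge $\bT$-action, but the bare hypotheses here do not hand you that), or a different device; your phrase ``Cuntz-type generators'' suggests you may be tacitly assuming such extra structure. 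Two smaller points: (i) the algebra product $t\cdot t'$ is \emph{not} $t\otimes t'=t\rho^r(t')$; the right move is to embed $t,t'$ (via $\,\cdot\,\otimes 1_{\rho^k}$) into hom-spaces with matching source/target so that $t\cdot t'$ becomes a categorical composition, whence $\beta(t\cdot t')=\beta(t)\cdot\beta(t')$; (ii) for $\alpha\circ\rho=\rho\circ\alpha$ you do not need any $\eps$-twisting: $\rho(t)=1_\rho\otimes t$, so $\beta(\rho(t))=1_\rho\otimes\beta(t)=\rho(\beta(t))$ directly.
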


As a consequence of the previous proposition and (\ref{eq_LTGA1}), when $A$ is generated by 
$\cup_{rs}(\rho^r,\rho^s)$ 
we have the one-to-one correspondence
\begin{equation}
\label{eq_LTGA1a}
\unl{\bf bun}^\uparrow(K,\wa \rho_{\otimes,\eps})
\ \simeq \ 
\unl{\hom}( \pi_1(K) , {\bf aut}_{\rho,\eps}A )
\ .
\end{equation}
We now interpret the r.h.s. of (\ref{eq_LTGA1a}) in terms of \sC net bundles.
To this end, let $\efA = (A,\jmath)_K$ be a \sC net bundle and $\varrho \in {\bf end}\efA$;
then any $\varrho_a \in {\bf end}A_a$, $a \in K$, defines the tensor category 
$\wa \varrho_a$
of tensor powers of $\varrho_a$, and using the relations 
\begin{equation}
\label{eq.SE01}
\jmath_{a'a} \circ \varrho_a \circ \jmath_{aa'} = \varrho_{a'}
\ \ , \ \
\forall a \leq a'
\ ,
\end{equation}
we conclude that defining for all $a \leq a'$
\begin{equation}
\label{eq.SE01a}
\wa \jmath_{a'a}(\varrho_a^r) \, := \, \varrho_{a'}^r
\ , \
\wa \jmath_{a'a}(t) := \jmath_{a'a}(t)
\ \ , \ \qquad
\forall r,s \in \bN
\ , \
t \in (\varrho_a^r,\varrho_a^s)
\ ,
\end{equation}
yields tensor isomorphisms $\wa \jmath_{a'a} : \wa \varrho_a \to \wa \varrho_{a'}$ and the 
net bundle of tensor \sC categories
$(\wa \varrho,\wa \jmath,\otimes)_K$.
By (\ref{eq.SE01},\ref{eq.SE01a}), $\varrho$ is a section of $\wa \varrho$.
For the symmetry structure we prove the following result.
\begin{thm}
\label{thm_NET03}
Let $( A_* , \rho , \eps )$ be a pointed \sC dynamical system. Given a \sC net bundle $\efA = (A,j;A_*)_K$, 
the following are equivalent: 
(1) There are $\varrho \in {\bf end} \efA$ and $\vareps \in \wt \efA$ such that
$\varrho_a = \rho$, $\vareps_a = \eps$;
(2) The structure group of $\efA$ admits a reduction 
$\chi : \pi_1(K) \to {\bf aut}_{\rho , \eps } A_*$.
\end{thm}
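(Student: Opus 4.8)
The plan is to establish the two implications $(2)\Rightarrow(1)$ and $(1)\Rightarrow(2)$ separately, using Theorem~\ref{thm_NET00} to translate between net bundles and holonomy representations, and the discussion preceding the statement (on $\wa\varrho$, the relations (\ref{eq.SE01},\ref{eq.SE01a}), and the autofunctor $\wa\alpha$ attached to $\alpha\in{\bf aut}_{\rho,\eps}A_*$) to handle the dynamical-system structure.

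For $(2)\Rightarrow(1)$: suppose $\chi:\pi_1(K)\to{\bf aut}_{\rho,\eps}A_*$ is a reduction of the structure group of $\efA$, so that (after a choice of standard fibre $A_a\equiv A_*$ and identifications $i_o\in(A_o,A_*)$) the holonomy of $\efA$ factors through ${\bf aut}_{\rho,\eps}A_*\subseteq{\bf aut}A_*$. I would then transport $\rho$ and $\eps$ to each fibre by setting $\varrho_o:=i_o^{-1}\circ\rho\circ i_o$ and $\vareps_o:=i_o^{-1}(\eps)$. The point is that since each transition isomorphism $\jmath_{o'o}$, read through the $i_o$'s, equals $\chi$ evaluated on an appropriate path (as in (\ref{eq_nets03})), and since $\chi$ takes values in ${\bf aut}_{\rho,\eps}A_*$ — i.e.\ commutes with $\rho$ and fixes $\eps$ — the compatibility relations $\jmath_{o'o}\circ\varrho_o=\varrho_{o'}\circ\jmath_{o'o}$ and $\jmath_{o'o}(\vareps_o)=\vareps_{o'}$ follow immediately; this is exactly the statement that $\varrho\in{\bf end}\efA$ and $\vareps\in\wt\efA$, with $\varrho_a=\rho$, $\vareps_a=\eps$. (One should also check the $\vareps_o$ genuinely give the symmetry of $\wa\varrho_o$ fibrewise, but this is automatic since each $i_o$ is an isomorphism of $C^*$-algebras intertwining $\varrho_o$ with $\rho$, hence carrying $\wa\rho$ with its symmetry $\eps$ isomorphically onto $\wa\varrho_o$.)

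For $(1)\Rightarrow(2)$: given $\varrho\in{\bf end}\efA$ and $\vareps\in\wt\efA$ with $\varrho_a=\rho$, $\vareps_a=\eps$, write out the holonomy representation $\chi:\pi_1(K)\to{\bf aut}A_*$ of $\efA$ (with base fibre $a$), built from the flat connection (\ref{eq_FC01}--\ref{eq_FC04}). Each $Z(b)=\jmath_{\bo,|b|}\circ\jmath_{|b|,\bl}$ is an isomorphism of $C^*$-algebras, and by (\ref{eq.SE01}) — applied to each leg $\jmath_{\bo,|b|}$, $\jmath_{|b|,\bl}$ — it intertwines $\varrho_{\bl}$ with $\varrho_{\bo}$; likewise, since $\vareps$ is a section, $Z(b)(\vareps_{\bl})=\vareps_{\bo}$. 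Composing along a loop $\gamma:a\to a$ and passing to the identifications that make all fibres equal to $A_*$, one gets that $\dot\chi(\gamma)=Z(\gamma)$ commutes with $\varrho_a=\rho$ and fixes $\vareps_a=\eps$, i.e.\ $\dot\chi(\gamma)\in{\bf aut}_{\rho,\eps}A_*$. Since $\chi$ is induced by $\dot\chi$ on homotopy classes (the argument of \cite[Lemma~2.6]{Ruz05}), $\chi$ itself takes values in ${\bf aut}_{\rho,\eps}A_*$, and by definition this is a reduction of the structure group of $\efA$ to ${\bf aut}_{\rho,\eps}A_*$.

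The main obstacle — really the only delicate point — is the bookkeeping of base points and path frames in $(1)\Rightarrow(2)$: one must make sure that "$\varrho$ is a section of $\wa\varrho$ and $\vareps\in\wt\efA$" does yield the \emph{pointwise} commutation $Z(b)\circ\varrho_{\bl}=\varrho_{\bo}\circ Z(b)$ at the level of the $1$-simplices $b$, and that this survives composition $Z(\gamma)=Z(b_n)\circ\cdots\circ Z(b_1)$ and passage to $\pi_1(K)$ without picking up spurious inner automorphisms; the telescoping is clean precisely because (\ref{eq.SE01}) holds for \emph{every} $\jmath_{o'o}$ and its inverse, so it is really just a careful chase through (\ref{eq_FC01}--\ref{eq_FC04}) using (\ref{eq.SE01a}). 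The symmetry part $\vareps\in\wt\efA\Rightarrow Z(b)(\vareps_{\bl})=\vareps_{\bo}$ is an immediate consequence of the definition of a section, so it adds no difficulty beyond the endomorphism part.
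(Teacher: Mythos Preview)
Your proof is correct and follows essentially the same approach as the paper. The $(1)\Rightarrow(2)$ direction is identical (iterate (\ref{eq.SE01}) and the section property along the $1$-simplices of a loop to see $\dot\chi(\gamma)\in{\bf aut}_{\rho,\eps}A_*$); for $(2)\Rightarrow(1)$ the paper transports $\rho,\eps$ along arbitrary paths $Z(\gamma):A_a\to A_o$ and checks path-independence explicitly, whereas you fix a path frame via the trivializations $i_o$ and invoke (\ref{eq_nets03}) --- this is the same construction in slightly different packaging, and your parenthetical about $\vareps_o$ being a symmetry is not needed for the theorem as stated (only $\vareps\in\wt\efA$ is required).
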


\begin{proof}
(1) $\Ra$ (2): By (\ref{eq_FC01},\ref{eq_FC03},\ref{eq_FC04}), we have the map
\[
\dot{\chi}(\gamma) := 
\jmath_{\partial_0 b_n,|b_n|} \circ \jmath_{|b_n|,\partial_1 b_n}
\circ \cdots \circ
\jmath_{\partial_0 b_1,|b_1|} \circ \jmath_{|b_1|,\partial_1 b_1}
\ ,
\]
$\gamma := b_n * \cdots * b_1 \in K(a)$, which yields the holonomy representation of $\efA$. Since $\rho$ preserves the net structure, recalling that $\partial_1 b_1 = a$ we find
\[
\jmath_{|b_1|,\partial_1 b_1} \circ \rho 
= 
\rho_{|b_1|} \circ \jmath_{|b_1|,\partial_1 b_1}
\ \ , \ \
\jmath_{\partial_0 b_1,|b_1|} \circ \rho_{|b_1|}
=
\rho_{\partial_1 b_2} \circ \jmath_{\partial_0 b_1,|b_1|}
\ .
\]
Iterating the above identities for all the 1--simplices of $\gamma$, we conclude that $\dot{\chi} (\gamma) \circ \rho = \rho \circ \dot{\chi} (\gamma)$. In the same way, if $\vareps \in \wt \efA$ is such that $\vareps_a = \eps$, then
\[
\jmath_{|b_1|,\partial_1 b_1} (\eps) = \eps (|b_1|)
\ \ , \ \
\jmath_{\partial_0 b_1,|b_1|} \circ \eps (|b_1|) = \eps (\partial_1 b_2)
\ ,
\]
and we conclude that
$\{ \dot{\chi} (\gamma) \}  (\eps) = \eps$.
(2) $\Ra$ (1): We make the identification $A_* \equiv A_a$, so $\rho \in {\bf end}A_a$, $\eps \in A_a$. By hypothesis, for any path $\gamma := b_n * \cdots * b_1 \in K(a)$ we find
\begin{equation}
\label{eq_thm_NET03B}
\dot{\chi} (\gamma) \circ \rho = \rho \circ \dot{\chi} (\gamma)
\ \ , \ \
\{ \dot{\chi} (\gamma) \} (\eps) = \eps
\ ,
\end{equation}
where $\dot{\chi}(\gamma) \in {\bf aut}A_*$ is defined by (\ref{eq_FC04}). 
So, if $\gamma_1 , \gamma_2 \in K(a,o)$, then 
$\dot{\chi} (\gamma_2^{-1}*\gamma_1) \circ \rho 
= 
\rho \circ \dot{\chi} (\gamma_2^{-1}*\gamma_1)$
and
$\{ \dot{\chi} (\gamma_2^{-1}*\gamma_1) \} (\eps) = \eps$.
This implies, recalling (\ref{eq_FC04A}), that
\[
Z(\gamma_1) \circ \rho \circ Z(\gamma_1)^{-1}
= 
Z(\gamma_2) \circ \rho \circ Z(\gamma_2)^{-1}
\ , \
\{ Z(\gamma_1) \} (\eps) = \{ Z(\gamma_2) \} (\eps)
\ .
\]
The previous equalities imply that the following maps are well-defined, as they do not depend on the choice of $\gamma : a \to o$:
\[
\varrho_o := Z(\gamma) \circ \rho \circ Z(\gamma)^{-1}
\ \ , \ \
\vareps_o := \{ Z(\gamma) \} (\eps)
\ \ , \ \
o \in K
\ .
\]
Now, for each $o \leq o'$ we compute
\[
\varrho_{o'} \circ \jmath_{o'o} = 
Z(\gamma) \circ \rho \circ Z(\gamma)^{-1} \circ \jmath_{o'o} =
\jmath_{o'o} \circ Z(\gamma') \circ \rho \circ Z(\gamma')^{-1} =
\jmath_{o'o} \circ \varrho_o
\ ,
\]
where $\gamma' := \gamma * b_{o'o} \in K(a,o')$ (the notation $b_{o'o}$ is the one used in (\ref{eq_nets02})). This proves that $\varrho := \{ \varrho_o \}$ is a well-defined endomorphism of $\efA$. 
In the same way we prove that $\vareps \in \wt \efA$.
\end{proof}

Applying the previous theorem with $\vareps_o \equiv 1$ we have that there is 
$\varrho \in {\bf end}\efA$ 
if, and only if, $\efA$ has holonomy in ${\bf aut}_{\rho}A_*$.
We call \emph{pointed dynamical \sC net bundle} the triple $(\efA ,\varrho,\vareps)$; 
to be concise, sometimes in the sequel we will write $\efA_{\varrho,\vareps}$. 
There is an obvious notion of morphism of pointed dynamical \sC net bundles:
\[
\eta : \efA_{\varrho,\vareps} \to \efA'_{\varrho',\vareps'} 
\ \Leftrightarrow \ 
\eta \in (\efA,\efA')
\ , \
\eta_o \circ \varrho_o = \varrho'_o \circ \eta_o
\ \ , \ \
\eta_o (\vareps_o) = \vareps'_o
\ \ , \ \
\forall o \in K
\ .
\]
We denote the set of pointed dynamical \sC net bundles with fibre $(A_*,\rho,\eps )$ by 
${\bf bun}(K;A_*,\rho,\eps)$ 
and the  associated set of isomorphism classes by $\unl{\bf bun}(K;A_*,\rho,\eps)$. 
We can now return on the question of symmetry of $(\wa \varrho,\wa \jmath,\otimes)_K$. 
\begin{cor}
\label{cor_NET03}
Let $(A_*,\rho,\eps)$ be a symmetric \sC dynamical system such that $A_*$ is generated by 
$\cup_{rs}(\rho^r,\rho^s)$.
Then for any 
$\efA_{\varrho,\vareps} \in {\bf bun}(K;A_*,\rho,\eps)$
the tensor net bundle 
$(\wa \varrho,\wa \jmath,\otimes,\vareps)_K$ 
is symmetric and has holonomy coinciding with the one of $\efA_{\varrho,\vareps}$,
having used the identification of Prop.\ref{prop_LTGA1a}.
There are one-to-one correspondences
\begin{equation}
\label{eq_NET03A}
\unl{\hom}( \pi_1(K),{\bf aut}_{\rho,\eps}A_*) 
\ \simeq \
\unl{\bf bun}(K;A_*,\rho,\eps)
\ \simeq \
\unl{\bf bun}^\uparrow(K,\wa \rho_{\otimes,\eps})
\ .
\end{equation}
\end{cor}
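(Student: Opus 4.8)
The corollary splices together two ingredients already in hand. The first bijection, $\unl{\hom}(\pi_1(K),{\bf aut}_{\rho,\eps}A_*) \simeq \unl{\bf bun}(K;A_*,\rho,\eps)$, is essentially \thmref{thm_NET00} (the equivalence ${\bf hom}(\pi_1(K),{\bf C}) \simeq {\bf bun}(K,{\bf C})$) applied with ${\bf C}$ the category whose objects are the single object $A_*$ and whose arrows are ${\bf aut}_{\rho,\eps}A_*$; one has to observe that \thmref{thm_NET03} is exactly what says a reduction of the structure group to ${\bf aut}_{\rho,\eps}A_*$ is the same data as a pointed dynamical structure $(\varrho,\vareps)$ on the \sC net bundle $\efA$, and that the notion of morphism of pointed dynamical \sC net bundles introduced just before the corollary matches, fibrewise, the notion of intertwiner of $\hom$'s. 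So the first step is to check that the holonomy correspondence of \thmref{thm_NET00} carries $\efA_\chi$ to $(\efA_\chi,\varrho,\vareps)$ with $\varrho,\vareps$ the endomorphism and section produced in the proof of \thmref{thm_NET03}(2)$\Ra$(1), and is compatible with morphisms, which is a matter of unwinding the definitions of $\dot\chi$ and of $\jmath_{o'o}$ in \eqref{eq_nets03}.

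The second bijection, $\unl{\bf bun}(K;A_*,\rho,\eps) \simeq \unl{\bf bun}^\uparrow(K,\wa\rho_{\otimes,\eps})$, is \eqref{eq_LTGA1a} (the consequence of \propref{prop_LTGA1a} and \eqref{eq_LTGA1}) together with the construction, given in the paragraph preceding \thmref{thm_NET03}, of the net bundle of tensor categories $(\wa\varrho,\wa\jmath,\otimes)_K$ from $(\efA,\varrho)$. The point to nail down is that this assignment $\efA_{\varrho,\vareps} \mapsto (\wa\varrho,\wa\jmath,\otimes,\vareps)_K$ is well-defined as a \emph{symmetric} tensor net bundle — i.e. that $\vareps$, which is a section of $\efA$, really defines a symmetry of the section category and is preserved by the $\wa\jmath_{a'a}$. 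For this one uses that $\vareps_a = \eps$ is the symmetry of $\wa\rho$ and that each $\jmath_{a'a}$ is a $*$-isomorphism, so $\jmath_{a'a}(\eps)$ is the symmetry of $\wa\varrho_{a'}$; since $\vareps \in \wt\efA$ means $\jmath_{a'a}(\vareps_a) = \vareps_{a'}$, compatibility \eqref{eq.NET1} holds. That the holonomy of $(\wa\varrho,\wa\jmath,\otimes,\vareps)_K$ equals that of $\efA_{\varrho,\vareps}$ under the identification of \propref{prop_LTGA1a} follows because the holonomy of the latter, a representation into ${\bf aut}_{\rho,\eps}A_*$, and the holonomy of the former, a representation into ${\bf aut}(\wa\rho,\otimes,\eps)$, are identified via $\alpha \mapsto \wa\alpha$; one checks the holonomy cocycles $Z(b)$ correspond under $\jmath_{o'o} \leftrightarrow \wa\jmath_{o'o}$, which is immediate from \eqref{eq.SE01a}.

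Concretely I would proceed: (i) note $\wa\rho_{\otimes,\eps}$ is precisely the fibre appearing in \eqref{eq_LTGA1a} and that, by the hypothesis that $A_*$ is generated by $\cup_{rs}(\rho^r,\rho^s)$, \propref{prop_LTGA1a} gives ${\bf aut}(\wa\rho,\otimes,\eps) \simeq {\bf aut}_{\rho,\eps}A_*$ via $\alpha\mapsto\wa\alpha$; (ii) invoke \eqref{eq_LTGA1} (or directly \thmref{thm_NET00}) for this fibre to get $\unl{\bf bun}^\uparrow(K,\wa\rho_{\otimes,\eps}) \simeq \unl{\hom}(\pi_1(K),{\bf aut}_{\rho,\eps}A_*)$; (iii) invoke \thmref{thm_NET03} plus the preceding remarks to identify this last set with $\unl{\bf bun}(K;A_*,\rho,\eps)$; (iv) verify the composite bijection is implemented by the explicit assignment $\efA_{\varrho,\vareps}\mapsto(\wa\varrho,\wa\jmath,\otimes,\vareps)_K$ and that this target is a symmetric tensor net bundle with the stated holonomy, using \eqref{eq.SE01}, \eqref{eq.SE01a} and the fact that $\jmath$-isomorphisms transport the symmetry operator. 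The main obstacle is step (iv): making sure that passing through the abstract equivalence \thmref{thm_NET00} and the concrete tensor-category construction yields the \emph{same} net bundle up to isomorphism, rather than two a priori different objects with the same holonomy — this requires tracking the path frame $\{\gamma_{ao}\}$ and the cocycle $Z$ through both constructions and checking the resulting $\wa\jmath_{a'a}$'s agree on objects and arrows. Everything else is bookkeeping over definitions already set up in the preceding section.
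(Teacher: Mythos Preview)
Your proposal is correct and follows essentially the same route as the paper: the first bijection via \thmref{thm_NET03}, the second via \eqref{eq_LTGA1a} (i.e.\ \propref{prop_LTGA1a} plus \eqref{eq_LTGA1}), and the verification that $(\wa\varrho,\wa\jmath,\otimes,\vareps)_K$ is symmetric with the right holonomy by transporting $\eps$ along the net structure. The paper's proof is terser and does not linger on your step~(iv); it simply observes that both constructions are governed by the same holonomy $\chi$, so they agree up to isomorphism by \thmref{thm_NET00}, which is enough at the level of isomorphism classes.
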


\begin{proof}
The first bijection in (\ref{eq_NET03A}) is a consequence of Theorem \ref{thm_NET03},
so we prove that there is a bijection
$\unl{\hom}( \pi_1(K),{\bf aut}_{\rho,\eps}A_*) \simeq 
 \unl{\bf bun}^\uparrow(K,\wa \rho_{\otimes,\eps})$.
To this end, we note that if 
$\chi : \pi_1(K) \to {\bf aut}_{\rho,\eps}A_*$ 
is the holonomy of $\efA_{\varrho,\vareps}$ then we have 
$\{ \chi(p) \}(\eps) = \eps$ for all $p \in \pi_1(K)$.
So, considering the induced *-isomorphisms
$Z(\gamma) : A_a \to A_o$, $\gamma : a \to o$
(see (\ref{eq_FC04A})), we define
$\vareps_o := \{ Z(\gamma) \}(\eps)$, $\forall o \in K$.
Using (\ref{eq.SE01}) we conclude that $\vareps$ is a section of $\efA$ and that 
$\vareps_o$ fulfils (\ref{eq.eps1},\ref{eq.eps2}) for any $o \in K$, 
so we have the desired symmetric tensor net bundle 
$(\wa \varrho,\wa \jmath,\otimes,\vareps)_K$.
On the other side, if 
$(\wa \varrho,\wa \jmath,\otimes,\vareps)_K \in {\bf bun}^\uparrow(K,\wa \rho_{\otimes,\eps})$
then (\ref{eq_LTGA1a}) implies that $\wa \varrho$ has holonomy
$\chi : \pi_1(K) \to {\bf aut}_{\rho,\eps}A_*$,
which determines the net bundle $\efA_{\varrho,\vareps}$.
\end{proof}

\noindent {\bf Cuntz algebras.} 
We call \emph{symmetric \sC net bundle} a pointed dynamical \sC net bundle 
of the type considered in the previous corollary.
In the following paragraphs we study a "universal" class of symmetric \sC net bundles,
defined by means of the Cuntz algebras $\mO_d$, $d \in \bN$. 
%
%
We start recalling that $\mO_d$ is the universal \sC algebra generated by $d$ orthogonal
isometries $\psi_1 , \ldots , \psi_d$ with total support the identity, that is
\[
\psi_h^* \psi_k = \delta_{hk}1
\ \ , \ \
\sum_k \psi_k \psi_k^* = 1
\ ,
\]
where $\delta$ is the Kronecker symbol (\cite{Cun77}). $\mO_d$ has a well-known structure
of symmetric \sC dynamical system,
\[
\sigma \in {\bf end}\mO_d
\ , \
\sigma(t) \, := \, \sum_k \psi_k t \psi_k^*
\ , \
\forall t \in \mO_d
\ \ \ , \ \ \
\theta \, := \, \sum_{hk} \psi_h \psi_k \psi_h^* \psi_k^*  \ \in ( \sigma^2,\sigma^2 )
\]
(see \cite[\S 2]{DR87}), and is endowed with the action
\begin{equation}
\label{eq_FL04}
\ud \to {\bf aut} \mO_d 
\ \ , \ \
u \mapsto \wa u \ : \ \wa u (\psi_h) := \sum_k u_{hk} \psi_k
\ ,
\end{equation}
where $u_{hk} \in \bC$, $h,k= 1 , \ldots , d$, are the matrix elements of $u$ (\cite[\S 1]{DR87}). 
Clearly, the action (\ref{eq_FL04}) restricts to any subgroup $G \subseteq \ud$. Since 
$\wa u \circ \sigma = \sigma \circ \wa u$ and $\wa u (\theta) = \theta$, $\forall u \in \ud$, 
we have that the fixed point algebra $\mO_G \subseteq \mO_d$ defines the symmetric \sC dynamical system 
\[
(\mO_G,\rho,\theta) \ \ , \ \ \rho := \sigma |_{\mO_G} \ .
\]
%
%
Let now $NG \subseteq \ud$ denote the normalizer of $G$ in $\ud$; we consider the quotient
\[
q : NG \to QG := NG/G
\]
and define
${\bf aut}^G\mO_d := \{ \alpha \in {\bf aut}_{\sigma,\theta}\mO_d \ : \ \alpha |_{\mO_G} \in {\bf aut}_{\rho,\theta}\mO_G
\}$.
\begin{lem}
\label{lem_NG}
Let $u \in \ud$. Then $u \in NG$ if, and only if, $\wa u \in {\bf aut}^G\mO_d$. If $v \in NG$, then $\wa u |_{\mO_G} = \wa v |_{\mO_G}$ if, and only if, $q(u) = q(v)$.
\end{lem}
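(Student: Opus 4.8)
The plan is to prove the two biconditionals separately, using the defining property of the $\mO_d$-action (\ref{eq_FL04}) and the algebraic characterization of $\mO_G$ as the fixed-point algebra.

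\textbf{First biconditional: $u \in NG \iff \wa u \in {\bf aut}^G\mO_d$.} For the forward direction, suppose $u \in NG$, i.e. $u G u^{-1} = G$. We already know from the discussion preceding the lemma that every $\wa u$, $u \in \ud$, lies in ${\bf aut}_{\sigma,\theta}\mO_d$, since $\wa u \circ \sigma = \sigma \circ \wa u$ and $\wa u(\theta) = \theta$. So it only remains to show $\wa u$ maps $\mO_G$ onto itself and restricts there to an element of ${\bf aut}_{\rho,\theta}\mO_G$. The key computation is that for $v \in G$ and $t \in \mO_d$, one has $\wa v(\wa u(t)) = \wa{vu}(t) = \wa u(\wa{u^{-1}vu}(t))$, and since $u^{-1}vu \in G$ whenever $u \in NG$, any $t \in \mO_G$ satisfies $\wa{u^{-1}vu}(t) = t$, hence $\wa v(\wa u(t)) = \wa u(t)$; as $v \in G$ was arbitrary this gives $\wa u(t) \in \mO_G$. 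Applying the same argument to $u^{-1}$ (also in $NG$) shows $\wa u(\mO_G) = \mO_G$. That $\wa u|_{\mO_G}$ commutes with $\rho = \sigma|_{\mO_G}$ and fixes $\theta$ follows by restriction from the corresponding facts on $\mO_d$. For the converse, suppose $\wa u \in {\bf aut}^G\mO_d$, so in particular $\wa u(\mO_G) = \mO_G$. I need to deduce $u \in NG$; here I would use that $\mO_G$ \emph{determines} $G$ inside $\ud$ — concretely, $G = \{ v \in \ud : \wa v|_{\mO_G} = \mathrm{id}\}$, which is the content of the Galois correspondence for the $\ud$-action on $\mO_d$ (\cite{DR87}). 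Then for $v \in G$, the automorphism $\wa u \circ \wa v \circ \wa u^{-1} = \wa{uvu^{-1}}$ fixes $\mO_G$ pointwise (since $\wa u^{-1}$ preserves $\mO_G$, $\wa v$ fixes it pointwise, and $\wa u$ preserves it), hence $uvu^{-1} \in G$; symmetrically $u^{-1}vu \in G$, so $u \in NG$.

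\textbf{Second biconditional: $\wa u|_{\mO_G} = \wa v|_{\mO_G} \iff q(u) = q(v)$.} Note first that $q(u) = q(v)$ means $u^{-1}v \in G$ (assuming $u,v \in NG$, which is the relevant case given the statement). If $u^{-1}v \in G$, then $\wa{u^{-1}v}|_{\mO_G} = \mathrm{id}$ by definition of $\mO_G$ as the fixed-point algebra, so $\wa v|_{\mO_G} = \wa u(\wa{u^{-1}v}|_{\mO_G})|_{\mO_G}$; more carefully, $\wa u^{-1} \circ \wa v = \wa{u^{-1}v}$ restricts to the identity on $\mO_G$, hence $\wa v|_{\mO_G} = \wa u|_{\mO_G}$. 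Conversely, if $\wa u|_{\mO_G} = \wa v|_{\mO_G}$ then $\wa{u^{-1}v}|_{\mO_G} = \wa u^{-1}|_{\mO_G} \circ \wa v|_{\mO_G} = \mathrm{id}$, so by the characterization $G = \{w \in \ud : \wa w|_{\mO_G} = \mathrm{id}\}$ we get $u^{-1}v \in G$, i.e. $q(u) = q(v)$.

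\textbf{Main obstacle.} The routine parts are the functoriality identities $\wa u \circ \wa v = \wa{uv}$ and the intertwining/fixing relations for $\sigma$ and $\theta$, which are immediate from (\ref{eq_FL04}). The genuinely load-bearing ingredient is the fact that $G$ is exactly recovered as the group of $\ud$-elements acting trivially on $\mO_G$ — equivalently, that distinct closed subgroups of $\ud$ have distinct fixed-point algebras in $\mO_d$. This is the Doplicher--Roberts Galois correspondence for the Cuntz algebra (\cite[\S 3-4]{DR87}), and I would invoke it as a known result; without it the converse directions of both biconditionals do not go through. If one wanted to avoid quoting it, the reduction would instead go through identifying $(\rho^r,\rho^s)$ inside $\mO_G$ with the $G$-invariant intertwiners of tensor powers of the defining representation and running a Tannaka-type argument, but that is precisely the DR theorem repackaged.
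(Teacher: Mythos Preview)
Your proof is correct and follows essentially the same approach as the paper's: both arguments use the functoriality $\wa u \circ \wa v = \wa{uv}$ together with the Doplicher--Roberts Galois correspondence \cite[Cor.~3.3]{DR87} (that $G$ is exactly the stabilizer of $\mO_G$ in $\ud$) to handle the converse directions of the two biconditionals. Your identification of this Galois result as the load-bearing ingredient is precisely right; the paper invokes it at the same two points.
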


\begin{proof}
If $u \in NG$ then $g' := ugu^* \in G$ for all $g \in G$ and 
$\wa u (t) = \wa u \circ \wa g (t) = \wa u (t) = \wa g' \circ \wa u (t)$ 
for all $t \in \mO_G$. Since every $g' \in G$ is of the type $g' = ugu^*$ for some $g \in G$, we conclude that $\wa u (t) \in \mO_G$ and $\wa u \in {\bf aut}^G\mO_d$.
On the converse, if $\wa u \in {\bf aut}^G\mO_d$ then $\wa u^{-1}(t) \in \mO_G$ for all $t \in \mO_G$ and, defining $g' := ugu^*$ for each $g \in G$, we find 
$\wa g' (t) = \wa u \circ \wa g \circ \wa u^{-1}(t) = \wa u \circ \wa u^{-1}(t) = t$.
Thus we conclude that $\wa g'$ is in the stabilizer of $\mO_G$ in ${\bf aut}\mO_d$, so that, by \cite[Corollary 3.3]{DR87}, $g' \in G$ i.e. $u \in NG$. 
Finally, again by \cite[Corollary 3.3]{DR87} we have that $\wa u |_{\mO_G} = \wa v |_{\mO_G}$ if, and only if, $uv^* \in G$, i.e. $q(u) = q(v)$.
\end{proof}

Thus the map (\ref{eq_FL04}) defines a monomorphism $NG \to {\bf aut}^G\mO_d$, which yields the monomorphism
\begin{equation}
\label{eq_FL04A}
QG \to {\bf aut}_{\rho,\theta}\mO_G 
\ \ , \ \
y \mapsto \wa y
\ .
\end{equation}
By \cite[Def.4-Theorem 7]{Vas09}, the map (\ref{eq_FL04A}) is an isomorphism when $G \subseteq \sud$ or $G = \bT \subseteq \ud$
acts on $\bC^d$ by scalar multiplication; in particular, for $G = \{ 1 \}$ we have the isomorphism
\begin{equation}
\label{eq_prop_FL01}
\ud \to {\bf aut}_{\sigma,\theta}\mO_d
\ \ , \ \
u \mapsto \wa u
\ .
\end{equation}
To simplify the proofs of the following results we give a categorical version of (\ref{eq_prop_FL01}). 
Let ${\bf Hilb}^{-1}$ denote the category with objects finite dimensional Hilbert spaces 
and arrows the sets $(H,H')^{-1}$ of \emph{unitary} operators from $H$ to $H'$; by \cite[\S 4]{DR89}, 
there is a *-functor
\begin{equation}
\label{eq_cuntz_dr}
{\bf Hilb}^{-1} \to {\Calg}
\ \ , \ \
H \mapsto \mO_H
\ \ , \ \
u \mapsto \wa u
\ ,
\end{equation}
where $\mO_H$ is isomorphic to the Cuntz algebra of order the dimension of $H$, and 
$\wa u : \mO_H \to \mO_{H'}$ 
is the isomorphism defined by
$\wa u(\psi_h) := \sum_k u_{hk} \psi'_k$,
where $\{ \psi_h \}$, $\{ \psi'_k \}$ are the sets of isometries generating $\mO_H$, $\mO_{H'}$ respectively. 
By the previous considerations, if we denote the canonical endomorphism of $\mO_H$ 
and the symmetry operator respectively by $\sigma_H$ and $\theta_H$, then
$\wa u \circ \sigma_H = \sigma_{H'} \circ \wa u$
and
$\wa u (\theta_H) = \theta_{H'}$,
so (\ref{eq_cuntz_dr}) takes values in the category of symmetric \sC dynamical systems.

Given $G \subseteq \ud$, we consider the defining representation 
\[
\pi_G : G \to \ud
\]
and the tensor \sC category $\wa \pi_G$ with objects the tensor powers $\pi_G^r$, $r \in \bN$, 
and arrows the intertwiner spaces. $\wa \pi_G$ has the symmetry $\varphi$ inherited from ${\bf Hilb}$
(see \cite[\S 1]{DR87}), so we write 
$\wa \pi_{G;\otimes,\varphi}$
to emphasize the symmetric tensor structure.
Let now $u \in \ud$. Then $u$ defines a symmetric tensor automorphism of the category of tensor powers
of $H := \bC^d$,
\begin{equation}
\label{eq.NGa}
\wa u(H^r) := H^r
\ \ , \ \
\wa u(t) := u^s \circ t \circ u^{r,*}
\ \ , \ \
\forall r,s \in \bN
\ , \
t \in (H^r,H^s)
\ ,
\end{equation}
where $u^r \in UH^r$ is the $r$-fold tensor power.
Since the only unitaries inducing the identity automorphism of $\wa \pi_G$ are the elements of $G$ (\cite[Corollary 3.3]{DR87}), 
reasoning as in Lemma \ref{lem_NG} we have
\begin{equation}
\label{eq.NG}
\left\{
\begin{array}{ll}
u \in NG \ \ \Leftrightarrow \ \ \wa u(t) \in (\pi_G^r,\pi_G^s) \ , \ \forall t \in (\pi_G^r,\pi_G^s) \ ,
\\
\wa u |_{\wa \pi_G} \, = \, \wa v |_{\wa \pi_G} \ \ \Leftrightarrow \ \ q(u) = q(v) \ , \ \forall u,v \in NG \ .
\end{array}
\right.
\end{equation}
\begin{thm}
\label{thm_OG}
Let $G \subseteq \ud$. Then there are maps
\begin{equation}
\label{eq_thm_FL08}
\left\{
\begin{array}{ll}
{\hom}( \pi_1(K) , QG ) \ \stackrel{(\bullet)}{\to} \ {\bf bun}(K ; \mO_G , \rho , \theta ) 
\ \ , \ \
\chi \mapsto \efA_{\chi ; \varrho,\vartheta} \ ,
\\
{\hom}( \pi_1(K) , QG )				\ \stackrel{(\bullet \bullet)}{\to} \ 
{\bf bun}^\uparrow(K,\wa \rho_{\otimes,\theta}) \ \stackrel{(\bullet \bullet \bullet)}{\to} \ 
{\bf bun}^\uparrow(K,\wa \pi_{G;\otimes,\varphi}) \ .
\end{array}
\right.
\end{equation}
The map $(\bullet \bullet \bullet)$ is always an isomorphism, whilst $(\bullet)$, $(\bullet \bullet)$ are isomorphisms
when $G \subseteq \sud$ or $G = \bT \subseteq \ud$.
\end{thm}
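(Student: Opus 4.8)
The plan is to construct the three maps explicitly and then identify which are bijections by combining the duality results already established in the paper with the structural facts about Cuntz algebras recalled above. For the map $(\bullet)$, given $\chi \in \hom(\pi_1(K),QG)$, use the monomorphism $QG \to {\bf aut}_{\rho,\theta}\mO_G$ of (\ref{eq_FL04A}) to regard $\chi$ as a representation into ${\bf aut}_{\rho,\theta}\mO_G$, then apply Theorem \ref{thm_NET00} to produce a net bundle $\efA$ with fibre $\mO_G$ whose holonomy is $\chi$; Theorem \ref{thm_NET03} then supplies the section $\vartheta$ and the endomorphism $\varrho$, yielding the pointed dynamical \sC net bundle $\efA_{\chi;\varrho,\vartheta} \in {\bf bun}(K;\mO_G,\rho,\theta)$. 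For $(\bullet\bullet)$, compose the same identification $QG \hookrightarrow {\bf aut}_{\rho,\theta}\mO_G$ with the correspondence (\ref{eq_LTGA1a}) (valid because $\mO_G$ is generated by $\cup_{rs}(\rho^r,\rho^s)$, which is the hypothesis needed for Cor.\ref{cor_NET03}), landing in ${\bf bun}^\uparrow(K,\wa\rho_{\otimes,\theta})$. For $(\bullet\bullet\bullet)$, use the \sC functor (\ref{eq_cuntz_dr}) together with the Doplicher–Roberts description: the category $\wa\rho$ of tensor powers of the canonical endomorphism $\rho$ of $\mO_G$ is \sC isomorphic, as a symmetric tensor category, to the dual $\wa\pi_{G;\otimes,\varphi}$ of $G$; this is exactly the content of \cite[\S 4]{DR89} and is the reason the intertwiner spaces match. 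Applying this fibrewise and checking compatibility with the net structures $\wa\jmath$ gives an isomorphism of categories of net bundles, hence a bijection on isomorphism classes; I expect this to extend to a bijection on the underlying sets of net bundles once one fixes path frames as in the proof of Theorem \ref{thm_NET00}, so $(\bullet\bullet\bullet)$ is always an isomorphism regardless of $G$.

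For the remaining injectivity/surjectivity claims on $(\bullet)$ and $(\bullet\bullet)$, the key point is that both factor through the map (\ref{eq_FL04A}), $QG \to {\bf aut}_{\rho,\theta}\mO_G$, and the composite becomes an isomorphism precisely when (\ref{eq_FL04A}) is an isomorphism. By \cite[Def.4--Theorem 7]{Vas09}, this happens when $G \subseteq \sud$ or when $G = \bT \subseteq \ud$ acts by scalars. In those cases one reads off: every pointed dynamical \sC net bundle with fibre $(\mO_G,\rho,\theta)$ has holonomy in ${\bf aut}_{\rho,\theta}\mO_G = QG$ (so $(\bullet)$ is onto), and two such holonomies conjugate in ${\bf aut}_{\rho,\theta}\mO_G$ iff they conjugate in $QG$ (so $(\bullet)$ is injective on the relevant equivalence classes — but note the statement is about the sets $\hom(\pi_1(K),QG)$ and ${\bf bun}(\dots)$, not their quotients, so injectivity here means distinct cocycles give non-isomorphic bundles only after one also fixes the path frame, matching the set-level bijection of Theorem \ref{thm_NET00}). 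The same argument gives $(\bullet\bullet)$.

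The main obstacle I anticipate is the careful bookkeeping for $(\bullet\bullet\bullet)$: one must verify that the fibrewise DR-isomorphisms $\wa\rho_a \to \wa\pi_{G;\otimes,\varphi}$ intertwine the net structure $\wa\jmath_{a'a}$ of (\ref{eq.SE01a}) with the (constant) net structure on the target, i.e.\ that the identifications are \emph{natural} in $a \in K$. This is where one genuinely uses that the structure maps $\jmath_{a'a}$ of $\efA$ are \sC isomorphisms preserving $\rho$ and $\theta$ (relations (\ref{eq.SE01})), so they act on each intertwiner space $(\rho_a^r,\rho_a^s)$ compatibly with the DR-duality, which is functorial. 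A secondary subtlety is keeping straight the distinction, flagged in the paper's notation, between $\hom$ and $\unl\hom$: since the theorem asserts isomorphisms at the level of the non-quotiented sets, all bijections must be produced via an explicit fixing of the path frame exactly as in the proof of Theorem \ref{thm_NET00}, and one should state once that this choice is made throughout. Everything else is an assembly of results already in the excerpt.
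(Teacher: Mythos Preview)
Your proposal is correct and follows essentially the same route as the paper: construct $(\bullet)$ via the monomorphism (\ref{eq_FL04A}) composed with the holonomy equivalence (packaged in the paper as Cor.~\ref{cor_NET03}/(\ref{eq_NET03A})), invoke \cite[Def.4--Theorem 7]{Vas09} for when (\ref{eq_FL04A}) is an isomorphism, and use the Doplicher--Roberts isomorphism $\wa\rho_{\otimes,\theta}\simeq\wa\pi_{G;\otimes,\varphi}$ (the paper cites \cite[Theorem 3.5]{DR87}) for $(\bullet\bullet\bullet)$. Your worry about fibrewise naturality for $(\bullet\bullet\bullet)$ is unnecessary: since ${\bf bun}^\uparrow(K,T_{\otimes,\eps})$ depends only on the isomorphism class of the \emph{standard fibre} $T_{\otimes,\eps}$ (via (\ref{eq_LTGA1})), a single isomorphism $\wa\rho_{\otimes,\theta}\simeq\wa\pi_{G;\otimes,\varphi}$ suffices, with no need to check compatibility across $a\in K$.
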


\begin{proof}
(\ref{eq_thm_FL08}.1) is a direct consequence of (\ref{eq_NET03A}) and the fact that (\ref{eq_FL04A}) is an isomorphism when $G \subseteq \sud$ or $G = \bT \subseteq \ud$ (\cite[Def.4-Theorem 7]{Vas09}). 
About (\ref{eq_thm_FL08}.2), we note that $\mO_G$ is generated by $\cup_{rs}(\rho^r,\rho^s)$
(see \cite[\S 1,\S 3]{DR87}), and that there is an isomorphism 
$\wa \rho_{\otimes,\theta} \simeq \wa \pi_{G;\otimes,\varphi}$
(see \cite[Theorem 3.5]{DR87}), so the proof follows by (\ref{eq_thm_FL08}.1) and Corollary \ref{cor_NET03}.
\end{proof}

\begin{prop}
\label{prop_FL01}
Let $d \in \bN$ and $\efH = (H,j ; \bC^d )_K$ a Hilbert net bundle. 
Then $\efH$ defines a symmetric \sC net bundle
\[
\mO_\efH = ( \mO_H , \wa j ; \mO_d )_K
\ \ , \ \
( \mO_\efH , \varsigma , \vartheta ) \in {\bf bun}(K;\mO_d,\sigma,\theta)
\ .
\]
The symmetric tensor net bundle $(\wa \varsigma,\wa \jmath,\otimes,\vartheta)_K$
has fibre the full subcategory of ${\bf Hilb}$ of tensor powers of $\bC^d$.
Moreover, $\efH , \efH'$ are isomorphic if and only if the there is an isomorphism 
$\mO_{\efH ; \varsigma , \vartheta} \simeq \mO_{\efH' ; \varsigma' , \vartheta'}$.
\end{prop}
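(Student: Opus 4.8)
The plan is to apply the functorial machinery developed earlier in the paper, in particular the \sC functor (\ref{eq_cuntz_dr}) sending unitaries to Cuntz-algebra isomorphisms, to the Hilbert net bundle $\efH$ fibrewise. First I would set $\mO_\efH := (\mO_H,\wa\jmath)_K$, where $\mO_H := \{ \mO_{H_o} \}_{o\in K}$ and $\wa\jmath_{o'o} := \wa{\jmath_{o'o}}$ is the image under (\ref{eq_cuntz_dr}) of the unitary $\jmath_{o'o} : H_o \to H_{o'}$. Since (\ref{eq_cuntz_dr}) is a functor, functoriality immediately gives the net relations $\wa\jmath_{o''o'}\circ\wa\jmath_{o'o} = \wa\jmath_{o''o}$, and each $\wa\jmath_{o'o}$ is a \sC isomorphism (indeed a $*$-isomorphism onto), so $\mO_\efH$ is a \sC net bundle with standard fibre $\mO_d$. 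Next, because (\ref{eq_cuntz_dr}) takes values in symmetric \sC dynamical systems, we have $\wa{\jmath_{o'o}}\circ\sigma_{H_o} = \sigma_{H_{o'}}\circ\wa{\jmath_{o'o}}$ and $\wa{\jmath_{o'o}}(\theta_{H_o}) = \theta_{H_{o'}}$; setting $\varsigma_o := \sigma_{H_o}$ and $\vartheta_o := \theta_{H_o}$ then verifies, via (\ref{eq.SE01}), that $\varsigma \in {\bf end}\mO_\efH$ and $\vartheta \in \wt{\mO_\efH}$ with $\varsigma_a = \sigma$, $\vartheta_a = \theta$. Hence $(\mO_\efH,\varsigma,\vartheta) \in {\bf bun}(K;\mO_d,\sigma,\theta)$; since $(\mO_d,\sigma,\theta)$ is a symmetric \sC dynamical system with $\mO_d$ generated by $\cup_{rs}(\sigma^r,\sigma^s)$, this is a symmetric \sC net bundle in the sense fixed just before the statement.

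For the second assertion, I would invoke Cor.\ref{cor_NET03}: the symmetric tensor net bundle $(\wa\varsigma,\wa\jmath,\otimes,\vartheta)_K$ is symmetric with fibre $\wa\sigma_{\otimes,\theta}$. By \cite[Theorem 3.5]{DR87} (or the isomorphism $\wa\rho_{\otimes,\theta}\simeq\wa\pi_{G;\otimes,\varphi}$ quoted in the proof of Theorem \ref{thm_OG}, specialized to $G = \{1\}$, $\mO_G = \mO_d$), the category $\wa\sigma_{\otimes,\theta}$ is isomorphic as a symmetric tensor \sC category to the full subcategory of ${\bf Hilb}$ whose objects are the tensor powers $(\bC^d)^r$, $r\in\bN$. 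So the fibre of $(\wa\varsigma,\wa\jmath,\otimes,\vartheta)_K$ is, up to this canonical isomorphism, exactly that subcategory of ${\bf Hilb}$, as claimed.

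The last assertion, that $\efH\simeq\efH'$ iff $\mO_{\efH;\varsigma,\vartheta}\simeq\mO_{\efH';\varsigma',\vartheta'}$ as pointed dynamical \sC net bundles, is where the real content lies. One direction is free: a Hilbert net bundle isomorphism $u = \{ u_o : H_o \to H'_o \}$ is a family of unitaries intertwining the net structures, so applying (\ref{eq_cuntz_dr}) fibrewise gives $\wa u := \{ \wa{u_o} \}$, and functoriality together with $\wa{u_o}\circ\sigma_{H_o} = \sigma_{H'_o}\circ\wa{u_o}$, $\wa{u_o}(\theta_{H_o}) = \theta_{H'_o}$ shows $\wa u$ is an isomorphism of pointed dynamical \sC net bundles. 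For the converse I would reduce to holonomy: by Theorem \ref{thm_NET00} a Hilbert net bundle with fibre $\bC^d$ is classified up to isomorphism by its holonomy class in $\unl\hom(\pi_1(K),\ud)$, while by Theorem \ref{thm_NET03} (and its proof) a pointed dynamical \sC net bundle with fibre $(\mO_d,\sigma,\theta)$ is classified by its holonomy class in $\unl\hom(\pi_1(K),{\bf aut}_{\sigma,\theta}\mO_d)$. The isomorphism (\ref{eq_prop_FL01}), $\ud\to{\bf aut}_{\sigma,\theta}\mO_d$, $u\mapsto\wa u$, is an isomorphism of groups, and it is compatible with holonomy in the sense that the holonomy of $\mO_\efH$ is the image under (\ref{eq_prop_FL01}) of the holonomy of $\efH$ — this follows by chasing the explicit holonomy cocycle (\ref{eq_FC01})--(\ref{eq_FC04}) through the functor (\ref{eq_cuntz_dr}). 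Hence an isomorphism $\mO_{\efH;\varsigma,\vartheta}\simeq\mO_{\efH';\varsigma',\vartheta'}$ forces the holonomies of $\efH$ and $\efH'$ to be conjugate in $\ud$, and Theorem \ref{thm_NET00} then returns an isomorphism $\efH\simeq\efH'$.

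The main obstacle I anticipate is precisely this compatibility-of-holonomies step: one must check that applying the functor (\ref{eq_cuntz_dr}) fibrewise commutes with forming the flat connection $Z$ and the induced holonomy representation, i.e. that $\dot\chi_{\mO_\efH}(\gamma) = \wa{\dot\chi_\efH(\gamma)}$ for all loops $\gamma$. This is a diagram chase that is conceptually routine given functoriality but needs care about the direction of the inverted structure maps $\wa\jmath_{oo'}$ in (\ref{eq_FC01}), and about the fact that (\ref{eq_prop_FL01}) is an isomorphism of groups (not merely a monomorphism), which is exactly \cite[Def.4-Theorem 7]{Vas09} for $G = \{1\}$; one should also make sure the symmetry operator $\vartheta$ plays no obstructing role, which it does not since $\wa u(\theta) = \theta$ for every $u\in\ud$.
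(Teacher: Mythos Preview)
Your proposal is correct and uses the same ingredients as the paper: the functor (\ref{eq_cuntz_dr}), the group isomorphism (\ref{eq_prop_FL01}), and the holonomy classification. The only difference is the order of assembly. You construct $\mO_\efH$ by applying (\ref{eq_cuntz_dr}) fibrewise to the net structure $j$ and then must verify that the resulting holonomy equals $\wa{\,\cdot\,}\circ\chi$; the paper instead \emph{defines} $\mO_{\efH;\varsigma,\vartheta}$ as the element of ${\bf bun}(K;\mO_d,\sigma,\theta)$ with holonomy $\wa{\,\cdot\,}\circ\chi$ (via Theorem \ref{thm_NET03}/Cor.\ref{cor_NET03}), so the compatibility step you flag as the ``main obstacle'' never arises. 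Your route is more explicit, the paper's is shorter; both rest on (\ref{eq_prop_FL01}) being an isomorphism, which gives the ``iff'' immediately at the level of $\unl\hom(\pi_1(K),-)$. For the fibre assertion the paper simply applies Theorem \ref{thm_OG} with $G=\{1\}$, which is exactly the specialization you describe.
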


\begin{proof}
If $\chi : \pi_1(K) \to \ud$ is the holonomy of $\efH$, then composing $\chi$ with (\ref{eq_prop_FL01})
we obtain the desired $\mO_{\efH ; \varsigma , \vartheta}$.
Since (\ref{eq_prop_FL01}) is an isomorphism we conclude that $\efH , \efH'$ are isomorphic
if and only if $\mO_{\efH ; \varsigma , \vartheta}$, $\mO_{\efH' ; \varsigma' , \vartheta'}$
are isomorphic. 
The assertion about $(\wa \varsigma,\wa \jmath,\otimes,\vartheta)_K$ follows by applying
Theorem \ref{thm_OG} to the trivial group $G = \{ 1 \}$.
\end{proof}

Let ${\bf U}\efH = (UH,\jmath_* ; \ud )_K$ denote the net bundle of unitary automorphisms of $\efH$. 
The family 
$SUH := \{ u \in UH_a : \det u = 1  \}_a$ 
is stable under $\jmath_*$, thus we have a group net subbundle of ${\bf U}\efH$ that we denote by
${\bf SU} \efH \in {\bf bun}(K,\sud)$.
Let $\efG = (Y,i;G)_K$ be a group net bundle and $(\efH,\alpha)$ a $\efG$-Hilbert net bundle; then we may regard $\alpha$ as a morphism $\alpha \in (\efG,{\bf U}\efH)$, and the equalities
\[
\jmath_{*,o' o} \circ \alpha_o (Y_o) = \alpha_{o'} \circ i_{o'o}(Y_o) = \alpha_{o'}(Y_{o'})
\ \ , \ \
o \leq o'
\ ,
\]
imply that the set $\alpha(Y) \subseteq UH$ is stable under $\jmath_*$. Thus any gauge action on $\efH$ defines a group net subbundle $\efG_\alpha$ of ${\bf U}\efH$. In the sequel we consider exclusively gauge actions induced by net subbundles of ${\bf U}\efH$.
An immediate consequence of Lemma \ref{lem_LTGA1} is the following:
\begin{lem}
\label{lem_FL07}
Let $\efH \in {\bf bun}(K,\bC^d)$. Group net subbundles of ${\bf U}\efH$ with fibre $G \subseteq \ud$ are in one-to-one correspondence with reductions to $NG$ of the structure group of $\efH$.
\end{lem}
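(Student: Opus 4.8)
The plan is to deduce Lemma~\ref{lem_FL07} directly from Lemma~\ref{lem_LTGA1}, exactly as the preceding sentence promises, by unwinding what "faithful gauge action on $\efH$" and "reduction of the structure group of $\efH$" specialize to when the fibre is $\bC^d$ and the gauge group has fibre $G \subseteq \ud$. The starting point is the observation, already made in the paragraph above the lemma, that a group net subbundle $\efG_\alpha \subseteq {\bf U}\efH$ is the same thing as a faithful gauge action $\alpha \in (\efG, {\bf U}\efH)$ whose image family $\alpha(Y)$ is stable under $\jmath_*$; conversely any faithful gauge action with fibre $G$ arises this way (restricting to gauge actions induced by net subbundles of ${\bf U}\efH$, as the text explicitly does). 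So the left-hand side of the claimed correspondence is literally the set of faithful gauge actions on $\efH$ with fibre $G$.

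First I would invoke Lemma~\ref{lem_LTGA1}: faithful gauge actions on the net bundle $\efH = (H,j;\bC^d)_K$ are in one-to-one correspondence with reductions of the structure group of $\efH$, where ${\bf aut}\,\bC^d = \ud$. Then I would refine this bijection by tracking the fibre of the gauge group. In the proof of Lemma~\ref{lem_LTGA1}, the reduction $\chi \in \hom(\pi_1(K), NG)$ produces a group net bundle $\efG \in {\bf bun}(K,G)$ with holonomy $\chi' = \ad\chi(\cdot)|_G$, so the fibre of $\efG$ is $G$; conversely, starting from a monomorphism $\alpha \in (\efG,{\bf aut}\efH)$ with $\efG = (Y,i;G)_K$, the argument shows the holonomy $\chi$ of $\efH$ takes values in $NG_a \simeq NG$. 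Hence under the bijection of Lemma~\ref{lem_LTGA1}, gauge actions whose gauge group has fibre isomorphic to $G$ correspond precisely to reductions $\chi : \pi_1(K) \to NG$, i.e. reductions to $NG$ of the structure group of $\efH$. Combining this with the identification of the left-hand side in the previous paragraph yields the stated one-to-one correspondence.

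The only real point to be careful about — and the step I expect to be the main obstacle — is matching "group net subbundle of ${\bf U}\efH$ with fibre $G$" against "faithful gauge action with gauge group of fibre $G$" without circularity, and making sure the notion of reduction is the right one: a reduction to $NG$ of the structure group of $\efH$ means, by the definition given after Theorem~\ref{thm_NET00}, that $\efH$ admits a holonomy representation $\pi_1(K) \to \ud$ with image contained in (a conjugate of) $NG$. One must check that the $\efG_\alpha$ attached to a gauge action $\alpha$ and the reduction produced by Lemma~\ref{lem_LTGA1} are mutually inverse at the level of these data, which is immediate since both are governed by the same holonomy $\chi$ of $\efH$ via Theorem~\ref{thm_NET00}. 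I would therefore phrase the proof as: apply Lemma~\ref{lem_LTGA1}, observe that its bijection is compatible with the assignment $\efG_\alpha$ of the ambient-fibre data, and read off that the subbundles with fibre $G$ match the reductions to $NG$; this is short enough that no separate computation is needed beyond citing the earlier results.
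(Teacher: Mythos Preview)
Your proposal is correct and takes exactly the paper's approach: the paper simply states that Lemma~\ref{lem_FL07} is an immediate consequence of Lemma~\ref{lem_LTGA1}, and your argument unwinds precisely that implication by tracking the fibre $G$ through the bijection of Lemma~\ref{lem_LTGA1} to land on reductions to $NG$. There is nothing to add.
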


\begin{prop}
\label{prop_FL07}
Let $\efH = (H,\jmath)_K \in {\bf bun}(K,\bC^d)$ a Hilbert net bundle and 
\[
\efG \subseteq {\bf U}\efH \ \ , \ \ \efG = (Y,\jmath_*)_K \ ,
\]
a group net subbundle. Then there is a gauge action 
$\efG \times_K \mO_\efH \to \mO_\efH$
and a closed group $G \subseteq \ud$ such that the fixed-point net bundle 
$\mO_\efH^\efG \subseteq \mO_\efH$ 
yields the symmetric \sC net bundle 
\[
\mO_{\efH ; \varrho,\vartheta}^\efG \in {\bf bun}(K;\mO_G,\rho,\theta) \ .
\]
If $\chi : \pi_1(K) \to NG$ is the holonomy representation of $\efH$ defined in Lemma \ref{lem_FL07}, 
then $\mO_\efH^\efG$ has holonomy representation induced by
\[
q \circ \chi : \pi_1(K) \to QG \ .
\]
Finally, $q \circ \chi$ is the holonomy of the symmetric tensor net bundle
$(\wa \varrho,\wa \jmath,\otimes,\vartheta)_K \in {\bf bun}^\uparrow(K,\wa \pi_{G;\otimes,\varphi})$,
which is a subbundle of $(\wa \varsigma,\wa \jmath,\otimes,\vartheta)_K$.
\end{prop}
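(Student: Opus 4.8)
The plan is to obtain every piece of structure fibrewise from the Cuntz functor (\ref{eq_cuntz_dr}) and then check its compatibility with the net bundle structure $\wa\jmath$ of $\mO_\efH$ and with $\efG$. First I would build the gauge action. For each $o\in K$ one has the continuous morphism $\alpha_o:UH_o\to{\bf aut}\mO_{H_o}$, $u\mapsto\wa u$, of (\ref{eq_FL04}). That the $\alpha_o$'s assemble into a gauge ${\bf U}\efH$-action is a consequence of the functoriality of (\ref{eq_cuntz_dr}): for $o\leq o'$ and $u\in UH_o$ the identity $\jmath_{*,o'o}(u)\circ\jmath_{o'o}=\jmath_{o'o}\circ u$ in $(H_o,H_{o'})^{-1}$ becomes, after applying (\ref{eq_cuntz_dr}), $\wa{\jmath_{*,o'o}(u)}\circ\wa\jmath_{o'o}=\wa\jmath_{o'o}\circ\wa u$, which is precisely relation (\ref{eq_GA00}). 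Restricting $\alpha$ along $\efG\hookrightarrow{\bf U}\efH$ then produces the required gauge action $\efG\times_K\mO_\efH\to\mO_\efH$.

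Next I would identify the fixed-point net bundle. Let $G\subseteq\ud$ be (the closure of) the fibre $Y_a$ of $\efG$ under the standard-fibre identification $UH_a\simeq\ud$; this is a closed, hence compact Lie, subgroup and $\mO_{H_a}^{Y_a}=\mO_d^{G}=\mO_G$. Since $\efG$ is a net subbundle of ${\bf U}\efH$ we have $\jmath_{*,o'o}(Y_o)=Y_{o'}$, and the relation above shows $\wa\jmath_{o'o}$ carries $\mO_{H_o}^{Y_o}$ bijectively onto $\mO_{H_{o'}}^{Y_{o'}}$; hence $\mO_\efH^\efG$ is a net bundle with standard fibre $\mO_G$. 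Because $\wa u$ commutes with the canonical endomorphism $\sigma_{H_o}$ and fixes the symmetry operator $\theta_{H_o}$ for every $u\in UH_o$, the endomorphism $\varsigma$ and the section $\vartheta$ of $\mO_\efH$ given by Prop.\ref{prop_FL01} restrict to $\mO_\efH^\efG$; writing $\varrho:=\varsigma|_{\mO_\efH^\efG}$ we get $\varrho_a=\rho$, $\vartheta_a=\theta$, so $\mO_{\efH;\varrho,\vartheta}^\efG\in{\bf bun}(K;\mO_G,\rho,\theta)$.

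For the holonomy I would argue that, again by functoriality of (\ref{eq_cuntz_dr}), the flat connection of $\mO_\efH$ sends $b\in\Sigma_1(K)$ to $\wa{Z(b)}$, with $Z$ the flat connection of $\efH$; in a trivialisation this yields the holonomy $p\mapsto\wa{\chi(p)}$, where $\chi:\pi_1(K)\to NG$ is the holonomy of $\efH$ supplied by Lemma \ref{lem_FL07}. Restricting to the fixed-point subnet, the holonomy of $\mO_\efH^\efG$ is $p\mapsto\wa{\chi(p)}|_{\mO_G}$, and by Lemma \ref{lem_NG} (equivalently, the injectivity of (\ref{eq_FL04A})) this depends only on $q(\chi(p))\in QG$; hence the holonomy of $\mO_\efH^\efG$ is the one induced by $q\circ\chi:\pi_1(K)\to QG$.

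Finally I would apply Cor.\ref{cor_NET03} to $(\mO_G,\rho,\theta)$ and $\efA_{\varrho,\vartheta}:=\mO_{\efH;\varrho,\vartheta}^\efG$ --- legitimate since $\mO_G$ is generated by $\cup_{rs}(\rho^r,\rho^s)$ (\cite[\S 1,\S 3]{DR87}) --- to conclude that $(\wa\varrho,\wa\jmath,\otimes,\vartheta)_K$ is a symmetric tensor net bundle whose holonomy, under the identification of Prop.\ref{prop_LTGA1a}, coincides with that of $\mO_{\efH;\varrho,\vartheta}^\efG$, i.e.\ with $q\circ\chi$. The isomorphism $\wa\rho_{\otimes,\theta}\simeq\wa\pi_{G;\otimes,\varphi}$ of \cite[Theorem 3.5]{DR87} identifies its fibre with $\wa\pi_G$, so it belongs to ${\bf bun}^\uparrow(K,\wa\pi_{G;\otimes,\varphi})$; and since fibrewise $(\rho^r,\rho^s)$ is, by the same reference, the space of $Y_o$-invariants of $(\sigma^r,\sigma^s)$ and hence a subspace of it, compatibly with $\wa\jmath$, it is a subbundle of the symmetric tensor net bundle $(\wa\varsigma,\wa\jmath,\otimes,\vartheta)_K$ of Prop.\ref{prop_FL01}. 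The main obstacle is not conceptual but bookkeeping: one has to verify that the Cuntz functor (\ref{eq_cuntz_dr}), the connecting isomorphisms $\wa\jmath_{o'o}$, the gauge action and the passage to fixed points are mutually compatible, so that every computation carried out on the standard fibre is faithfully transported around loops in $K$.
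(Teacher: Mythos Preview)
Your proposal is correct and follows essentially the same route as the paper: both obtain the gauge action by applying the Cuntz functor (\ref{eq_cuntz_dr}) to the relation $\jmath_{*,o'o}(u)\circ\jmath_{o'o}=\jmath_{o'o}\circ u$, identify the fixed-point fibre as $\mO_G$ with $G:=Y_a$, read off the holonomy of $\mO_\efH^\efG$ as $q\circ\chi$ via Lemma~\ref{lem_NG}/(\ref{eq_FL04A}), and invoke Cor.~\ref{cor_NET03} for the symmetric tensor net bundle. Your treatment of the subbundle assertion (spelling out that $(\rho^r,\rho^s)$ sits inside $(\sigma^r,\sigma^s)$ as the $Y_o$-invariants) is slightly more explicit than the paper's, which simply cites the inclusion $\mO_{\efH;\varrho,\vartheta}^\efG\subseteq\mO_{\efH;\varsigma,\vartheta}$.
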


\begin{proof}
Writing (\ref{eq_GA00}) for the action $\efG \times_K \efH \to \efH$ we have
$\jmath_{a'a} \circ y = j_{* ; a'a}(y) \circ \jmath_{a'a}$,
$\forall y \in Y_a$,
$a \leq a'$,
so the desired gauge action is defined by composition with the functor (\ref{eq_cuntz_dr}),
i.e.
\[
\wa \jmath_{a'a} \circ \wa y = \wa \jmath_{* ; a'a}(y) \circ \wa \jmath_{a'a}
\ ,
\]
with
$\wa \jmath_{a'a} : \mO_{H,a} \to \mO_{H,a'}$,
$\wa y \in {\bf aut}\mO_{H,a}$, 
and
$\wa \jmath_{* ; a'a}(y) \in {\bf aut}\mO_{H,a'}$. 
Since all the above maps are isomorphisms of symmetric \sC dynamical systems, 
the fixed-point net 
$\mO_\efH^\efG = ( \mO_Y,\wa \jmath)_K$
acquires the structure of dynamical \sC net bundle
$( \mO_\efH^\efG , \varrho , \vartheta )$, 
$\varrho_a := \varsigma_a|_{\mO_{Y,a}} \in {\bf end}\mO_{Y,a}$, $a \in K$.
Fixing a standard fibre $G := Y_a$, $a \in K$, yields 
$\mO_{\efH ; \varrho,\vartheta}^\efG \in {\bf bun}(K;\mO_G,\rho,\theta)$
as desired.
Now, by Lemma \ref{lem_FL07} $\efG$ yields a reduction 
$\chi : \pi_1(K) \to NG$
of the structure group of $\efH$ such that 
$\efG$ has holonomy 
$\ad \chi : \pi_1(K) \to {\bf aut}G$
and $\mO_\efH$ has holonomy 
$\wa \chi : \pi_1(K) \to {\bf aut}_{\sigma,\theta}\mO_d$ 
(obtained by composition with (\ref{eq_cuntz_dr})). By (\ref{eq_FL04A}), any automorphism 
\[
\{ \wa \chi(p) \} |_{\mO_G} \in {\bf aut}_{\rho,\theta}\mO_G 
\ \ , \ \ 
p \in \pi_1(K)
\ ,
\]
is determined by the coset $q \circ \chi(p) \in QG$, 
so $\mO_{\efH ; \varrho,\vartheta}^\efG$ has holonomy $q \circ \chi$.
Finally, $(\wa \varrho,\wa \jmath,\otimes,\vartheta)_K$ has holonomy
$q \circ \chi$ by Corollary \ref{cor_NET03}, and since we have the inclusion 
$\mO_{\efH ; \varrho,\vartheta}^\efG \subseteq \mO_{\efH ; \varsigma,\vartheta}$
we also find that $(\wa \varrho  ,\wa \jmath,\otimes,\vartheta)_K$
is a subbundle of $(\wa \varsigma,\wa \jmath,\otimes,\vartheta)_K$.
\end{proof}

It is natural to ask whether every element of 
${\bf bun}(K;\mO_G,\rho,\theta)$
is of the type
$\mO_{\efH ; \varrho,\vartheta}^\efG$
for some Hilbert net bundle $\efH$ and $\efG \subseteq {\bf U}\efH$. 
In the following result we show that this question is equivalent to a lifting problem. 
\begin{thm}
\label{thm_FL08}
Let $G \subseteq \ud$, $\chi \in \hom(\pi_1(K),QG)$ and 
\[
( \efA_\chi , \tau, \vareps ) \in {\bf bun}(K;\mO_G,\rho,\theta)
\]
be the associated symmetric \sC net bundle. Then the following are equivalent:
\textbf{(1)} There is $\efG \in {\bf bun}(K,G)$ and a $\efG$-Hilbert net bundle $\efH$ with an isomorphism 
\[
( \efA_\chi , \tau , \vareps ) \ \simeq \  ( \mO_\efH^\efG , \varrho , \vartheta ) \ .
\]
\textbf{(2)} There is a lift
\begin{equation}
\label{eq_FL08}
\wt \chi : \pi_1(K) \to NG
\ \ : \ \
q \circ \wt \chi = \chi
\ .
\end{equation}
\textbf{(3)} There is a monomorphism 
\[
(\wa \tau,\wa \jmath,\otimes,\vareps)_K \hra (\wa \varsigma,\wa \jmath,\otimes,\vartheta)_K 
\]
of symmetric tensor net bundles, 
where any $\wa \varsigma_a$, $a \in K$, is isomorphic to the full subcategory of ${\bf Hilb}$
of tensor powers of $\bC^d$.
\end{thm}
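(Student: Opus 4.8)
The plan is to prove the cyclic chain of implications $(2)\Ra(1)\Ra(3)\Ra(2)$. All the tools are available: the equivalence between net bundles and representations of $\pi_1(K)$ (Theorem~\ref{thm_NET00}); the correspondence between reductions of the structure group of a rank $d$ Hilbert net bundle and gauge actions on the associated Cuntz net bundle, together with the description of the fixed-point net bundle (Lemma~\ref{lem_FL07} and Proposition~\ref{prop_FL07}); the classification of symmetric \sC net bundles with fibre $(\mO_G,\rho,\theta)$ by the equivalence class of their holonomy (Corollary~\ref{cor_NET03}); and the classification of symmetric tensor net bundles with fibre the tensor powers of $\bC^d$ (Theorem~\ref{thm_OG} with $G=\{1\}$, together with Proposition~\ref{prop_FL01}). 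The implications $(2)\Ra(1)$ and $(1)\Ra(3)$ are essentially formal; the substance lies in $(3)\Ra(2)$, where Doplicher--Roberts duality enters.

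For $(2)\Ra(1)$: a lift $\wt\chi:\pi_1(K)\to NG\subseteq\ud$ determines, by Theorem~\ref{thm_NET00}, a Hilbert net bundle $\efH=\efB_{\wt\chi}\in{\bf bun}(K,\bC^d)$ with holonomy $\wt\chi$; as $\wt\chi$ is a reduction to $NG$, Lemma~\ref{lem_FL07} yields a group net subbundle $\efG\subseteq{\bf U}\efH$ with fibre $G$, and Proposition~\ref{prop_FL07} endows the fixed-point net bundle $\mO_\efH^\efG$ with the structure of a symmetric \sC net bundle $\mO_{\efH;\varrho,\vartheta}^\efG\in{\bf bun}(K;\mO_G,\rho,\theta)$ of holonomy $q\circ\wt\chi=\chi$. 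Since $(\efA_\chi,\tau,\vareps)$ has holonomy $\chi$ by construction, Corollary~\ref{cor_NET03} produces an isomorphism $(\efA_\chi,\tau,\vareps)\simeq(\mO_\efH^\efG,\varrho,\vartheta)$, which is (1). For $(1)\Ra(3)$: given such an isomorphism, the functorial construction of Corollary~\ref{cor_NET03} gives $(\wa\tau,\wa\jmath,\otimes,\vareps)_K\simeq(\wa\varrho,\wa\jmath,\otimes,\vartheta)_K$, and by the last assertion of Proposition~\ref{prop_FL07} the latter is a subbundle of $(\wa\varsigma,\wa\jmath,\otimes,\vartheta)_K$, whose fibre is the full subcategory of ${\bf Hilb}$ of tensor powers of $\bC^d$; composing gives the required monomorphism.

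For $(3)\Ra(2)$: let $\Phi:(\wa\tau,\wa\jmath,\otimes,\vareps)_K\hra(\wa\varsigma,\wa\jmath,\otimes,\vartheta)_K$ be the given monomorphism, each $\wa\varsigma_a$ being the category of tensor powers of $\bC^d$. By Proposition~\ref{prop_FL01} and Theorem~\ref{thm_OG} for $G=\{1\}$ we may assume $(\wa\varsigma,\wa\jmath,\otimes,\vartheta)_K=(\wa\varsigma_\efH,\wa\jmath,\otimes,\vartheta)_K$ for a Hilbert net bundle $\efH\in{\bf bun}(K,\bC^d)$ with holonomy $\kappa:\pi_1(K)\to\ud$. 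Fix $a\in K$: the fibre map $\Phi_a:\wa\tau_a\to\wa\varsigma_{\efH,a}$ is a faithful symmetric tensor \sC functor, and since $\wa\tau_a\simeq\wa\rho\simeq\wa\pi_{G;\otimes,\varphi}$ has the dimension $d$ generator $\rho$, invariance of the (integer) dimension under symmetric tensor functors forces $\Phi_a(\rho)=H_a\cong\bC^d$, so $\Phi_a$ is a fibre functor of $\wa\pi_G$ into the tensor powers of $\bC^d$. By Doplicher--Roberts duality (\cite[\S 4]{DR89}, \cite[Cor.3.3]{DR87}) such a functor is, up to conjugation by some $w\in\ud$, the tautological inclusion of $\wa\pi_G$; hence the image of $\Phi_a$ is the dual $\wa\pi_{G'_a}$ of the conjugate subgroup $G'_a:=wGw^{-1}\subseteq{\bf U}H_a$. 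Running this at every $o\in K$ and using that $\Phi(\wa\tau)$ is stable under the net structure of $(\wa\varsigma_\efH,\wa\jmath,\otimes,\vartheta)_K$, one checks that $\{G'_o\}_{o\in K}$ is stable under the net structure $\jmath_*$ of ${\bf U}\efH$ and, after conjugating so that the standard fibre is $G$, defines a group net subbundle $\efG\subseteq{\bf U}\efH$ with fibre $G$ whose fixed-point tensor subbundle coincides with $\Phi(\wa\tau)$. Lemma~\ref{lem_FL07} then supplies a reduction $\wt\chi:\pi_1(K)\to NG$ of $\kappa$, and by Proposition~\ref{prop_FL07} the holonomy of $\Phi(\wa\tau)\simeq(\wa\tau,\wa\jmath,\otimes,\vareps)_K$, namely $\chi$, equals $q\circ\wt\chi$; more precisely $q\circ\wt\chi\sim\chi$ in ${\bf aut}\wa\pi_G\simeq QG$ (Lemma~\ref{lem_NG}), and conjugating $\wt\chi$ by a lift in $NG$ of the conjugating coset turns it into a genuine lift of $\chi$, which is (2).

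The step I expect to be the main obstacle is the fibrewise input to $(3)\Ra(2)$: invoking the Doplicher--Roberts reconstruction to recognise an abstract faithful symmetric tensor \sC embedding of $\wa\pi_G$ into the tensor powers of $\bC^d$ as, up to a unitary conjugation, the tautological inclusion, so that its image is again a group dual attached to a conjugate of $G$; and then verifying that these pointwise identifications are compatible with the net structures, so that the subgroups $G'_o$ genuinely assemble into a group net subbundle of ${\bf U}\efH$. The remaining point, that the holonomy of the image is matched with $\chi$ only up to conjugacy, is harmless because ${\bf aut}\wa\pi_G\simeq QG$, so the conjugating automorphism lifts through $q:NG\to QG$; the same remark reconciles ``holonomy equal to $\chi$'' with ``holonomy equivalent to $\chi$'' throughout the argument.
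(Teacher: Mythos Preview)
Your $(2)\Ra(1)$ and $(1)\Ra(3)$ coincide with the paper's proof. For $(3)\Ra(2)$ your argument is correct but more elaborate than the paper's. You reconstruct the gauge group fibrewise via Doplicher--Roberts duality, assemble the subgroups $G'_o$ into a net subbundle of ${\bf U}\efH$, and then feed this back through Lemma~\ref{lem_FL07} and Proposition~\ref{prop_FL07} to extract the lift. The paper bypasses all of this: once $\Phi(\wa\tau)$ sits as a sub-net-bundle of $\wa\varsigma$, the holonomy
\[
\kappa:\pi_1(K)\to{\bf aut}\,\wa\varsigma_{a;\otimes,\vartheta}\ \simeq\ {\bf aut}_{\sigma,\theta}\mO_d\ \simeq\ \ud
\]
of the ambient bundle automatically preserves the sub-fibre $\Phi_a(\wa\tau_a)$; via Proposition~\ref{prop_LTGA1a} this restriction lands in ${\bf aut}_{\rho,\theta}\mO_G$, so $\kappa(p)\in{\bf aut}^G\mO_d$, and Lemma~\ref{lem_NG} then gives $\kappa(p)\in NG$ directly (after the same one-off conjugation you perform to normalise the standard fibre to $G$). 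The restriction of $\kappa$ to the sub-bundle is the holonomy of $\wa\tau$, namely $\chi$, so $q\circ\kappa=\chi$. Your route has the merit of making the gauge bundle $\efG$ explicit already within $(3)\Ra(2)$, but since $\efG$ is supplied anyway by the implication $(2)\Ra(1)$, the paper's shortcut is cleaner and sidesteps exactly the delicate assembly step you flagged as the main obstacle.
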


\begin{proof}
{\bf (1) $\Rightarrow$ (2)}: By Lemma \ref{lem_FL07} the Hilbert net bundle $\efH$ has a reduction ${\wt \chi}' \in \hom(\pi_1(K),NG)$; moreover, Prop.\ref{prop_FL07} implies that $\mO_\efH^\efG$ has holonomy $\chi' := q \circ {\wt \chi}'$. Using (\ref{eq_thm_FL08}) and the isomorphism 
$\efA_{\chi ; \tau,\vareps} \simeq \mO^\efG_{\efH ; \varrho,\vartheta}$ 
we find $y \in QG$ such that
\[
\chi'(p) = y \chi(p) y^{-1}
\ \ , \ \
\forall p \in \pi_1(K)
\ .
\]
Thus picking some $u \in q^{-1}(y)$ and defining 
$\wt \chi (p) := u^{-1} \wt {\chi'}(p) u$, $p \in \pi_1(K)$, yields the desired lift.
{\bf (2) $\Rightarrow$ (1)}: We regard $NG$ as a subgroup of $\ud$ and consider the Hilbert net bundle $\efH \in {\bf bun}(K,\bC^d)$ with holonomy representation $\wt \chi$. By Lemma \ref{lem_FL07} there is a group net subbundle $\efG$ of ${\bf U}\efH$, and Prop.\ref{prop_FL07} implies that 
$(\mO_\efH^\efG,\varrho,\vartheta)$ 
has holonomy representation $\chi$. Thus $\efA_\chi$ and $\mO_\efH^\efG$ have the same holonomy 
$\chi \in \hom(\pi_1(K),QG)$, 
and by (\ref{eq_thm_FL08}) we obtain the desired isomorphism.
{\bf (3) $\Rightarrow$ (2)}: Since $\wa \tau$ is a symmetric tensor net subbundle of $\wa \varsigma$,
the holonomy 
\[
\chi : \pi_1(K) \to 
{\bf aut} \wa \varsigma_{a ; \otimes,\theta} \ \simeq \
{\bf aut}_{\sigma,\theta}\mO_d \ \simeq \
\ud
\]
restricts to a holonomy with values in the automorphism group of the subcategory 
$\wa \tau_{a ; \otimes,\theta} \subseteq \wa \varsigma_{a ; \otimes,\theta}$.
By Prop.\ref{prop_LTGA1a}, these automorphisms yields automorphisms in ${\bf aut}_{\rho,\theta}\mO_G$,
and this is equivalent to say, by Lemma \ref{lem_NG}, that
$\chi(p) \in {\bf aut}^G\mO_d$ for all $p \in \pi_1(K)$.
But, again by Lemma \ref{lem_NG}, this means that $\efH$ has holonomy $u : \pi_1(K) \to NG$
such that $\chi(p) = \wa u(p)$, $\forall p \in \pi_1(K)$.
{\bf (1) $\Rightarrow$ (3)}: The isomorphism
$\efA_{\chi ; \tau,\vareps} \simeq \mO^\efG_{\efH ; \varrho,\vartheta}$ 
yields an isomorphism
$(\wa \tau,\wa \jmath,\otimes,\vareps)_K \simeq (\wa \varrho,\wa \jmath,\otimes,\vartheta)_K$,
and this last is a symmetric tensor net subbundle of
$(\wa \varsigma,\wa \jmath,\otimes,\vartheta)_K$.
The fact that $\wa \varsigma$ has the desired fibre follows by Prop.\ref{prop_FL01}.
\end{proof}

%
%

With the notation of the previous theorem, we say $\efG$ is a \emph{gauge group} of 
$\efA_{\chi ; \varrho,\vareps}$.
In the following lines we give examples in which $\efG$ is not uniquely determined by 
$\efA_{\chi ; \varrho,\vareps}$,
and in which $\efG$ does not exist at all.

\

\noindent {\bf The case $G = \sud$ and non-uniqueness of the gauge group.} 
Let $d \in \bN$ and $\efH \in {\bf bun}(K,\bC^d)$ be a rank $d$ Hilbert net bundle. By Theorem \ref{thm_NET00} there is a holonomy representation $\chi : \pi_1(K) \to \ud$ identifying $\efH$ up to isomorphism. Composing $\chi$ with the determinant yields a morphism 
\begin{equation}
\label{eq_FL03A}
\efc_1(\efH) \in \hom (\pi_1(K),\bT)
\ ,
\end{equation}
called the \emph{first Chern class of} $\efH$. In particular, if $M$ is a manifold and $\Delta$ is a good base of $M$, then (\ref{eq_BK01}) and the Hurewicz theorem imply that $\hom (\pi_1(\Delta),\bT)$ is isomorphic to the singular cohomology group $H^1(M,\bT)$, and we may regard $\efc_1(\efH)$ as an element of $H^1(M,\bT)$. This yields the characteristic class
\[
\efc_1 : {\bf bun}(\Delta,{\bf Hilb}) \to H^1(M,\bT)
\ \ : \ \
\efc_1 (\efH \oplus \efH') = \efc_1 (\efH) \cdot \efc_1(\efH')
\ ,
\]
which, in essence, is the first Cheeger-Chern-Simons class (see \cite[Example 1.5]{CS85}) of the flat Hermitian vector bundle $\mE \to M$ associated with $\efH$ in the sense of \S \ref{sec_lc}.

Let $G = \sud$. Then $NG = \ud$, $QG = \bT$ and $q : \ud \to \bT$ is the determinant. Thus we have the one-to-one correspondence
\[
\hom(\pi_1(K),\bT) \to {\bf bun}(K;\mO_\sud,\rho,\theta)
\ \ , \ \
\chi \mapsto (\efA_\chi,\tau,\vareps)
\ .
\]
Now, the determinant map $q : \ud \to \bT$ admits left inverses $s : \bT \to \ud$, $q \circ s = id_\bT$ (for example, we may define $s(z)$, $z \in \bT$, as the diagonal matrix with entries $(z,1,\ldots,1)$). Thus every $\chi \in \hom(\pi_1(K),\bT)$ has a lift $\wt \chi := s \circ \chi$, and the map
\[
q_* : \hom(\pi_1(K),\ud)  \to  \hom (\pi_1(K),\bT)
\]
is surjective. Applying Theorem \ref{thm_NET00} to the category of Hilbert spaces we find that every morphism $\nu \in \hom ( \pi_1(K), \ud )$ defines the Hilbert net bundle $\efH_\nu \in {\bf bun}(K,\bC^d)$. Since $q_*$ is induced by the determinant, by definition of first Chern class  (see (\ref{eq_FL03A})) we find $\efc_1(\efH_\nu) = q_* (\nu)$. Thus we proved:

\begin{prop}
\label{prop_sud}
Let $\chi \in \hom(\pi_1(K),\bT)$ and 
$\efA_{\chi ; \tau,\vareps} \in {\bf bun}(K;\mO_\sud,\rho,\theta)$
denote the associated symmetric \sC net bundle. 
Then for any $\efH \in {\bf bun}(K,\bC^d)$ we have
\[
\efc_1(\efH) = \chi
\ \ \Leftrightarrow \ \
(\mO_\efH^{{\bf SU} \efH},\varrho,\vartheta)
\ \simeq \
(\efA_\chi,\tau,\vareps)
\ .
\]
\end{prop}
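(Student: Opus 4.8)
The plan is to reduce the assertion to the classification of pointed dynamical \sC net bundles with fibre $(\mO_\sud,\rho,\theta)$ by their holonomy, which for $G=\sud$ takes values in the \emph{abelian} group $QG=\bT$, so that conjugacy of holonomies collapses to equality. First I would record the geometric input: by Theorem \ref{thm_NET00} the Hilbert net bundle $\efH$ carries a holonomy representation $\chi_\efH\in\hom(\pi_1(K),\ud)$, well defined up to conjugation, and by the very definition (\ref{eq_FL03A}) we have $\efc_1(\efH)=q\circ\chi_\efH$ with $q=\det:\ud\to\bT$; this is independent of the choice of $\chi_\efH$, since conjugate matrices have equal determinant.

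Next I would compute the holonomy of $\mO_\efH^{{\bf SU}\efH}$. Since $N\sud=\ud$, the condition $\chi_\efH(\pi_1(K))\subseteq N\sud$ is automatic, so by Lemma \ref{lem_FL07} the family ${\bf SU}\efH$ is exactly the group net subbundle of ${\bf U}\efH$ with fibre $\sud$ attached to this (trivial) reduction. Applying Prop.\ref{prop_FL07} with $G=\sud$ then shows that $(\mO_\efH^{{\bf SU}\efH},\varrho,\vartheta)$ lies in ${\bf bun}(K;\mO_\sud,\rho,\theta)$ and has holonomy representation $q\circ\chi_\efH:\pi_1(K)\to\bT$, that is, precisely $\efc_1(\efH)$.

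On the other side, $(\efA_\chi,\tau,\vareps)$ is by construction the element of ${\bf bun}(K;\mO_\sud,\rho,\theta)$ with holonomy $\chi$ furnished by the map $(\bullet)$ of Theorem \ref{thm_OG} (equivalently, by Cor.\ref{cor_NET03} together with (\ref{eq_NET03A})). For $G=\sud$ the map (\ref{eq_FL04A}), and hence $(\bullet)$, are isomorphisms, so two elements of ${\bf bun}(K;\mO_\sud,\rho,\theta)$ are isomorphic as pointed dynamical \sC net bundles if and only if their holonomies are conjugate in $QG=\bT$; since $\bT$ is abelian, this means the two holonomies coincide in $\hom(\pi_1(K),\bT)$. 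Combining the two computations gives $(\mO_\efH^{{\bf SU}\efH},\varrho,\vartheta)\simeq(\efA_\chi,\tau,\vareps)$ if and only if $\efc_1(\efH)=q\circ\chi_\efH=\chi$, which is the claim.

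The point needing most care — and where I expect the real content to sit — is that isomorphism of pointed dynamical \sC net bundles is detected \emph{exactly} by conjugacy of the holonomies, with the canonical pointed structures $(\varrho,\vartheta)$ and $(\tau,\vareps)$ on the two sides automatically matched: this is what the chain (\ref{eq_NET03A}), together with the fact that (\ref{eq_FL04A}) is an isomorphism for $G\subseteq\sud$, provides, but one should note explicitly that the symmetry sections $\vartheta$ and $\vareps$ are both obtained by transporting $\theta$ along the flat connection, hence correspond under any holonomy-respecting isomorphism. Everything else is bookkeeping with results already in hand.
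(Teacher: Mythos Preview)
Your proof is correct and follows essentially the same approach as the paper: the paper's argument is the brief paragraph immediately preceding the proposition, which notes that $\efc_1(\efH_\nu)=q_*(\nu)=\det\circ\nu$ and then implicitly invokes Prop.~\ref{prop_FL07} and the bijection (\ref{eq_thm_FL08}) for $G=\sud$. You have simply made explicit the steps the paper leaves to the reader, including the observation that conjugacy in the abelian group $\bT$ collapses to equality.
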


\begin{rem}{\it 
\label{rem_break}
Examples of Hilbert net bundles $\efH , \efH'$ with non-isomorphic special unitary bundles but such that $\efc_1(\efH) = \efc_1(\efH')$ can be easily constructed (see below). By Prop.\ref{prop_FL01} we conclude that despite ${\bf SU} \efH$ and ${\bf SU} \efH'$ are not isomorphic, there can be an isomorphism 
\[
(\mO_\efH^{{\bf SU} \efH},\varrho,\vartheta)
\ \simeq \
(\mO_{\efH'}^{{\bf SU} \efH'},\varrho',\vartheta')
\ .
\]
} 
\end{rem}

Let us consider the case $\pi_1(K) = \bZ$. This occurs in interesting examples in which $K$ is a good base for the topology of 
the $1$--sphere $S^1$, 
anti-de Sitter spacetimes $adS_n \simeq S^1 \times \bR^{n-1}$, $n > 1$, and
de Sitter spacetime $dS_2 \simeq S^1 \times \bR$.
Now, any $\chi \in \hom(\bZ,\bT)$ is uniquely determined by $\chi(1) \in \bT$ and we find
\[
{\bf bun}(K;\mO_\sud,\rho,\theta)
\ \simeq \ 
\hom ( \bZ , \bT ) 
\ \simeq \ 
\bT 
\ .
\]
On the other hand, any $\chi \in \hom(\bZ,\ud)$ has a decomposition $\chi = \oplus_k^d \chi_k$, where $\chi_k \in \hom ( \bZ , \bT )$. Clearly, changing the order of the $\chi_k$ does not change the class of $\chi$ in $\unl \hom(\bZ,\ud)$; thus, applying (\ref{eq_thm_FL08}) with $G = \{ 1 \}$, we conclude that
\[
\unl{\bf bun}(K;\mO_d,\sigma,\theta) 
\ \simeq \ 
\unl{\hom}( \bZ , \ud )
\ \simeq \ 
\bT^d / \bP_d \ ,
\]
where $\bT^d / \bP_d$ is the quotient of the torus $\bT^d$ under the action of the permutation group $\bP_d$. Hence the first Chern class is given by the (not injective!) epimorphism
\[
\efc_1 : \bT^d / \bP_d \to \bT
\ \ , \ \
\efc_1 [ z_1 , \ldots , z_d  ] \, := \, \prod_k z_k \ .
\]

\noindent {\bf Absence of the lift.} Let $G \simeq \bT$ be the torus acting on the Hilbert space $H$ of dimension $d > 1$, so that $NG = \ud$ and $QG = \bPU(d)$, the projective unitary group. We consider a finite group $\Gamma$ with nontrivial cohomology $H^2(\Gamma,\bT)$ (for example, the permutation group $\bP_n$, $n \geq 4$, see \cite[Theorem 2.9]{HH}) and a projective representation 
$\chi : \Gamma \to \bPU(d)$
with nontrivial class $\delta_\chi \in H^2(\Gamma,\bT)$, so that we cannot find a lift $\wt \chi : \Gamma \to \ud$.
By the Eilenberg-McLane construction (\cite[\S 1.B]{H}), there are a space $M$ with $\pi_1(M) = \Gamma$ and a good base $\Delta$ of $M$ (\cite[Prop.A.4]{H}). 
Using the isomorphism 
$\Gamma = \pi_1(\Delta) \simeq \pi_1(M)$,
we obtain the desired class of examples by means of the one-to-one correspondence
\[
\hom(\Gamma,\bPU(d)) \to {\bf bun}(\Delta;\mO_G,\rho,\theta)
\ \ , \ \
\chi \mapsto (\efA_\chi,\tau,\vareps)
\ .
\]


%

\section{Symmetric tensor presheaves: invariants and Tannaka duality.}
\label{sec_CCS}

In this section we study symmetric tensor presheaves $\efT$ with simple units.
As a first step we show that any section $\varrho \in \obj \wt \efT$ defines a holonomy representation
$\chi_\varrho : \pi_1(K) \to QG_\varrho := NG_\varrho / G_\varrho$
describing the category of tensor powers of $\varrho$,
where $G_\varrho \subseteq \ud$ is a suitable compact Lie group. 
When $K$ is a base, applying the equivalence with the category of flat bundles (\S \ref{sec_lc})
we assign to $\varrho$ Cheeger-Chern-Simons classes describing
the obstruction to $\chi_\varrho$ being trivial in geometric terms.
The second step is our main result, in which we prove that any embedding of $\efT$
defines a "gauge" group net bundle, the analogue of the dual group of Tannaka duality.
Existence and uniqueness of the embedding are not ensured
due to the lifting problem for the holonomies 
$\chi_\varrho$, $\varrho \in \obj \wt \efT$,
and we describe this situation in terms of a non-abelian 1--cocycle with coefficients
in the crossed module $G_\varrho \to NG_\varrho$.
Clearly, passing to the opposite poset our results apply to symmetric tensor nets with simple units.

\

\noindent {\bf Presheaves of tensor \sC categories and their invariants.}
The following notion is a categorical counterpart of the one of
conjugate representation of a compact group (\cite[\S 1]{DR89}), and is motivated
by the correspondence particle-antiparticle (\cite[Eq.3.21-3.22]{DR90}):
we say that a tensor \sC category $T_\otimes$ has \emph{conjugates} 
whenever for any $\rho \in \obj T$ there is $\ovl \rho \in \obj T$ and 
$R      \in (\iota,\ovl \rho \rho)$, 
$\ovl R \in (\iota,\rho \ovl \rho)$ 
solving the \emph{conjugate equations}
\begin{equation}
\label{eq.conj}
(\ovl R^* \otimes 1_\rho) \circ (1_\rho \otimes R) = 1_\rho
\ \ , \ \
(R^* \otimes 1_{\ovl \rho}) \circ (1_{\ovl \rho} \otimes \ovl R) = 1_{\ovl \rho} \ ,
\end{equation}
where $\iota$ is the identity object of $T$.

A symmetric tensor \sC category $D_{\otimes,\eps}$ with simple unit having conjugates,
direct sums and subobjects is called a \emph{DR-category}. 
By \cite[Theorem 6.9]{DR89} there is a symmetric tensor embedding
$J : D_{\otimes,\eps} \to {\bf Hilb}$,
unique up to unitary tensor natural transformations, such that 
$J(D_{\otimes,\eps})$
is the dual of a compact group $G$ 
(that is, the subcategory of {\bf Hilb} with arrows $G$-equivariant linear operators).

\begin{defn}
Let $\efT = (T,r,\otimes,\vareps)^K$ be a symmetric tensor presheaf with simple units.
We say that $\efT$ is a \textbf{DR-presheaf} whenever the section category $\wt \efT$ is a DR-category.
\end{defn}

By Prop.\ref{prop.NET1}, if $\efT$ is a DR-presheaf then the associated 
presheaf bundle ${}_\beta \efT$ is a DR-presheaf.

\begin{lem}
\label{lem_G}
Let $\efT$ be a DR-presheaf and ${}_\beta F := {}_\beta T_a$ denote the standard fibre of ${}_\beta \efT$.
Then there are, unique up to isomorphisms, a compact group $G$ and a symmetric tensor embedding
\[
I \, : \, {}_\beta F \to {\bf Hilb} \ ,
\]
such that $I({}_\beta F)$ is a full subcategory of the dual of $G$.
\end{lem}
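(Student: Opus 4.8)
The plan is to reduce the statement to the Doplicher--Roberts reconstruction theorem \cite[Theorem 6.9]{DR89}, which is already quoted above. By Prop.\ref{prop.NET1}, the section category $\wt{{}_\beta\efT}$ is \sC isomorphic to $\wt\efT$, hence is a DR-category by hypothesis; and by the remark following the definition of DR-presheaf, ${}_\beta\efT$ is itself a DR-presheaf. So it suffices to work with the presheaf bundle ${}_\beta\efT$ and its typical fibre ${}_\beta F := {}_\beta T_a$. The first step is to recall that ${}_\beta\efT$, being a presheaf \emph{bundle}, has all restriction functors ${}_\beta r_{aa'}$ isomorphisms, so the evaluation functor $\pi_a : \wt{{}_\beta\efT} \to {}_\beta F$ (restriction of a section to its component at $a$, as in the proof of Prop.\ref{prop.NET1}) is a \emph{full} and \emph{faithful} symmetric tensor \sC functor; moreover every object of ${}_\beta F$ is of the form $\varrho^a = \pi_a(\wt\varrho)$ for a section $\wt\varrho$, so $\pi_a$ is essentially surjective and therefore an equivalence of symmetric tensor \sC categories. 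In particular ${}_\beta F$ inherits from $\wt{{}_\beta\efT} \simeq \wt\efT$ the properties of being simple, symmetric, and having conjugates, direct sums and subobjects — that is, ${}_\beta F$ is itself a DR-category.

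The second step is simply to apply \cite[Theorem 6.9]{DR89} to the DR-category ${}_\beta F$: this yields a compact group $G$ and a symmetric tensor \sC monofunctor $J : {}_\beta F \to {\bf Hilb}$, unique up to unitary tensor natural transformation, whose image $J({}_\beta F)$ is the dual of $G$ — equivalently a \emph{full} subcategory of ${\bf Hilb}$ with arrows the $G$-equivariant operators. Setting $I := J$ gives the desired monofunctor; since $J({}_\beta F)$ is the \emph{whole} dual of $G$, it is in particular a full subcategory of the dual of $G$, which is the (slightly weaker) statement asserted in the lemma. (The weakening to "full subcategory" is harmless here and is presumably stated this way for compatibility with later results where the relevant fibre is only a full subcategory of a dual; for the present lemma one gets the stronger conclusion on the nose.)

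The third step is uniqueness. Suppose $(G',I')$ is another such pair. Then $J' := I'$ realizes ${}_\beta F$ as (a full subcategory of) the dual of $G'$; composing with an equivalence one obtains two symmetric tensor \sC monofunctors ${}_\beta F \to {\bf Hilb}$ each presenting ${}_\beta F$ as a group dual, and the uniqueness clause of \cite[Theorem 6.9]{DR89} gives a unitary tensor natural transformation between them, which forces $G \simeq G'$ as compact groups (the group being recovered, à la Tannaka--Krein, as the group of unitary monoidal natural transformations of the embedding functor). I do not expect any genuine obstacle in this argument: the only point requiring a little care is the first step — verifying that $\pi_a$ is an \emph{equivalence} of \emph{symmetric tensor} \sC categories, so that the DR-category structure genuinely transports from $\wt\efT$ to the fibre ${}_\beta F$ — but this is exactly what the construction in the proof of Prop.\ref{prop.NET1} was designed to guarantee (fullness of the ${}_\beta r_{aa'}$ on arrows and bijectivity on objects), together with the observation that $\pi_a$ manifestly commutes with $\otimes$ and sends $\vareps_{\varrho\varsigma}$ to $\vareps_a(\varrho_a,\varsigma_a)$ by definition.
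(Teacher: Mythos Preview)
Your argument hinges on the claim that the evaluation functor $\pi_a : \wt{{}_\beta\efT} \to {}_\beta F$ is \emph{full}, hence an equivalence of symmetric tensor \sC categories; but this step is not justified, and you are conflating two different fullness statements. What Prop.~\ref{prop.NET1} guarantees is that each ${}_\beta r_{aa'}$ is bijective on arrows, so that ${}_\beta\efT$ is a presheaf \emph{bundle}; this makes $\pi_a$ faithful and bijective on objects, but fullness of $\pi_a$ would say that every $f \in {}_\beta(\varrho^a,\varsigma^a)$ extends to a \emph{global} section of ${}_\beta\efT$, i.e.\ is fixed by the holonomy $Z_\gamma$ of (\ref{def_zgamma}) for every loop $\gamma \in K(a)$. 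There is no reason for this: by definition ${}_\beta(\varrho^a,\varsigma^a) = {}_\phi(\varrho_a,\varsigma_a)$ consists only of evaluations of sections of $\efT_{\geq a}$, and Theorem~\ref{thm.hol} shows that the holonomy acting on such arrows is precisely the (generically nontrivial) invariant $\chi_\varrho$. So one cannot transport the DR-structure of $\wt\efT$ to ${}_\beta F$ along an equivalence.

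The paper therefore argues differently: it verifies \emph{directly} that ${}_\beta F$ has conjugates and direct sums, by evaluating at $a$ the global-section data coming from $\wt\efT$ (the arrows $R_a,\ovl R_a$ and $V_{\varrho,a},V_{\varsigma,a}$ lie in ${}_\beta F$ because they are components of global sections). It then observes that subobjects may fail --- a projection $E \in {}_\beta(\varrho^a,\varrho^a)$ need not come from any global section, so there is no candidate $\varsigma \in \obj\wt\efT$ to split it --- and passes to the subobject-closure ${}_\beta\ovl F$, invoking \cite[Theorem 2.4]{LR97} to ensure conjugates persist there, before applying \cite[Theorem 6.9]{DR89}. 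Your steps 2 and 3 are fine once one knows the relevant category is a DR-category; it is step 1 that must be replaced by this hands-on verification.
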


\begin{proof}
We already know that ${}_\beta F$ is a symmetric tensor \sC category with simple unit,
so, as a first step, we prove that ${}_\beta F$ has conjugates.
To this end we note that, given $\varrho^a \in \obj {}_\beta F$ (that is, $\varrho \in \obj \wt \efT$), 
by hypothesis there is a triple
$\ovl \varrho \in \obj \wt \efT$,
$R      \in (i,\ovl \varrho \varrho)$, 
$\ovl R \in (i,\varrho \ovl \varrho)$ 
solving (\ref{eq.conj}) in $\wt \efT$, so the triple
$\ovl \varrho^a$, $R_a$, $\ovl R_a$
solves (\ref{eq.conj}) in ${}_\beta F$. 
This proves that ${}_\beta F$ has conjugates.
Moreover ${}_\beta F$ has direct sums, in fact given 
$\varrho^a , \varsigma^a \in \obj {}_\beta F$
there is a direct sum $\tau \in \obj \wt \efT$ with orthogonal isometries 
$V_\varrho \in (\varrho,\tau)$, $V_\varsigma \in (\varsigma,\tau)$,
so considering the evaluations 
$V_{\varrho,a}$, $V_{\varsigma,a}$ 
we have that $\tau^a$ is a direct sum of $\varrho^a,\varsigma^a \in \obj {}_\beta F$.
In general ${}_\beta F$ may be not closed under subobjects: so we pass to its closure
${}_\beta \ovl F$, having objects projections of 
${}_\beta(\varrho^a,\varsigma^a)$, $\forall \varrho^a,\varsigma^a \in \obj {}_\beta F$.
Routine computations (\cite[Theorem 2.4]{LR97}) show that ${}_\beta \ovl F$ has conjugates (besides direct sums and subobjects),
so it is a DR-category and we get the desired compact group $G$ and embedding $I$.
Their uniqueness follows by the fact that ${}_\beta F$ is unique up to isomorphism.
\end{proof}

We now consider the subpresheaf bundle ${}_\beta \wa \varrho$ of ${}_\beta \efT$ with fibres the full subcategories 
${}_\beta \wa \varrho^o \subseteq {}_\beta T_o$, $o \in K$,
of tensor powers of $\varrho^o \in \obj {}_\beta T_o$. Before to give the following result,
we recall the reader to the notation of Theorem \ref{thm_OG}.
\begin{thm}[Holonomy]
\label{thm.hol}
Let $\efT = (T,r,\otimes,\vareps)^K$ be a DR-presheaf. 
Then for any section $\varrho \in \obj \wt \efT$ there are a compact Lie group
$G_\varrho \subseteq \ud$
such that 
${}_\beta \wa \varrho \in {\bf pbun}^\uparrow(K,\wa \pi_{G_\varrho;\otimes,\theta})$,
and a morphism
$\chi_\varrho : \pi_1(K) \to QG_\varrho := NG_\varrho / G_\varrho$
such that ${}_\beta \wa \varrho$ has holonomy $\chi_\varrho$.
\end{thm}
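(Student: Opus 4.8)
The strategy is to reduce the statement to the purely group-theoretic and net-bundle machinery already assembled: Lemma~\ref{lem_G}, the Doplicher--Roberts reconstruction applied fibrewise, and Theorem~\ref{thm_OG}. First I would fix the section $\varrho \in \obj \wt\efT$ and form the subpresheaf bundle ${}_\beta \wa \varrho$ of ${}_\beta \efT$ whose fibre over $o \in K$ is the full symmetric tensor subcategory of ${}_\beta T_o$ generated by the tensor powers of $\varrho^o$. Its typical fibre is the full subcategory ${}_\beta \wa\varrho^a \subseteq {}_\beta F$ of tensor powers of $\varrho^a$, which is a simple symmetric tensor \sC category; applying Lemma~\ref{lem_G} (or rather the reconstruction inside the DR-closure ${}_\beta \ovl F$ produced there) yields a compact group $G$ and a symmetric tensor \sC monofunctor sending this fibre into the dual of $G$. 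Because ${}_\beta\wa\varrho^a$ is \emph{singly generated} by the one object $\varrho^a$ of some dimension $d$, its image under $I$ is (equivalent to) the category $\wa\pi_{G_\varrho}$ of tensor powers of a $d$-dimensional unitary representation of a compact group, which after composing with that representation we may take to be a closed subgroup $G_\varrho \subseteq \ud$; since $\varrho$ has conjugates and finite dimension, $G_\varrho$ is in fact a compact \emph{Lie} group (a closed subgroup of $\ud$). This identifies the typical fibre of ${}_\beta\wa\varrho$ with $\wa\pi_{G_\varrho;\otimes,\theta}$, so ${}_\beta\wa\varrho \in {\bf pbun}^\uparrow(K,\wa\pi_{G_\varrho;\otimes,\theta})$.

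Next I would extract the holonomy. Regarding ${}_\beta\wa\varrho$ as a net bundle of symmetric tensor \sC categories (via the isomorphism ${\bf pbun}^\uparrow \simeq {\bf bun}^\uparrow$ on the opposite poset), Theorem~\ref{thm_NET00} applied to the category ${\bf T}$ of small symmetric tensor \sC categories produces a holonomy representation into the group of symmetric tensor \sC autofunctors of the typical fibre, i.e. into ${\bf aut}\,\wa\pi_{G_\varrho;\otimes,\varphi}$. The crucial identification is then the analogue of Lemma~\ref{lem_NG}/equation~(\ref{eq.NG}): the symmetric tensor autofunctors of $\wa\pi_{G_\varrho}$ are exactly the $\wa u$ with $u \in NG_\varrho$, two such agreeing iff they differ by an element of $G_\varrho$, so ${\bf aut}\,\wa\pi_{G_\varrho;\otimes,\varphi} \simeq NG_\varrho/G_\varrho = QG_\varrho$. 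Composing the holonomy of ${}_\beta\wa\varrho$ with this isomorphism gives the desired $\chi_\varrho : \pi_1(K) \to QG_\varrho$, and by construction ${}_\beta\wa\varrho$ has holonomy $\chi_\varrho$. (Concretely this is the same map one gets by applying Theorem~\ref{thm_OG}, part $(\bullet\bullet\bullet)$, which is always an isomorphism, to read off the holonomy of a net bundle with fibre $\wa\pi_{G_\varrho;\otimes,\varphi}$.)

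I would then check well-definedness and independence of the choices made in Lemma~\ref{lem_G}: the compact group $G_\varrho$ and the embedding $I$ are unique up to isomorphism, and a change of embedding replaces $G_\varrho$ by an isomorphic group and conjugates the holonomy, so $\chi_\varrho$ is well defined up to the equivalence on $\hom(\pi_1(K),QG_\varrho)$ — which is all that is claimed. Similarly a change of base point $a \to a'$ conjugates $\chi_\varrho$ inside $QG_\varrho$, by \cite[Lemma 4.27]{RR06}.

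\textbf{Main obstacle.} The delicate point is not the net-bundle formalism, which is routine given Theorem~\ref{thm_NET00}, but pinning down the typical fibre: one must argue that the DR-reconstruction applied to the singly-generated category ${}_\beta\wa\varrho^a$ really yields the tensor-power category $\wa\pi_{G_\varrho}$ of a \emph{single} $d$-dimensional representation with $G_\varrho \subseteq \ud$ a compact Lie group — i.e. that passing to the subobject-closure ${}_\beta\ovl F$ and then back to the full subcategory of tensor powers of $\varrho^a$ does not lose the identification, and that $d = \dim\varrho$ is finite because $\varrho$ has conjugates. Equally, one must verify that the symmetry $\vareps$ on $\wt\efT$ maps under $I$ to the standard flip $\varphi$ (so that we land in $\wa\pi_{G_\varrho;\otimes,\varphi}$ and not some twisted version), which is exactly the content of the symmetric part of the Doplicher--Roberts duality \cite[Theorem 6.9]{DR89}. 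Once these fibrewise identifications are in place, the statement follows by transporting the holonomy through the equivalence ${\bf aut}\,\wa\pi_{G_\varrho;\otimes,\varphi} \simeq QG_\varrho$.
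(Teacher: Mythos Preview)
Your overall architecture is correct and coincides with the paper's: identify the fibre ${}_\beta\wa\varrho^a$ with $\wa\pi_{G_\varrho}$ via Lemma~\ref{lem_G}, then read off a holonomy from the presheaf-bundle structure. The gap is in the step where you assert
\[
{\bf aut}\,\wa\pi_{G_\varrho;\otimes,\varphi}\ \simeq\ NG_\varrho/G_\varrho = QG_\varrho .
\]
Equation~(\ref{eq.NG}) only says that the map $u\mapsto \wa u|_{\wa\pi_{G_\varrho}}$ sends $NG_\varrho$ into the symmetric tensor autofunctors of $\wa\pi_{G_\varrho}$ with kernel $G_\varrho$; it does \emph{not} assert surjectivity. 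In fact the paper records (just after (\ref{eq_FL04A}) and in Theorem~\ref{thm_OG}) that the corresponding map $QG \to {\bf aut}_{\rho,\theta}\mO_G \simeq {\bf aut}(\wa\pi_{G;\otimes,\varphi})$ is known to be an isomorphism only when $G\subseteq\sud$ or $G=\bT$. So from the abstract holonomy of ${}_\beta\wa\varrho$ alone you cannot conclude that it lands in $QG_\varrho$; invoking Theorem~\ref{thm_OG}($\bullet\bullet\bullet$) does not help, since that is an isomorphism between two classes of net bundles, not between ${\bf aut}\,\wa\pi_{G_\varrho}$ and $QG_\varrho$.

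The paper avoids this difficulty by exploiting extra structure you have thrown away: the holonomy autofunctor $Z_\gamma$ is defined on \emph{all} of ${}_\beta F$, not merely on ${}_\beta\wa\varrho^a$, and ${}_\beta F$ has a DR-closure ${}_\beta\ovl F$. One then compares the two embeddings $I$ and $I\circ Z_\gamma$ of ${}_\beta F$ into ${\bf Hilb}$ and applies \cite[Theorem~6.9]{DR89} to obtain a unitary \emph{tensor natural transformation} $u(\gamma)\in(I,I\circ Z_\gamma)$. Its component at $\varrho$ is a concrete unitary $u_\varrho(\gamma)\in UH_\varrho$ with $I\circ Z_\gamma = \wa u_\varrho(\gamma)\circ I$ on ${}_\beta\wa\varrho^a$, and now (\ref{eq.NG}) gives $u_\varrho(\gamma)\in NG_\varrho$; passing to $q(u_\varrho(\gamma))$ produces the desired $\chi_\varrho:\pi_1(K)\to QG_\varrho$. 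In short, the missing idea is to realize the holonomy as an \emph{inner} autofunctor via DR-uniqueness of embeddings of the ambient DR-category, rather than to try to compute the full automorphism group of $\wa\pi_{G_\varrho}$.
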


\begin{proof}
We maintain the notation of the previous Lemma and that of Prop.\ref{prop.NET1}.
Let $\varrho \in \obj \wt \efT$. By the previous Lemma there are a $G$-Hilbert space $H_\varrho$ 
and a compact Lie group $G_\varrho$, the image of $G$ under the $G$-action on $H_\varrho$, such that 
$f \in {}_\beta(\varrho^{a,r},\varrho^{a,s})$, $r,s \in \bN$, 
if and only if 
$I(f)$ is a $G_\varrho$-invariant operator from the tensor power $H_\varrho^r$ to $H_\varrho^s$,
that is, we have an isomorphism 
${}_\beta \wa \varrho^a_{\otimes,\vareps} \simeq \wa \pi_{G_\varrho;\otimes,\theta}$.
If $\gamma \in K(a)$ then we define
$Z_\gamma \in {\bf aut}{}_\beta F_{\otimes,\eps}$,
\begin{equation}
\label{def_zgamma}
Z_\gamma
\ := \
{}_\beta r_{a,|b_n|} \circ {}_\beta r_{|b_n|,\partial_1 b_n} 
\circ \ldots \circ 
{}_\beta r_{\partial_0b_1,|b_1|} \circ {}_\beta r_{|b_1|,a}
\ , \
\end{equation}
where $\gamma := (b_n , \ldots , b_1)$. For any $\varsigma^a \in \obj {}_\beta F$ we have
\begin{equation}
\label{eq_CCS.rho}
Z_\gamma(\varrho^a) = \varrho^a
\ \ , \ \
Z_\gamma(f) \in {}_\beta(\varrho^a,\varsigma^a) \subseteq (\varrho_a,\varsigma_a)
\ \ , \ \
\forall f \in {}_\beta(\varrho^a,\varsigma^a)
\ .
\end{equation}
By \cite[Theorem 6.9]{DR89} there is a unitary, tensor natural transformation $u(\gamma) \in (I,I \circ Z_\gamma)$, 
defining unitaries $u_\varrho(\gamma) \in UH_\varrho$, $\forall \varrho^a \in \obj {}_\beta F$, such that 
\begin{equation}
\label{eq_CCS01p}
\{ I \circ Z_\gamma \}(f) = u_\sigma(\gamma) \circ I(f) \circ u_\varrho(\gamma)
\ \ , \ \
\forall f \in {}_\beta(\varrho^a,\varsigma^a)
\ .
\end{equation}
In particular,
\begin{equation}
\label{eq_CCS01}
\{ I \circ Z_\gamma \}(f) = \wa u_\varrho(\gamma) \circ I(f)
\ \ , \ \
f \in {}_\beta(\varrho^{a,r},\varrho^{a,s})
\  ,\
r,s \in \bN
\ ,
\end{equation}
where $\wa u_\varrho (\gamma)$ is the automorphism defined by (\ref{eq.NGa}).
Now, the l.h.s. of (\ref{eq_CCS01}) is a symmetric tensor isomorphism from
${}_\beta \wa \varrho^a$ to $\wa \pi_{G_\varrho}$,
so $\wa u_\varrho(\gamma)$ restricts to a symmetric tensor automorphism of $\wa \pi_{G_\varrho}$
and by (\ref{eq.NG}) we have $u_\varrho(\gamma) \in NG$, $\forall \gamma \in K(a)$.
On the other side, the l.h.s. of (\ref{eq_CCS01})
preserves the composition of paths and does not depend on the choice of $\gamma \in K(a)$ 
in its homotopy class, thus the map 
\begin{equation}
\label{eq_CCS01a0}
\pi_1(K) \to {\bf aut}\wa \pi_{G_\varrho;\otimes,\theta}
\ \ , \ \
[\gamma] \mapsto \wa u_\varrho(\gamma)|_{ \wa \pi_{G_\varrho} }
\ ,
\end{equation}
is well-defined and yields (up to the isomorphism with $\wa \pi_{G_\varrho}$) a holonomy for ${}_\beta \wa \varrho$. 
Applying (\ref{eq.NG}) we obtain the desired holonomy
\begin{equation}
\label{eq_CCS01a}
\chi_\varrho \, : \, \pi_1(K) \to QG_\varrho 
\ \ : \ \
\chi_\varrho(\gamma) := q(u_\varrho(\gamma))
\ , \ 
\forall \gamma \in K(a)
\ .
\end{equation}
Clearly changing the standard fibre yields a group $G'_\varrho$ isomorphic to $G_\varrho$ 
and an equivalent holonomy.
\end{proof}

\begin{rem}{\it
\label{rem_CCS00}
Let $\varrho \in \obj \wt \efT$ denote a section of the DR-presheaf $\efT$.
For any $o \in K$ we consider the Doplicher-Roberts \sC algebra $A_{\varrho,o}$ generated by the arrows of 
${}_\beta \wa \varrho^o$,
see \cite[\S 4]{DR89}. By Lemma \ref{lem_G} we have that the operation of tensoring on the right by $1_{\varrho^o}$
is faithful on arrows of ${}_\beta \wa \varrho^o$, in fact it is faithful on arrows of ${\bf Hilb}$.
So we apply \cite[Theorem 4.3, Theorem 4.4]{DR89} and conclude that there are
\[
\varrho_{o,*} \in {\bf end}A_{\varrho,o} 
\ \ \ , \ \ \
\eps_{o,*} \in UA_{\varrho,o} \cap ( \varrho_{o,*}^2 \, , \, \varrho_{o,*}^2 ) \ ,
\]
with a canonical tensor *-functor
${}_\beta \wa \varrho^o \to \wa \varrho_{o,*}$.
By functoriality of the Doplicher-Roberts algebra, see \cite[Theorem 5.1]{DR89}, any symmetric tensor isomorphism
${}_\beta r_{o'o} : {}_\beta \wa \varrho^o \to {}_\beta \wa \varrho^{o'}$,
$o \leq o'$,
induces an isomorphism of pointed \sC dynamical systems
\[
\jmath_{o'o} \, : \,
( A_{\varrho,o}  \, , \, \varrho_{o,*}  \, , \, \eps_{o,*}  ) \to 
( A_{\varrho,o'} \, , \, \varrho_{o',*} \, , \, \eps_{o',*} )
\ ,
\]
defining the pointed dynamical \sC net bundle 
$\efA_\varrho = (A_\varrho,\jmath)_K$, $( \efA_\varrho \, , \, \varrho_* \, , \, \eps_* )$.
The unitary $\eps_{o,*}$ makes $\varrho_{o,*}$ a \textbf{symmetric} endomorphism whenever $\varrho^o$ is \textbf{special}
in the sense of \cite[\S 3]{DR89}, see \cite[Corollary 4.7]{DR89}. 
Special objects play a crucial role for the duality theorem \cite[Theorem 6.1]{DR89}, 
and generate the underlying symmetric tensor category in a suitable sense, see \cite[Theorem 3.4]{DR89};
if $\varrho^o$ is special, then it is trivially verified that any $\varrho^e$, $e \in K$, is special,
and in this case we say that $\varrho$ is special.
Fixing $a \in K$ and applying \cite[Theorem 4.17]{DR89}, we have isomorphisms
\[
( {}_\beta \wa \varrho^a \, , \, \otimes_a \, , \, \vareps_a  ) \ \simeq \ 
( \wa \varrho_{a,*}      \, , \, \otimes_a \, , \, \eps_{a,*} ) \ \simeq \ 
( \wa \pi_{G_\varrho}    \, , \, \otimes   \, , \, \varphi    )
\ \ \ , \ \ \
( A_{\varrho,a} \, , \, \varrho_{a,*} \, , \, \eps_{a,*} ) \ \simeq \ (\mO_{G_\varrho} \, , \, \rho \, , \, \theta)
\ .
\]
Since the *-morphisms $\jmath_{o'o}$, $o \leq o'$, are induced by ${}_\beta r_{o'o}$, see \cite[Theorem 5.1]{DR89},
we have the isomorphism of symmetric tensor presheaves
${}_\beta \wa \varrho_{\otimes,\vareps} \ \simeq \ \wa \varrho_{*;\otimes,\eps_*}$.
Thus 
$( \efA_\varrho \, , \, \varrho_* \, , \, \eps_* ) \in {\bf bun}(K;\mO_{G_\varrho},\rho,\theta)$
and it has the holonomy $\chi_\varrho$ of (\ref{eq_CCS01a}), by applying Corollary \ref{cor_NET03}.
Applying Theorem \ref{thm_FL08}, we have a characterization of those special sections $\varrho \in \obj \wt \efT$
such that ${}_\beta \wa \varrho$ can be embedded in a presheaf bundle with fibre a \textbf{full} subcategory of ${\bf Hilb}$:
this happens if, and only if, $\chi_\varrho$ has a lift to $NG_\varrho$.
We shall prove this result for generic $\varrho$, in Theorem \ref{cor.lift}.
}
\end{rem}

\

\noindent {\bf Cohomological aspects of holonomy.}
We say that $\varrho \in \obj \wt \efT$ is \emph{liftable} whenever $\chi_\varrho$ has a lift to $NG_\varrho$.
%
%
%
%
In the following result we show that lifts of $\chi_\rho$ can be characterized in terms of a non-abelian 1--cocycle with values in the crossed module $G_\varrho \to NG_\varrho$ (see \cite[\S 4]{Vas09}).
\begin{thm}[Gerbes]
\label{thm.gerbe}
Any section $\varrho$ of a DR-presheaf defines the cochain
\[
{\bf u}_\varrho : \Sigma_1(K) \to NG_\varrho
\ \ : \ \
d {\bf u}_\varrho(c) := 
{\bf u}_\varrho(\partial_0c) {\bf u}_\varrho(\partial_2c) {\bf u}_\varrho(\partial_1c)^* 
\in G_\varrho
\ , \ 
\forall c \in \Sigma_2(K)
\ .
\]
(Eventual) lifts of $\chi_\rho$ are in one-to-one correspondence with pairs $(v,g)$,
$v : \Sigma_0(K) \to NG_\varrho$,
$g : \Sigma_1(K) \to  G_\varrho$,
such that 
\begin{equation}
\label{eq.gerbe}
z(b) \ := \ v(\partial_0b) \, g(b) \, {\bf u}_\varrho(b) \, v(\partial_1b)^* \ \in NG_\varrho
\ \ , \ \qquad
\forall b \in \Sigma_1(K)
\ ,
\end{equation}
is a cocycle, i.e. $dz(c) = 1$, $\forall c \in \Sigma_2(K)$.
\end{thm}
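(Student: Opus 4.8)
The plan is to transport to arbitrary $1$--simplices the construction of the holonomy $\chi_\varrho$ carried out in the proof of \thmref{thm.hol}, and then to read off the gerbe obstruction from the dictionary of \secref{sec_nets} between net bundles, $1$--cocycles and representations of $\pi_1(K)$. I keep the notation $I, H_\varrho, G_\varrho, u_\varrho(\gamma), Z_\gamma$ of the proofs of \lemref{lem_G} and \thmref{thm.hol}. \textbf{Construction of the cochain.} Fix a path frame $\{\gamma_{ao}:o\to a\}_{o\in K}$. For $b\in\Sigma_1(K)$ regard $b$ as a length--$1$ path $\partial_1 b\to\partial_0 b$ and put $\wa b:=\gamma_{a,\partial_0 b}*b*\gamma_{a,\partial_1 b}^{-1}\in K(a)$. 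Feeding $\wa b$ into the construction of \thmref{thm.hol} — the autofunctor $Z_{\wa b}$ of the typical fibre ${}_\beta F$ of ${}_\beta\efT$ and the unique unitary tensor natural transformation $u(\wa b)\in(I,I\circ Z_{\wa b})$ of \cite[Theorem 6.9]{DR89}, evaluated at $\varrho^a$ — produces a unitary ${\bf u}_\varrho(b):=u_\varrho(\wa b)$, which lies in $NG_\varrho$ by the same argument that places $u_\varrho(\gamma)$ in $NG_\varrho$ in \thmref{thm.hol}. This is the cochain ${\bf u}_\varrho:\Sigma_1(K)\to NG_\varrho$.

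\textbf{The identity $d{\bf u}_\varrho(c)\in G_\varrho$.} Two ingredients are needed. First, $u_\varrho$ is multiplicative on composable paths, $u_\varrho(\gamma*\gamma')=u_\varrho(\gamma)\,u_\varrho(\gamma')$: whiskering $u(\gamma)$ on the right by $Z_{\gamma'}$ and composing with $u(\gamma')$ gives a unitary tensor natural transformation $I\Rightarrow I\circ Z_{\gamma*\gamma'}$, hence \emph{the} transformation $u(\gamma*\gamma')$ by the uniqueness in \cite[Theorem 6.9]{DR89}; evaluating at $\varrho^a$ and using $Z_{\gamma'}(\varrho^a)=\varrho^a$ from (\ref{eq_CCS.rho}) gives the claim. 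Second, as noted in the proof of \thmref{thm.hol}, the restriction $\wa u_\varrho(\gamma)|_{\wa\pi_{G_\varrho}}$ depends only on $[\gamma]\in\pi_1(K)$, and by (\ref{eq.NG}) only the unitaries of $G_\varrho$ induce the identity autofunctor of $\wa\pi_{G_\varrho}$; hence $u_\varrho(\gamma)\,u_\varrho(\gamma')^{-1}\in G_\varrho$ whenever $\gamma\sim\gamma'$. Now for $c\in\Sigma_2(K)$ the simplicial relations (\ref{eq.dd}) together with the elementary deformation $\partial_0 c*\partial_2 c\sim\partial_1 c$ (\cite[\S 2.2]{Ruz05}) yield, after cancelling the frame segments at the common endpoints, $\wa{\partial_0 c}*\wa{\partial_2 c}\sim\wa{\partial_1 c}$; therefore
\[
d{\bf u}_\varrho(c)\;=\;u_\varrho(\wa{\partial_0 c})\,u_\varrho(\wa{\partial_2 c})\,u_\varrho(\wa{\partial_1 c})^{*}\;=\;u_\varrho(\wa{\partial_0 c}*\wa{\partial_2 c})\,u_\varrho(\wa{\partial_1 c})^{*}\;\in\;G_\varrho ,
\]
which is the first assertion. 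Applying $q$ shows in addition that $q\circ{\bf u}_\varrho$ is an honest $QG_\varrho$--cocycle whose holonomy is $\chi_\varrho$, i.e. the connection cochain of the principal net bundle $\efP_\varrho$ for the chosen frame.

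\textbf{Lifts versus pairs $(v,g)$.} By \thmref{thm_NET00} and (\ref{eq_FC01})--(\ref{eq_FC03}) the frame makes homomorphisms $\pi_1(K)\to NG_\varrho$ correspond to $NG_\varrho$--valued $1$--cocycles $z$ on $\Sigma_1(K)$, i.e. to cochains with $z(\partial_0 c)z(\partial_2 c)=z(\partial_1 c)$, the homomorphism being the holonomy of $z$; such a $z$ gives a lift of $\chi_\varrho$ precisely when $q\circ z$ is cohomologous to the connection cochain $q\circ{\bf u}_\varrho$ of $\efP_\varrho$. Since $\ker q=G_\varrho$, comparing $z$ with ${\bf u}_\varrho$ splits uniquely into a $0$--cochain $v:\Sigma_0(K)\to NG_\varrho$ (the residual coboundary freedom of the frame identification) and a fibrewise correction $g:\Sigma_1(K)\to G_\varrho$ in the kernel, giving exactly $z(b)=v(\partial_0 b)\,g(b)\,{\bf u}_\varrho(b)\,v(\partial_1 b)^{*}$. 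Conversely, expanding $dz(c)$ with this Ansatz and using $d{\bf u}_\varrho(c)\in G_\varrho$ together with the fact that ${\bf u}_\varrho(\partial_0 c)$ normalises $G_\varrho$, the $v$--terms telescope away and $dz(c)=1$ reduces to a non-abelian $1$--cocycle condition on $g$ twisted by ${\bf u}_\varrho$ — the crossed-module cocycle of \cite[\S 4]{Vas09}; any pair $(v,g)$ solving it defines, through the dictionary, a homomorphism $\pi_1(K)\to NG_\varrho$ lifting $\chi_\varrho$ (the $G_\varrho$--valued $g$ and the $v$'s disappear under $q$ up to an inner automorphism). One then checks these two passages are mutually inverse.

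\textbf{Main obstacle.} The delicate points are: establishing the multiplicativity $u_\varrho(\gamma*\gamma')=u_\varrho(\gamma)\,u_\varrho(\gamma')$ \emph{on the nose} — this is what promotes ${\bf u}_\varrho$ from a $QG_\varrho$--valued to an $NG_\varrho$--valued cochain, and hence makes the gerbe datum meaningful — and the $1$--cocycle bookkeeping of the last step: keeping track of the path frame, of the precise role of $v$ as residual gauge (in particular the distinction between lifting $\chi_\varrho$ on the nose and up to conjugacy), and of the unwinding of $dz(c)=1$, so that the correspondence with the pairs $(v,g)$ is a genuine bijection. The remaining computations are routine.
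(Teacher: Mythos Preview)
Your approach is essentially the same as the paper's: fix a path frame, set ${\bf u}_\varrho(b):=u_\varrho(\gamma_{a,\partial_0b}*b*\gamma_{a,\partial_1b}^{-1})$, and argue that the 2--simplex $c$ produces a null--homotopic loop so that $d{\bf u}_\varrho(c)$ lands in $G_\varrho$; the bijection with pairs $(v,g)$ is then the standard cocycle bookkeeping of Theorem~\ref{thm_NET00}. On this the paper and you agree.

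There is, however, a genuine gap in your ``first ingredient''. You claim that $u_\varrho(\gamma*\gamma')=u_\varrho(\gamma)\,u_\varrho(\gamma')$ \emph{on the nose}, by the uniqueness in \cite[Theorem 6.9]{DR89}. That theorem gives uniqueness of the embedding \emph{functor} up to unitary tensor natural transformation; it does \emph{not} give uniqueness of the natural transformation itself. If $u,u'\in(I,I\circ Z_\gamma)$ are two unitary tensor natural transformations then $u^{*}u'\in(I,I)$, and the unitary tensor natural transformations of $I$ form precisely the dual group $G$; evaluating at $\varrho^a$ one gets $u'_\varrho=u_\varrho\,g_\varrho$ with $g_\varrho\in G_\varrho$. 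So whiskering and composing produces \emph{a} transformation $I\Rightarrow I\circ Z_{\gamma*\gamma'}$, but it need not coincide with the one you chose for $\gamma*\gamma'$; you only obtain
\[
u_\varrho(\gamma*\gamma')\;\equiv\;u_\varrho(\gamma)\,u_\varrho(\gamma')\pmod{G_\varrho}\,.
\]
This is harmless (indeed it is exactly what you need): since $G_\varrho\lhd NG_\varrho$, apply $q$ to $d{\bf u}_\varrho(c)$ to get
\[
q\big(d{\bf u}_\varrho(c)\big)=\chi_\varrho\big([\widehat{\partial_0c}*\widehat{\partial_2c}*\widehat{\partial_1c}^{-1}]\big)=\chi_\varrho(1)=1\,,
\]
hence $d{\bf u}_\varrho(c)\in\ker q=G_\varrho$. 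This is how the paper argues (it writes $d{\bf u}_\varrho(c)=u_\varrho(\wt\gamma_c)$, meaning only that the product is \emph{a} legitimate choice of $u_\varrho(\wt\gamma_c)$, and then uses null--homotopy of $\wt\gamma_c$). Once you replace ``on the nose'' by ``modulo $G_\varrho$'' your argument goes through unchanged, and the rest of your sketch for the correspondence with $(v,g)$ matches the paper's.
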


\begin{proof}
For any $b \in \Sigma_1(K)$ we define $\ovl b$ as the 1-simplex 
$\partial_0 \ovl b = \partial_1b$,
$\partial_1 \ovl b = \partial_0b$,
$|\ovl b| = |b|$.
We note that any $c \in \Sigma_2(K)$ defines the path 
$\gamma_c := ( \partial_0c , \partial_2c , \overline{\partial_1c} )$,
$\gamma_c : \partial_{00}c \to \partial_{00}c$,
and then take a path frame $\gamma_a = \{ \gamma_{ao} : o \to a \}$.
Then we define
\[
{\bf u}_\varrho(b) \ := \ 
u_\varrho( \gamma_{a \partial_0b} * b * \gamma_{a \partial_1b}^{-1})
\ \ , \ \
\forall b \in \Sigma_1(K)
\ ,
\]
where $u_\varrho : K(a) \to NG_\varrho$ is the map defined by (\ref{eq_CCS01p}).
We have
\[
d{\bf u}_\varrho(c) = u_\varrho(\wt \gamma_c)
\ \ , \ \
\wt \gamma_c := \gamma_{a \partial_{00}c} * \gamma_c * \gamma_{a \partial_{00}c}^{-1}
\ ;
\]
since $\gamma_c$ is homotopic to the trivial path 
$\iota_{\partial_{00}c} \in K(\partial_{00}c)$, 
we conclude that $\wt \gamma_c$ is homotopic to the trivial path $\iota_a$. 
Since 
$\wa u_\varrho |_{\wa \pi_{G_\varrho}} : K(a) \to {\bf aut}\wa \pi_{G_\varrho}$ 
factorizes through $\pi_1(K)$, this implies that 
$\wa u_\varrho( \gamma_{a \partial_{00}c} * \gamma_c * \gamma_{a \partial_{00}c}^{-1})$
restricts to the identity of $\wa \pi_{G_\varrho}$, i.e., by (\ref{eq.NG}),
$d{\bf u}_\varrho(c) = u_\varrho(\wt \gamma_c) \in G_\varrho$ 
as desired.
Let now $(v,g)$ be a pair such that the cochain $z$ of (\ref{eq.gerbe}) is a cocycle.
Then we have 
\[
q \circ {\bf u}_\varrho(\gamma) 
\ = \ 
q(v(a)) \cdot q \circ z(\gamma) \cdot q(v(a)) \in QG_\varrho
\ \ , \ \
\forall \gamma : a \to a \ ,
\]
so $\chi_\varrho$ has lift $\wt \chi_\varrho (\gamma) := v(a)^* z(\gamma) v(a)$
as desired.
On the converse, if $\chi_\varrho$ has lift $\wt \chi_\varrho$ then by the equivalence 
of Theorem \ref{thm_NET00} and (\ref{eq_FC03}) there is a cocycle
$z(b) \in NG_\varrho$, $b \in \Sigma_1(K)$,
such that there is $w : \Sigma_0(K) \to QG_\varrho$ with
$q \circ {\bf u}_\varrho(b) = w(\partial_0b) \cdot q \circ z(b) \cdot w(\partial_1b)^*$.
For any $a \in K$ we pick $v(a) \in NG_\varrho$ such that $q(v(a)^*) = w(a)$, so
\[
q \circ {\bf u}_\varrho(b) \ = \ q ( \ v(\partial_0b)^* \cdot z(b) \cdot v(\partial_1b) \ )
\ \ , \ \
\forall b \in \Sigma_1(K)
\ ,
\]
This proves that there is $g : \Sigma_1(K) \to G_\varrho$ such that
(\ref{eq.gerbe}) holds, as desired.
\end{proof}

$NG_\varrho$-cocycles $z,z'$ fulfilling (\ref{eq.gerbe}) are not necessarily
cohomologous, in fact they may define inequivalent $NG_\varrho$-holonomies
as in the case $G_\varrho = \sud$ (see Prop.\ref{prop_sud} and following remarks).
It is easy to verify that a change $a \to a'$ of the standard fibre (or of the path frame $\gamma_a$)
yields a cochain ${\bf u}'_\varrho : \Sigma_1(K) \to NG'_\varrho$ with an isomorphism
$\beta : NG_\varrho \to NG'_\varrho$ 
such that ${\bf u}'_\varrho = \beta \circ {\bf u}_\varrho$.

\

\noindent {\bf Cheeger-Chern-Simons classes.}
By (\ref{eq_PR}), the map (\ref{eq_CCS01a}) yields a principal net $QG_\varrho$-bundle that we denote by $\efP_\varrho$.

Let now $M$ be a manifold and $\Delta$ denote a good base of $M$; we consider a DR-presheaf $\efT$ over $\Delta$ and $\varrho \in \obj \wt \efT$.
In this setting, $\efP_\varrho$ defines the flat principal $QG_\varrho$-bundle 
$\mP_\varrho \to M$,
see (\ref{eq02_thm_NET01}), and characteristic classes arise, in the way that we explain in the following lines.

Let $\phi \in \bR[x_1 , \ldots , x_d]$ be a $QG_\varrho$-invariant polynomial of degree $k \in \bN$ 
(here $d$ is the rank of the Lie algebra of $QG_\varrho$). 
Then $\phi$ defines a characteristic class assigning to \emph{any} principal $QG_\varrho$-bundle a closed $2k$-form,
\[
c_\phi : {\bf Pr}(M,QG_\varrho) \to Z_{deRham}^{2k}(M)
\]
(see \cite[Vol.2,\S XII.1]{KN}).
We say that $c_\phi$ has periods in $\bK = \bZ,\bQ$ whenever for any $\mP \in {\bf Pr}(M,QG_\varrho)$ it turns out
\[
\int_\zeta c_\phi(\mP) \in \bK \ \ , \ \ \forall \zeta \in Z_{2k}(M) \ ,
\]
where $Z_{2k}(M)$ is the set of singular $2k$-cycles.
We denote the set of $QG_\varrho$-invariant polynomials of degree $k$ 
defining a class with periods in $\bK$ by $I^k(QG_\varrho;\bK)$.
Now, Cheeger and Simons proved that for any $\phi \in I^k(QG_\varrho;\bK)$ there is a singular $(2k-1)$-cochain 
$c^\uparrow_\phi(\mP)$
with coefficients in ${\mathbb{R/K}}$, such that 
\[
\left \langle c^\uparrow_\phi(\mP) , \partial \ell \right \rangle  
\ = \ 
\int_\ell c_\phi(\mP) \ {\mathrm{mod}} \bK 
\ \ , \ \
\forall \ell \in C_{2k}(M)
\ ,
\]
where $C_{2k}(M)$ is the set of singular $2k$-chains and
$\partial : C_{2k}(M) \to Z_{2k-1}(M)$
is the boundary.
When $\mP$ is flat $c_\phi(\mP) = 0$ (\cite[Vol.2, \S XII.1]{KN}),
and the previous equality says that $c^\uparrow_\phi(\mP)$ vanishes on $\partial C_{2k}(M)$. 
So $c^\uparrow_\phi(\mP)$ is a cocycle, whose cohomology class
\[
[c^\uparrow_\phi(\mP)] \in H^{2k-1}(M,{\mathbb{R/K}})
\]
is called the \emph{Cheeger-Chern-Simons} secondary class defined by $\phi$ (see \cite[Theorem 1.1]{CS85}).
%
%
%
%
%
Returning to $\varrho \in \obj \wt \efT$, for any $\phi \in I^k(QG_\varrho;\bK)$ we define
\begin{equation}
\label{eq.ccs}
c_\phi(\varrho) \ := \ [c^\uparrow_\phi(\mP_\varrho)] \ \in H^{2k-1}(M,{\mathbb{R/K}}) \ .
\end{equation}
By construction $c_\phi$ vanishes when $\chi_\varrho$ is trivial, so we can measure the holonomy of $\varrho$ in geometric terms.

\

\noindent {\bf Duality theory.} We now prove a Tannaka duality for presheaves
of symmetric tensor \sC categories. Before to proceed we give the following notion:
an \emph{embedding} is a morphism of symmetric tensor presheaves with simple units
\[
I : \efT_{\otimes,\vareps} \to \efC_{\otimes,\vartheta} \ ,
\]
where $\efC = (C,R,\otimes,\vartheta)_K$ is a Hilbert presheaf and $I_o$ is an embedding for any $o \in K$.
Composing the induced functor 
$\wt I : \wt \efT_{\otimes,\vareps} \to \wt \efC_{\otimes,\vartheta}$
with (\ref{lem_LTGA2}) we have that any $\varrho \in \obj \wt \efT$ defines the Hilbert net bundle 
$\efH_\varrho = (H_\varrho,R^\varrho)_K$,
where
$H_{\varrho,a} := I_a(\varrho_a)$, $a \in K$,
and 
$R^\varrho_{a'a} : H_{\varrho,a} \to H_{\varrho,a'}$, $a \leq a'$,
is defined as in (\ref{eq_LTGA2}).
Moreover, any $t \in (\varrho,\varsigma)$ defines the morphism $\wt I(t) \in (\efH_\varrho,\efH_\varsigma)$.
Let now $\varrho,\varsigma \in \obj \wt \efT$ and 
$f' \in {}_\beta(\varrho^{a'},\varsigma^{a'}) \subseteq ( \varrho_{a'} , \varsigma_{a'} )$.
Defining $f := {}_\beta r_{aa'}(f')$ we find
$I_a(f) = R_{aa'} \circ I_{a'}(f')$,
so applying (\ref{eq.LTGA2}) we have
\begin{equation}
\label{eq_CCSA}
R^\varsigma_{a'a} \circ I_a(f) \circ R^\varrho_{aa'} \ = \ I_{a'}(f') \ .
\end{equation}
We now consider the groups of unitary tensor natural transformations $G_a$, $a \in K$, 
having elements families
$u^a \ :=  \{ u_\varrho^a \in UH_{\varrho,a} \}_{\varrho \in \obj \wt \efT}$
such that
\begin{equation}
\label{eq_CCSB}
I_a(f) \circ u^a_\varrho = u^a_\varsigma \circ I_a(f)
\ \ , \ \
u_\varrho^a \otimes_a u_\varsigma^a = u_{\varrho \varsigma}^a
\ \ , \ \
\forall f \in {}_\beta(\varrho^a,\varsigma^a)
\ .
\end{equation}
By construction any $G_a$, $a \in K$, is the Tannaka dual of $I_a({}_\beta T_a)$
(\cite[Remarks after Lemma 6.2]{DR89}), so it is a compact group.

\begin{thm}[Tannaka duality]
\label{thm.dual}
Let $\efT = (T,r,\otimes,\vareps)^K$ be a symmetric tensor presheaf with simple units and 
$I : \efT_{\otimes,\vareps} \to \efC_{\otimes,\vartheta}$
an embedding. 
Then there is a group net bundle $\efG = (G,i)_K$ and a full symmetric tensor embedding 
$\wt \efT \to {\bf bun}_\efG(K,{\bf Hilb})$.
\end{thm}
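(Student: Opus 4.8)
The plan is to construct the group net bundle $\efG$ fibrewise from the Tannaka duals $G_a$ of the categories $I_a({}_\beta T_a)$, show that the structure functors $r_{aa'}$ induce isomorphisms between these groups, and then verify that the assignment $\varrho \mapsto \efH_\varrho$ equipped with the natural $G_a$-actions lands in ${\bf bun}_\efG(K,{\bf Hilb})$ as a full symmetric tensor \sC monofunctor. First I would set up the fibres: for each $a \in K$, the compact group $G_a$ of unitary tensor natural transformations defined in (\ref{eq_CCSB}) is the Tannaka dual of $I_a({}_\beta T_a)$, and each Hilbert space $H_{\varrho,a}$ carries the evaluation action $G_a \to UH_{\varrho,a}$, $u^a \mapsto u^a_\varrho$. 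I would use Prop.\ref{prop.NET1} to replace $\efT$ by the presheaf bundle ${}_\beta \efT$, so that all the $r_{aa'}$ (more precisely ${}_\beta r_{aa'}$) are \sC isofunctors on the arrows and bijective on objects; this is what makes the next step possible.

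The core step is to build the transition isomorphisms $i_{a'a} : G_a \to G_{a'}$ for $a \le a'$. Given $u^a = \{u^a_\varrho\}_{\varrho} \in G_a$, I would define $i_{a'a}(u^a) = u^{a'}$ by the formula
\[
u^{a'}_\varrho := R^\varrho_{a'a} \circ u^a_\varrho \circ R^\varrho_{aa'}
\ \ , \ \
\varrho \in \obj \wt \efT
\ .
\]
That this again satisfies (\ref{eq_CCSB}) follows by combining the intertwining property of $u^a$ with the identity (\ref{eq_CCSA}): for $f' \in {}_\beta(\varrho^{a'},\varsigma^{a'})$ one writes $f := {}_\beta r_{aa'}(f')$, uses $I_a(f) \circ u^a_\varrho = u^a_\varsigma \circ I_a(f)$, and conjugates through $R^\varrho_{a'a}, R^\varsigma_{a'a}$; compatibility with the tensor structure comes from (\ref{eq_LTGAa}). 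The presheaf relations for $\{R^\varrho_{aa'}\}$ (already noted in the excerpt) give $i_{a''a'} \circ i_{a'a} = i_{a''a}$, and each $i_{a'a}$ is an isomorphism with inverse built from $R^\varrho_{aa'}$. This produces the group net bundle $\efG = (G,i)_K$. The functoriality on objects of ${}_\beta \efT$ — i.e. that $u^a_\varrho$ makes sense for a \emph{fixed} section $\varrho$ across all $a$ — is exactly why I pass to ${}_\beta \efT$ first, since there $\obj {}_\beta T_a$ is canonically identified with $\obj \wt \efT$.

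Next I would define the functor $\Phi : \wt \efT \to {\bf bun}_\efG(K,{\bf Hilb})$ on objects by $\Phi(\varrho) := \efH_\varrho = (H_\varrho, R^\varrho)_K$, together with the gauge $\efG$-action $\alpha_{\varrho,a} : G_a \to UH_{\varrho,a}$, $u^a \mapsto u^a_\varrho$; the relation (\ref{eq_GA00}) for this action is precisely the definition of $i_{a'a}$ above, so $(\efH_\varrho, \alpha_\varrho)$ is a $\efG$-Hilbert net bundle. On arrows, $\Phi(t) := \wt I(t) = \{I_a(t_a)\}$, which is a morphism $\efH_\varrho \to \efH_\varsigma$ of net bundles by (the symmetric-tensor-preserving part of) (\ref{lem_LTGA2}), and is a $\efG$-morphism because $I_a(t_a)$ intertwines the evaluation actions by the first equation of (\ref{eq_CCSB}). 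Compatibility with tensor product and symmetry is inherited from $\wt I$ and (\ref{eq_LTGAa}). The remaining point is \textbf{fullness}: I must show that every $\efG$-morphism $T : \efH_\varrho \to \efH_\varsigma$ is of the form $I_a(t_a)$ for some $t \in (\varrho,\varsigma)$. Here one fixes $a$, notes that $T_a : H_{\varrho,a} \to H_{\varrho,a'}$... wait, $T_a : H_{\varrho,a} \to H_{\varsigma,a}$ is a $G_a$-equivariant operator, hence by the Tannaka characterization of $I_a({}_\beta T_a)$ as the full subcategory of $G_a$-modules it equals $I_a(f)$ for a unique $f \in {}_\beta(\varrho^a,\varsigma^a) = (\varrho,\varsigma)_a$; then one checks, using (\ref{eq_CCSA}) and the net-bundle morphism property of $T$, that the family $\{{}_\beta r_{oa}(f)\}$ (transported by the $R$'s) is a section $t \in (\varrho,\varsigma)$ with $\wt I(t) = T$.

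The main obstacle I expect is this fullness argument: the naive guess $t_a := I_a^{-1}(T_a)$ only uses one fibre, and one must verify that the resulting section of ${}_\beta \efT$ reproduces $T$ at \emph{every} $o \in K$, which requires the compatibility (\ref{eq_CCSA}) between the embedding $I$, the restriction functors ${}_\beta r$, and the Hilbert-bundle transitions $R^\varrho$ — together with the fact that $T$, being a net-bundle morphism, intertwines the $R^\varrho$'s and $R^\varsigma$'s. A secondary subtlety is checking that $G_a$ really is compact and that $i_{a'a}$ is continuous (a homeomorphism), so that $\efG$ is genuinely an object of ${\bf bun}(K,{\bf TopGr})$; compactness is quoted from \cite[Remarks after Lemma 6.2]{DR89}, and continuity of $i_{a'a}$ follows since conjugation by the fixed unitaries $R^\varrho_{a'a}$ is continuous in the pointwise topology on each $UH_{\varrho,a}$.
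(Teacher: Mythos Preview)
Your proposal is correct and follows essentially the same route as the paper's proof: construct $G_a$ as Tannaka duals, define $i_{a'a}$ by conjugation with the unitaries $R^\varrho_{a'a}$ and verify well-definedness via (\ref{eq_CCSA}), (\ref{eq_CCSB}), (\ref{eq_LTGAa}), then establish fullness using Tannaka duality fibrewise. The only cosmetic difference is that for fullness the paper defines $t_o := I_o^{-1}(T_o)$ independently at \emph{each} $o$ (using $G_o$-equivariance there) and then checks the section condition $t_{o'} = {}_\beta r_{o'o}(t_o)$ directly from (\ref{eq_CCSA}), rather than fixing one $a$ and transporting; this sidesteps any discussion of path-dependence but is equivalent to your sketch.
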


\begin{proof}
For any $a \leq a'$ we define the family of group isomorphisms
\[
\ad R^\varrho_{aa'} :  UH_{\varrho,a'} \to UH_{\varrho,a}
\ , \
u \mapsto R^\varrho_{aa'} \circ u \circ R^\varrho_{a'a}
\ \ , \ \
\varrho \in \obj \wt \efT \ .
\]
Let $\varrho,\varsigma \in \obj \wt \efT$. Then, with the notation of Prop.\ref{prop.NET1}, 
for any $a \leq a' \in K$ and 
$f \in {}_\beta(\varrho^a,\varsigma^a)$
there is a unique 
$f' \in {}_\beta(\varrho^{a'},\varsigma^{a'})$
such that 
$f = r_{aa'}(f')$, i.e. $f' = {}_\beta r_{a'a}(f)$, where ${}_\beta r_{a'a} := {}_\beta r_{aa'}^{-1}$. 
For any $u^a \in G_a$, we find                                      
\[
\begin{array}{ll}
I_{a'}(f') \circ \{ \ad R^\varrho_{a'a} (u^a_\varrho) \} & \ \ = \ \
I_{a'}({}_\beta r_{a'a}(f)) \circ \{ \ad R^\varrho_{a'a} (u^a_\varrho) \}  \\ & \stackrel{ (\ref{eq_CCSA}) }{=}
\{ R^\varsigma_{a'a} \circ I_a(f) \circ R^\varrho_{aa'} \}  \circ  \ad R^\varrho_{a'a} (u^a_\varrho) = \\ & \ \ = \ \ 
R^\varsigma_{a'a} \circ I_a(f) \circ u^a_\varrho \circ R^\varrho_{aa'}   =   \\ & \stackrel{ (\ref{eq_CCSB}) }{=} 
R^\varsigma_{a'a} \circ u^a_\varsigma \circ I_a(f) \circ R^\varrho_{aa'}  =  \\ & \ \ = \ \ 
\{ \ad R^\varsigma_{a'a}(u^a_\varsigma) \} \circ \{ R^\varsigma_{a'a} \circ I_a(f) \circ R^\varrho_{aa'} \} =  \\ & \ \ = \ \
\{ \ad R^\varsigma_{a'a}(u^a_\varsigma) \} \circ I_{a'}(f') 
\ ,
\end{array}
\]
and, applying (\ref{eq_LTGAa}),
\[
\{ \ad R^\varrho_{a'a} (u^a_\varrho) \} \otimes_{a'} \{ \ad R^\varsigma_{a'a} (u^a_\varsigma) \}
\ = \
\ad R^{\varrho \varsigma}_{a'a} (u^a_{\varrho \varsigma})
\ .
\]
The previous computations say that
$i_{a'a}u^a := \{ \ad R^\varrho_{a'a}(u^a_\varrho) \}_\varrho$
belongs to $G_{a'}$ so, since $i := \{ i_{a'a} \}$ fulfils the net relations and is one-to-one, 
we have the group net bundle $\efG = (G,i)_K$.
For each $\varrho \in \obj \wt \efT$ we define the family $\alpha_\varrho$ of unitary representations 
\[
\alpha_{\varrho,o} : G_o \to UH_{\varrho,o}
\ \ , \ \
u^o \mapsto u^\varrho_o
\ ; 
\]
since
\[
\{ \alpha_{\varrho,o'} \circ i_{o'o} (u^o) \} \circ R^\varrho_{o'o} =
\ad R^\varrho_{o'o}(u^o_\varrho) \circ R^\varrho_{o'o} =
R^\varrho_{o'o} \circ u^o_\varrho =
R^\varrho_{o'o} \circ \alpha_{\varrho,o}(u^o)
\ ,
\]
we have that (\ref{eq_GA00}) is fulfilled, so $\alpha$ is a gauge action of $\efG$ on $\efH_\varrho$. 
This means that $\efH_\varrho \in \obj {\bf bun}_\efG(K,{\bf Hilb})$.
At the level of arrows, if $t \in (\varrho,\varsigma)$ then $I(t) \in (\efH_\varrho,\efH_\varsigma)$ and $t_a \in {}_\beta(\varrho^a,\varsigma^a)$ for all $a \in K$; by definition of $G_a$ we have
\begin{equation}
\label{eq.Ta}
I_a ({}_\beta(\varrho^a,\varsigma^a)) = (H_{\varrho,a},H_{\varsigma,a})_{G_a}
\ \ , \ \
\forall a \in K
\ ,
\end{equation}
so
$I_a(t_a) \in (H_{\varrho,a},H_{\varsigma_a})_{G_a}$,
and $I(t) \in (\efH_\varrho,\efH_\varsigma)_\efG$.
Finally, if $T \in (\efH_\varrho,\efH_\varsigma)_\efG$ and $o \in K$
then by (\ref{eq.Ta}) we have $T_o = I_o(t_o)$ for a unique $t_o \in {}_\beta(\varrho^o,\varsigma^o)$; 
applying (\ref{eq_CCSA}) and using the fact that, by definition,
$T_{o'} = R^\varsigma_{o'o} \circ T_o \circ R^\varrho_{o o'}$, $\forall o \leq o'$, 
we find
\[
I_{o'}(t_{o'}) =
T_{o'} =
R^\varsigma_{o'o} \circ T_o \circ R^\varrho_{o o'} =
R^\varsigma_{o'o} \circ I_o(t_o) \circ R^\varrho_{o o'} =
I_{o'} \circ r_{o'o}(t_o)
\ .
\]
By injectivity of $I_{o'}$ we conclude that $t_{o'} = r_{o'o}(t_o)$ for all $o \leq o'$, 
thus $t := \{ t_o \}$ belongs to $(\varrho,\varsigma)$. We conclude that
$(\efH_\varrho,\efH_\varsigma)_\efG = \wt I(\varrho,\varsigma)$ 
and this proves the theorem.
\end{proof}

The following result gives a necessary condition to the existence of embeddings.
\begin{thm}
\label{cor.lift}
If a DR-presheaf $\efT$ has an embedding then every $\varrho \in \obj \wt \efT$ is liftable.
\end{thm}

\begin{proof}
Let $\varrho \in \obj \wt \efT$. 
We define $Z_\gamma$, $\gamma \in K(a)$, as in (\ref{def_zgamma}) and consider the holonomy of $\efH_\varrho$,
\begin{equation}
\label{eq_zrho}
z_\varrho (\gamma) :=
R^\varrho_{a,|b_n|} \circ R^\varrho_{|b_n|,\bo_n}
\cdots
R^\varrho_{\bo_1,|b_1|} \circ R^\varrho_{|b_1|,a}
\ \ , \ \
\gamma := b_n * \cdots * b_1 \in K(a)
\ .
\end{equation}
Applying repeatedly (\ref{eq_CCSA}) we find 
\[
\{ I_a \circ Z_\gamma \}(f) = \wa z_\varrho (\gamma) \circ I_a(f)
\ \ , \ \
\forall f \in {}_\beta(\varrho^{a,r},\varsigma^{a,s})
\  ,\
r,s \in \bN
\ ,
\]
where $\wa z_\varrho(\gamma)$ is the automorphism of the category of tensor powers of $H_{\varrho,a}$ defined as in (\ref{eq.NGa}). 
Comparing with (\ref{eq_CCS01}) and recalling (\ref{eq_CCS01a}) we conclude that 
\[
\wa z_\varrho (\gamma) \, | \, { \wa \pi_{G_\varrho} } \ = \ \wa \chi_\varrho(\gamma)
\ \ , \ \ 
\forall \gamma : a \to a
\ ,
\]
and by (\ref{eq.NG}) this happens if, and only if,
$z_\varrho$ takes values in $NG_\varrho$ and $\chi_\varrho = q \circ z_\varrho$.
\end{proof}

A comment to the previous theorem.
As we know, any $\varrho \in \obj \wt \efT$ defines the symmetric tensor net subbundle 
${}_\beta \wa \varrho$ of ${}_\beta \efT_{\otimes,\vareps}$.
We have seen in the proof of Theorem \ref{thm.hol} that any fibre ${}_\beta \wa \varrho_a$, $a \in K$, 
is isomorphic to the category $\wa \pi_{G_\varrho}$ of tensor powers of the
defining representation $\pi_{G_\varrho} : G_\varrho \to \ud$,
so by Theorem \ref{thm_OG} and Theorem \ref{thm_FL08} there is an embedding 
$I_\varrho : {}_\beta \wa \varrho_{\otimes,\vareps} \to \efC_{\otimes,\vartheta}$
if, and only if, $\varrho$ is liftable. 
Thus, the examples of \S \ref{sec_end} show that in general we cannot expect that 
existence and uniqueness of $I$ hold.

\section{The gerbe perspective.}
\label{sec.concl}

In this section we explain what we mean by a gerbe over a poset,
and show how this enters in the game in the scenario of section categories.
Given the group $G$ and the poset $K$, a \emph{$G$-gerbe} over $K$ is a pair
$\check \efG = (i,\delta)_K$,
such that
\begin{equation}
\label{eq.CON1}
i : \Sigma_1(K) \to {\bf aut}G
\ \ , \ \
\delta : \Sigma_2(K) \to G
\ \ : \ \
\ad \delta_c \circ i_{\partial_1c} = i_{\partial_0c} \circ i_{\partial_2c}
\ , \
\forall c \in \Sigma_2(K)
\ .
\end{equation}
The map $\delta$ encodes the obstruction to constructing the net bundle
$\efG := ( \{ Y_a \equiv G \} , i )_K$, that is well-defined when $\delta_c \equiv 1$.
In precise terms, $(i,\delta)_K$ is a non-abelian 1-cocycle with 
values in the crossed module $G \to {\bf aut}G$ (see \cite[\S 4]{Vas09} and related references).

In this paper we already have encountered group gerbes. If $\varrho$ is a section of 
the DR-presheaf $\efT$ then, applying the adjoint action to the cochain 
${\bf u}_\varrho : \Sigma_1(K) \to NG_\varrho \subseteq \ud$
of Theorem \ref{thm.gerbe}, we find that (\ref{eq.CON1}) is fulfilled by
\begin{equation}
\label{eq.CON.ex}
i_b := \ad {\bf u}_\varrho(b) |_{G_\varrho}
\ \ , \ \
\delta_c := d{\bf u}_\varrho(c)
\ \ , \ \
\forall b \in \Sigma_1(K)
\ , \
c \in \Sigma_2(K)
\ .
\end{equation}
Let us now define the sets
\[
N_1(K) := \{ b = \{ b_0 \leq |b| \in K \} \}
\ \ , \ \
N_2(K) := \{ c = \{ c_0 \leq c_1 \leq |c| \in K \} \} 
\ ;
\]
as remarked in \cite[\S 2.2]{RRV07}, there are inclusions 
$N_1(K) \subseteq \Sigma_1(K)$, $N_2(K) \subseteq \Sigma_2(K)$.
A \emph{$\check \efG$-gerbe of \sC algebras} is a triple 
$\check \efF = (F,\jmath,\alpha)_K$, 
where $F = \{ F_o \}$ is a family of \sC algebras with actions
    $\alpha_o : G \to {\bf aut}F_o$, $o \in K$,
and $\jmath = \{ \jmath_{o'o} : F_o \to F_{o'} \}_{o \leq o'}$ 
    is a family of *-monomorphisms such that, for any 
    $b \in N_1(K)$, $c \in N_2(K)$, $g \in G$,
    \begin{equation}
    \label{eq.CON2}
    \jmath_{|c| c_1} \circ \jmath_{c_1 c_0} = \alpha_{|c|}(\delta_c) \circ \jmath_{|c| c_0}
    \ \ , \ \
    \jmath_{|b| b_0} \circ \alpha_{b_0}(g)  = \alpha_{|b|}(i_b(g)) \circ \jmath_{|b|b_0}
    \ .
    \end{equation}
    The first of (\ref{eq.CON2}) generalizes in a natural way the precosheaf relations (\ref{eq.00}),
    whilst the second one is analogous to (\ref{eq_GA00}). 
    The previous equalities imply that the fixed point family 
    $\efA = (A,\jmath)_K$,
    $A_a := F_a^G$, $a \in K$,
    is a \emph{net} of \sC algebras. 
    When $\delta \equiv 1$, we have that $\check \efG$ collapses to a group net bundle, written $\efG$, 
    making $\check \efF$ a $\efG$-\emph{net} of \sC algebras.

We now illustrate the construction motivating the above notions. 
Take the $G_\varrho$-gerbe $\check \efG_\varrho = (i,\delta)_K$ defined by (\ref{eq.CON.ex}).
Then consider the constant family of \sC algebras $F := \{ F_o := \mO_d  \}$ and set
\begin{equation}
\label{eq.CON3}
\jmath_{|b|b_0} := \wa {\bf u}_\varrho(b) : F_{b_0} \to F_{|b|}
\ \ , \ \
\alpha_o(g) := \wa g \in {\bf aut}F_o
\ \ , \ \
\forall b \in N_1(K) \ , \ o \in K \ , \ g \in G_\varrho
\ ,
\end{equation}
where $\wa {\bf u}_\varrho(b)$ is defined by applying (\ref{eq_cuntz_dr}) 
to ${\bf u}_\varrho(b) \in NG_\varrho \subseteq \ud$ and 
$\wa g \in {\bf aut}\mO_d$ is given by (\ref{eq_FL04}).
As we shall see in \cite{VasX}, it turns out that 
$\check \efF_\varrho = (F,\jmath,\alpha)_K$ 
is a $\check \efG_\varrho$-gerbe of \sC algebras, whose fixed-point net is isomorphic 
to the net $\efA_\varrho$ of Remark \ref{rem_CCS00}.

\

\noindent \emph{Acknowledgements.} The author would like to thank J.E. Roberts for his interest
for (an old version of) this preprint, and G. Ruzzi for fruitful discussions.

\appendix

\section{Appendix: Locally constant bundles.}
\label{sec_lc}

In this section we describe the equivalence between the category of locally constant bundles on the manifold $M$ and that of net bundles over a good base $\Delta$ of $M$.

\

\noindent {\bf Bundles.} 
Given the topological category ${\bf C}$, a ${\bf C}$-\emph{bundle} is given by a continuous surjective map $p : \mB \to M$ such that each \emph{fibre} $B_x := p^{-1}(x)$, $x \in M$, is an object of ${\bf C}$. A \emph{morphism} $T \in (\mB,\mB')$ is given by a continuous map $f : \mB \to \mB'$ such that: (1) $p' \circ f = p$ (this implies that $f$ restricts to maps $f_x : B_x \to B'_x$, $x \in M$); (2) Each $f_x$, $x \in M$, is an arrow in $(B_x,B'_x)$.

In this way we have the category 
${\bf bun}(M,{\bf C})$,
with objects bundles on $M$ with fibres in $\obj {\bf C}$ and arrows the above defined morphisms.

The \emph{constant bundle} is given by the projection $p : M \times X \to M$, $X \in \obj {\bf C}$. Let $U \subset M$; we define the \emph{restriction} $p_U : \mB_U := p^{-1}(U) \to U$, $\mB_U \in {\bf bun}(U,{\bf C})$, and say that $\mB$ is \emph{locally trivial} whenever for each $x \in M$ there is a neighborhood $U \ni x$ such that there is an isomorphism $\alpha \in ( \mB_U , U \times X )$. We denote the set of isomorphism classes of locally trivial bundles with fibre $X$ ($X$-\emph{bundles}, to be concise) by
$\unl{\bf bun}(M,X)$, $X \in \obj {\bf C}$.

\begin{ex}{\it 
When ${\bf C} = {\bf Hilb}$ the above construction yields Hermitian vector bundles. 
When ${\bf C} = \Calg$ we obtain locally trivial bundles of \sC algebras, 
whose algebras of sections are continuous fields in the sense of \cite[Chap. X]{Dix}. 
When ${\bf C} = {\bf TopGr}$ we have group bundles (see \cite[\S 4]{Hus}).
} \end{ex}

\noindent {\bf Transition maps and locally constant bundles.} 
Now, ${\bf aut}X$ is a topological group; if we consider a cover $\{ Y_i \}$ with local charts $p_i : p_i^{-1}(Y_i) \to Y_i \times X$, then defining 
$( x , u_{ij,x}(v) ) := p_i \circ p_j^{-1}(x,v)$, $x \in Y_{ij} := Y_i \cap Y_j$, $v \in X$, 
yields \emph{transition maps} $u_{ij} : Y_{ij} \to {\bf aut}X$ satisfying the \emph{cocycle relations}
\[
u_{ij}(x) \cdot u_{jk}(x) \ = \ u_{ik}(x)
\ \ , \ \
\forall x \in Y_i \cap Y_j \cap Y_k
\ .
\]
A \emph{locally constant bundle} is a pair $(\mB,Y)$, where $\mB$ is an $X$-bundle and $Y := \{ Y_i \}$ 
is an open cover of $M$ with local charts $p_i : p_i^{-1}(Y_i) \to Y_i \times X$ such that 
the associated transition maps $u_{ij} : Y_{ij} \to {\bf aut}X$ are \emph{locally constant}. 
Given the locally constant bundle $(\mB',Y')$, a morphism $f \in (\mB,\mB')$ is said to be 
\emph{locally constant} whenever any map
\[
f_{il} : Y_i \cap Y'_l \to (X,X')
\ \ , \ \
f_{il}(x) := p'_l \circ f \circ p_i^{-1}(x)
\ ,
\]
is locally constant. This yields a \emph{non-full} subcategory of ${\bf bun}(M,{\bf C})$, denoted by
${\bf lc}(M,{\bf C})$.
Sometimes we will not mention the locally constant structure and will write $\mB$ instead of $(\mB,Y)$.

We stress that locally constant bundles $(\mB,Y),(\mB',Y')$ may be isomorphic in ${\bf bun}(M,{\bf C})$ but not in ${\bf lc}(M,{\bf C})$. 
Given $X \in \obj {\bf C}$, we denote the set of isomorphism classes of locally constant $X$-bundles by 
$\unl{\bf lc}(M,X)$.

\

\noindent {\bf The equivalence between locally constant bundles on $M$ and net bundles over $\Delta$} 
has been proved in \cite[Prop.33]{RRV07} using a cohomological language. The same result can proved
using the notion of holonomy: if $\mB \to M$ is a locally constant bundle
with fibre $X$, then the monodromy $\chi_\gamma \in {\bf aut}X$ of a loop $\gamma : [0,1] \to M$ is homotopy invariant 
(this does not happen for the holonomy of generic bundles), so it induces a representation
$\chi : \pi_1(M) \to {\bf aut}X$.
This yields an equivalence
\begin{equation}
\label{eq.p1.lc}
{\bf lc}(M,{\bf C}) \to {\bf hom}(\pi_1(M),{\bf C}) \ ,
\end{equation}
whose inverse (up to isomorphism) is given by the operation of assigning to 
$\chi \in {\hom}( \pi_1(M) , {\bf aut}X )$, $X \in \obj {\bf C}$, 
the induced bundle $\mB := \wt M \times_\chi X$, where $\wt M$ is the universal cover of $M$
regarded as a right $\pi_1(M)$-space (see \cite[\S I.2]{Kob}).
%

\begin{ex}{\it 
\label{ex_fpb}
The case ${\bf C} = {\bf TopGr}$ yields the category of locally constant group bundles. 
In particular, as in (\ref{eq_PR}), for any compact Lie group $G$ we may consider the right translation action $G \to {\bf homeo}G$.
Locally constant bundles $\mG \in {\bf lc}(M,G)$ that admit a holonomy representation $\pi_1(M) \to G \simeq R(G)$ are called \textbf{locally constant principal bundles}. 
We denote the category of locally constant principal bundles by ${\bf lcPr}(M,G)$.
When $M$ is a manifold and $G$ is a Lie group, a principal $G$-bundle is locally constant if, and only if, it is flat, 
i.e., it admits a connection with vanishing curvature (see \cite[Prop.I.2.6]{Kob}).
In the case ${\bf C} = {\bf Hilb}$ we have locally constant Hilbert bundles, 
which are flat Hermitian bundles when the fibre is finite-dimensional (\cite[Chap.I]{Kob}). 
In this case it is easily proved that (\ref{eq.p1.lc}) preserves direct sums and tensor products.
} 
\end{ex}

Combining (\ref{eq.p1.lc}) and Theorem \ref{thm_NET00} we obtain:
\begin{thm}
\label{thm_NET01}
Let $M$ be a space, $\Delta$ denote a good base of $M$ and $G$ a topological group. 
Then for any category ${\bf C}$ there are equivalences
\begin{equation}
\label{eq01_thm_NET01}
{\bf bun}(\Delta,{\bf C}) \to {\bf lc}(M,{\bf C}) 
\ \ , \ \
\efB \mapsto (\mB,Y)
\ ,
\end{equation}
\begin{equation}
\label{eq02_thm_NET01}
{\bf Pr}(\Delta,G) \to {\bf lcPr}(M,G)
\ \ , \ \
\efP \mapsto (\mP,Y)
\ .
\end{equation}
\end{thm}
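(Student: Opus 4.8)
The plan is to realize each of the two asserted equivalences as a composite of three equivalences that are already available. First I would fix a base point $a \in \Delta$ and a point $x \in a \subseteq M$, so that (\ref{eq_BK01}) yields an isomorphism $\pi_1(\Delta) \simeq \pi_1(M)$ between the homotopy group of $\Delta$ relative to $a$ and the fundamental group of $M$ based at $x$. Precomposition along this isomorphism transports representations of $\pi_1(M)$ to representations of $\pi_1(\Delta)$ and back, giving an isomorphism of categories ${\bf hom}(\pi_1(\Delta),{\bf C}) \simeq {\bf hom}(\pi_1(M),{\bf C})$ for every ${\bf C}$, carrying intertwiners to intertwiners verbatim since the underlying object of ${\bf C}$ does not change.

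Next I would splice this isomorphism between the holonomy equivalence ${\bf bun}(\Delta,{\bf C}) \simeq {\bf hom}(\pi_1(\Delta),{\bf C})$ of Theorem~\ref{thm_NET00} (with quasi-inverse $\chi \mapsto \efB_\chi$) and the equivalence (\ref{eq.p1.lc}), whose quasi-inverse sends $\chi \in \hom(\pi_1(M),{\bf aut}F)$ to the flat bundle $\wt M \times_\chi F$ equipped with the locally constant structure carried by a good cover of $M$. The composite
\[
{\bf bun}(\Delta,{\bf C}) \longrightarrow {\bf hom}(\pi_1(\Delta),{\bf C}) \longrightarrow {\bf hom}(\pi_1(M),{\bf C}) \longrightarrow {\bf lc}(M,{\bf C})
\]
is then an equivalence. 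To see that it is the map $\efB \mapsto (\mB,X)$ of (\ref{eq01_thm_NET01}), I would unwind it: a net bundle $\efB$ has holonomy $\chi$ by Theorem~\ref{thm_NET00}, which (\ref{eq_BK01}) reads as a representation of $\pi_1(M)$ and hence produces $\mB$, while a net-bundle morphism $T = \{T_o\}$ goes to the intertwiner $T_a$ (by (\ref{eq_NETGR})) and thence to the induced morphism of bundles. As a concrete cross-check I would also describe $\mB$ directly by gluing the fibres $B_o$, $o \in \Delta$, over a good cover $X = \{X_i\}$ whose members lie in $\Delta$: on a connected component of $X_i \cap X_j$ one picks $o \in \Delta$ with $o \subseteq X_i \cap X_j$ and takes $\jmath_{X_i o} \circ \jmath_{o X_j}$ as transition map; the net relations and the fact that $\Delta$ is a base make this independent of the choice of $o$ along that component, hence locally constant, and compatible with the flat bundle above.

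For (\ref{eq02_thm_NET01}) I would rerun the argument with the bijection (\ref{eq_PR}) in place of Theorem~\ref{thm_NET00} and with the principal-bundle version of (\ref{eq.p1.lc}) recalled in Example~\ref{ex_fpb}; both ${\bf Pr}(\Delta,G)$ and ${\bf lcPr}(M,G)$ single out the objects whose structure group reduces to $R(G) \simeq G$ acting by right translations, and the equivalence of (\ref{eq01_thm_NET01}) restricts to them because holonomy and monodromy of a principal bundle are $R(G)$-valued by definition.

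The routine parts are bookkeeping; the step I expect to be the main obstacle is checking that the composite really agrees with the explicit assignment $\efB \mapsto (\mB,X)$ --- that the cover inherited from $\Delta$ endows $\mB$ with genuinely \emph{locally constant} transition maps and that the resulting monodromy reproduces $\chi$, all independently of the auxiliary choices of $a$, of $x \in a$, and of the path frame used in Theorem~\ref{thm_NET00}. This rests on homotopy invariance of monodromy on locally constant bundles, on (\ref{eq_BK01}), and on \cite[Lemma 4.27]{RR06} for the change of standard fibre.
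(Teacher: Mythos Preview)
Your proposal is correct and follows exactly the paper's approach: the paper's proof is the single sentence ``Combining (\ref{eq.p1.lc}) and Theorem~\ref{thm_NET00} we obtain'', and you have simply unpacked that composition, inserting the isomorphism $\pi_1(\Delta)\simeq\pi_1(M)$ from (\ref{eq_BK01}) that the paper leaves implicit. The additional bookkeeping you flag (well-definedness of the transition maps, independence of auxiliary choices) is not addressed in the paper either, so your write-up is in fact more careful than the original.
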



{\small

}


\begin{thebibliography}{99}








\bibitem{BR08}
R. Brunetti, G. Ruzzi: Quantum charges and spacetime topology: The emergence of new superselection sectors, 
Comm. Math. Phys. 87(2), (2009), 523-563, arXiv:0801.3365.




\bibitem{BT}
R.Bott, L.W.Tu: Differential forms in Algebraic Topology, Springer-Verlag New York, Inc., 1982.



\bibitem{CS85}
J. Cheeger, J. Simons: Differential characters and geometric invariants, from: Geometry and Topology, Lecture Notes in Mathematics 1167, Springer, Berlin (1985) 50-80.

\bibitem{Cun77}
J. Cuntz: Simple {\it C*}-algebras Generated by Isometries, 
Comm. Math. Phys. 57 (1977) 173-185.



\bibitem{Dix}
J. Dixmier: {\it C*}-algebras, North-Holland Publishing Company, Amsterdam - New York . Oxford, 1977.


\bibitem{DR87}
S. Doplicher, J.E. Roberts: Duals of Compact Lie Groups Realized in the Cuntz Algebras and Their Actions on {\it C*}-algebras, {J. Funct. Anal.} {74} (1987), 96-120.


\bibitem{DR89}
S. Doplicher, J.E. Roberts: A New Duality Theory for Compact Groups, {Inventiones Mathematicae} {98} (1989), 157-218.


\bibitem{DR89A}
S. Doplicher, J.E. Roberts: Endomorphisms of \sC algebras, Cross Products and Duality for Compact Groups, {\it Ann. Math.} {\bf 130} (1989), 75-119.


\bibitem{DR90}
S. Doplicher and J.E. Roberts: Why there is a field algebra with a compact gauge group describing the superselection structure in particle physics, Commun. Math. Phys. 131 (1990), 51-107.







\bibitem{Fre}
K. Fredenhagen: 
Generalizations of the theory of superselection sectors. 
The algebraic theory of superselection sectors (Palermo, 1989), 379--387, World Sci. Publ., River Edge, NJ, 1990.


\bibitem{GLRV01}
D. Guido, R. Longo, J.E. Roberts, R. Verch: 
Charged sectors, spin and statistics in quantum field theory on curved spacetimes, 
Rev. Math. Phys. 13(2) (2001), 125-198. Available as arXiv:math-ph:9906019.

\bibitem{Haa}
R. Haag. Local Quantum Physics. Springer Texts and Monographs in Physics, 1996, 2nd edition.

\bibitem{H}
A. Hatcher: Algebraic Topology, Cambridge University Press, 2002.




\bibitem{HH}
P.N. Hoffman, J.F.Humphreys: Projective representations of the symmetric groups, Oxford Science publications, 1992.


\bibitem{Hus}
D. Husemoller: Fiber Bundles, Mc Graw-Hill Series in Mathematics, 1966.


\bibitem{Kob}
S. Kobayashi: Differential Geometry of complex vector bundles, Princeton University Press, 1987.

\bibitem{KN}
S. Kobayashi, K. Nomizu: Foundations of Differential Geometry, Vol.1,2, Interscience Publishers, New York - London - Sydney, 1969.

\bibitem{LR97}
R. Longo, J.E. Roberts: A Theory of Dimension, {$K$-Theory} {11} (1997), 103-159.

\bibitem{ML}
S. Mac Lane: Categories for the working mathematician, Springer Verlag, Berlin - Heidelberg - New York, 1998.



\bibitem{Rob}
J.E. Roberts: More Lectures on Algebraic Quantum Field Theory, in Noncommutative geometry: lectures given at the C.I.M.E. summer school held in Martina Franca, Italy, Lecture notes in mathematics 1831, Springer-Verlag.

\bibitem{RR06}
J.E. Roberts, G. Ruzzi: A cohomological description of connections and curvature over posets, Theory and Applications of Categories, 16 (2006) 855--895

\bibitem{RRV07}
J.E. Roberts, G. Ruzzi, E. Vasselli: A theory of bundles over posets, Adv. Math. 220 (2009) 125-153, arXiv:0707.0240.


\bibitem{Ruz05}
G. Ruzzi: Homotopy of posets, net-cohomology and superselection sectors in globally hyperbolic spacetimes, Rev.Math. Phys. 17(9) (2005) 1021-1070.


\bibitem{RV11}
G. Ruzzi, E. Vasselli: 
A new light on nets of {\it C*}-algebras and their representations, 
Comm.Math.Phys.312 (2012) 655-694, 
arXiv:1005.3178;

\noindent
Representations of nets of {\it C*}-algebras over S1, 
Comm.Math.Phys. 316, 127--149 (2012), 
arXiv:1104.3527.






\bibitem{Vas09}
E. Vasselli: 
Bundles of {\it C*}-categories, II: {\it C*}-dynamical systems and Dixmier-Douady invariants,  
J. Funct. Anal. 257 (2009) 357-387. 


\bibitem{VasQFT}
E. Vasselli:
Quantum fields in curved spacetimes and presheaves of superselection structures,
arxiv:1211.1812.


\bibitem{VasX}
E. Vasselli: A theory of gerbes over posets, in preparation. 


\end{thebibliography}
\end{document}